\documentclass{amsart}
\usepackage{graphicx}
 \usepackage{stmaryrd}
\usepackage{color}
\def\D{\nabla}

\def\a{\alpha}
\def\b{\beta}

\def\({\left (}
\def\){\right )}
\def\<{\left\langle}
\def\>{\right\rangle}

 \newtheorem{thm}{Theorem}[section]

\newtheorem{lem}[thm]{Lemma}

\newtheorem*{rem}{Remark}

\newcommand{\abs}[1]{\left\vert#1\right\vert}

\newcommand{\R}{\mathbb R}

\newcommand{\pfrac}[2]{\frac{\partial #1}{\partial #2}}

\begin{document}
\title{The Yang-Mills flow in   vector bundles over higher dimensional Riemannian manifolds}

\numberwithin{equation}{section}

\author{Jared Casey}
\address{Department of Mathematics, The University of Queensland, Brisbane,
QLD 4072, Australia}
\email{jared.casey@uq.edu.au }

\author{Chak Hoi Chan }
\address{Department of Mathematics, The University of Queensland, Brisbane,
QLD 4072, Australia}
\email{c.h.chan@uq.edu.au}

 \author{Min-Chun Hong}
\address{Department of Mathematics, The University of Queensland, Brisbane,
QLD 4072, Australia}
\email{hong@maths.uq.edu.au}

\begin{abstract}We develop a parabolic formulation of the Uhlenbeck gauge fixing theorem and use it to prove the maximal existence of the Yang-Mills flow in   vector bundles over compact $n$-dimensional Riemannian manifolds for $n \geq 4$. We apply    Moser's iteration technique \cite{Mo1} to give a new proof of  the bound of the curvature. Using an idea of Hamilton on the Ricci flow \cite{Ha}, we also provide a new proof  on uniform estimates of higher order derivatives. Furthermore, we investigate the blow-up of the Yang-Mills flow at the maximal existence time and improve an asymptotical result of Hong-Tian \cite{HT1}.
\end{abstract}

\makeatletter
	\@namedef{subjclassname@2020}{%
	\textup{2020} Mathematics Subject Classification}
	\makeatother
	\subjclass[2020]{	53C071, 58E15} \keywords{Yang-Mills Flow;  Gauge Fixing Theorem; Monotonicity Formula}

 \maketitle

 \pagestyle{myheadings} \markright {The Yang-Mills Flow over Higher Dimensional Riemannian Manifolds}

\section{Introduction}
Let $M$ be a compact $n$-dimensional Riemannian manifold without boundary for $n \geq 4$, and $E$ a vector bundle  over $M$ with structure group $G$, a compact Lie group. Associated with each vector bundle $E$ is an adjoint bundle $\text{Ad} \ E$ whose fiber $(\text{Ad} \ E)_x \cong \mathfrak{g}$ is the Lie algebra of $G$. From the adjoint bundle $\text{Ad} \ E$ we can construct $\Omega^p(\text{Ad} \ E) := \Gamma(\text{Ad} \ E \otimes \Omega^p(M))$, the space of  of locally $\mathfrak{g}$-valued $p$-forms. For a connection $D_A$ in $E$, the Yang-Mills functional is given by
 \begin{equation}\label{YM}\mbox{YM} (A; M) :=  \int_M |F_A|^2 \,dv, \end{equation}
where $F_A = D_A \circ D_A \in \Omega^2(\text{Ad} \ E)$ is the curvature  and  $D_A = D_{\text{ref}} + A$ for some smooth reference connection $D_{\text{ref}}$ and $A \in \Omega^1(\text{Ad} \ E)$.
A connection $D_A$ is called Yang-Mills if it is a solution of the Yang-Mills equation
 \begin{equation}\label{1.1}D_A^*F_A =0\,   \end{equation}
which is the Euler-Lagrange equation of the Yang-Mills functional (\ref{YM}). The Yang-Mills flow is   defined as the corresponding evolution equation of connections $A(t)$ given by
\begin{equation}\label{YMF} \frac {\partial A}{\partial t}=-D_A^*F_A  \end{equation}
with   an initial connection $A(0) = A_0 \in \Omega^1(Ad E)$. For further details, we refer the reader to the seminal works of Uhlenbeck \cite{U2} and Struwe \cite{St3}.


The Yang-Mills flow (\ref{YMF}) was first considered by Atiyah-Bott \cite {AB} in their study of the Morse theory of the Yang-Mills functional over Riemann surfaces. It was then applied by Donaldson \cite {Do1} to establish the Donaldson-Uhlenbeck-Yau Theorem, which asserts the existence of unique Hermitian-Einstein connections in stable irreducible holomorphic vector bundles over K\"ahler surfaces (see also \cite {Do2}). This theorem was then generalized by Simpson \cite {Si} to holomorphic bundles over non-compact K\"ahler
manifolds (see an extension in \cite{Ho1}).  In unstable holomorphic vector bundles over  compact K\"ahler manifolds,  Bando-Siu \cite {BS}  conjectured  an equivalent relation between the limiting bundle of the Yang-Mills flow and the Harder-Narashimhan filtration on K\"ahler manifolds. This conjecture was settled for K\"ahler surfaces by Daskalopous-Wentworth \cite {DW}-\cite {DW1}, and   for higher dimensional K\"ahler manifolds by  Sibley \cite{Sib}  and Li-Zhang-Zhang \cite{LZZ}.

In the case of non-K\"ahler manifolds, it  is of interest to  establish   existence of the Yang-Mills flow in vector bundles over $n$-dimensional Riemannian manifolds. In lower dimensions (i.e. $n=2$ or $3$), Råde \cite {Ra} established global existence of smooth solutions of the Yang-Mills flow in $M\times [0,\infty)$. For $n=4$,   Struwe \cite {St3} proved the
existence of weak solutions to the Yang-Mills flow, with initial connection $A_0\in H^{1}(M)$, that are gauge-equivalent to smooth solutions  in $M\times (0, T_1)$  for some maximal existence time $T_1>0$, which   is analogous to the one  for harmonic maps  in \cite{St1}. Using Uhlenbeck's broken Hodge gauge \cite{U1},  Hong-Schabrun \cite {HS}  established an energy identity  for the Yang-Mills flow at the maximal existence time $T_1$. For smooth initial connections,   Schlatter-Struwe-Tahvildar-Zadeh \cite {SST} proved  global existence of the
$\text{SO}(4)$-equivariant Yang-Mills flow on $\mathbb R^4$.  Hong-Tian
\cite {HT2} proved global existence of the $m$-equivariant Yang-Mills flow on $\mathbb R^4$. Using the Yang-Mills $\alpha$-flow, Hong-Tian-Yin \cite{HTY} proved global existence of weak solutions of the Yang-Mills flow on four manifolds. Waldron  \cite {W2} (see also \cite {W1}) then solved the problem on four manifolds and established global existence of smooth solutions of the Yang-Mills flow with smooth initial connection. However,
when $n\geq 5$,  the Yang-Mills flow  blows up in finite time as demonstrated by Naito \cite{N} (see also \cite{GS}).

In this paper, we investigate the Yang-Mills flow over compact $n$-dimensional Riemannian manifolds  for $n \geq 4$. Particularly, we will prove the maximal existence of the Yang-Mills flow with non-smooth initial connections $A_0\in W^{1,\frac{n}{2}}(M)$.  For an initial connection  $A_0 \in W^{1,\frac{n}{2}}(M)$ , there exists a sequence of smooth connections $A_{k}$ strongly converging to $A_0$ in $W^{1, {\frac{n}{2}}}(M)$ (see, e.g., \cite{U2} ). To establish existence with non-smooth initial connections, one expects that a sequence of smooth solutions of the Yang-Mills flow with initial connection  $A_{k}$ will converge to a limit weak solution. For this reason, we develop a parabolic formulation of the gauge fixing theorem of the Yang-Mills flow as follows.

\begin{thm}\label{Theorem 1.1}  For $n\geq 4$, let $D_A$ be a smooth   solution  of the Yang-Mills
flow (\ref{YMF}) in $\bar B_{r_0}(x_0)\times [0,t_1]$ with smooth initial connection $A_0$ for some  $t_1>0$, where $B_{r_0}(x_0)$ is an open ball in $M$ with centre at $x_0$   and radius $r_0>0$. Assume that there exists  a sufficiently small $\varepsilon >0$ such that
 \begin{equation}\label{small-condition}
 \sup_{0\leq t\leq t_1} \int_{B_{r_0}(x_0)} |F_A(x,t)|^{n/2}dv \leq \varepsilon.  \end{equation}
 Then, for any $t\in [0,t_1]$, there exists smooth gauge
transformations $S(t)\in G$  such that $D_A$ is gauge-equivalent to
$D_{a(t)}=S^*(D_{A(t)})$ satisfying
 \begin{equation}\label{US1}  d^*  a(t)=0 \quad \mbox {in } B_{r_0}(x_0),\quad a(t)\cdot \nu=0 \mbox { on }\partial  B_{r_0}(x_0),\end{equation}
\begin{equation}\label{US2}\int_{B_{r_0}(x_0)} \frac {|a(t)|^{n/2}}{r^{n/2}_0}+|\nabla a(t)|^{n/2}\,dv\leq C \int_{B_{r_0}(x_0)} |F_{a(t)}|^{n/2}\,dv. \end{equation}
    Furthermore,  $D_a=S^*(D_A)$ is a solution to the equivalent Yang-Mills flow
\begin{eqnarray}\label{good}
    \pfrac{a}{t}=-D_{a}^*F_{a}+D_{a} s\quad\mbox { in }B_{r_0}(x_0)\times [0,t_1]
    \end{eqnarray}
    for which
   \begin{equation}\label{NU}  \int_{0}^{t_1 }\int_{B_{r_0}(x_0)}  \frac 1{r^2_0}|s|^2+|D_{a} s|^2 +| \pfrac{a}{t}|^2 \,dv\,dt\leq C \int_{0}^{t_1 } \int_{B_{r_0}(x_0)} |\nabla_a
    F_a|^2\,dv\,dt  \end{equation}
    with $s(t)=S^{-1}(t)\circ \frac d{dt} S(t)$.

\end{thm}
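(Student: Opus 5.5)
The plan is to apply Uhlenbeck's gauge fixing theorem at each fixed time and then identify the time-dependent gauge term $s$ through an elliptic equation that comes from differentiating the Coulomb condition in time.

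\emph{Step 1: gauge at each time.} Fix $t\in[0,t_1]$. By (\ref{small-condition}), $A(t)$ is a smooth connection on $\bar B_{r_0}(x_0)$ with $\|F_{A(t)}\|_{L^{n/2}}\le\varepsilon$. Uhlenbeck's theorem (rescaled to radius $r_0$, with the boundary version on a ball) gives a gauge $S(t)$ with $a(t)=S^*(A(t))$ satisfying (\ref{US1}) and (\ref{US2}).

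\emph{Step 2: smooth dependence on $t$.} Uhlenbeck's gauge is obtained by the implicit function theorem together with a continuity method. Near a Coulomb connection $a$, solving $d^*(u^*(a+\text{perturbation}))=0$ is a nonlinear elliptic problem in $u$. Its linearization is $\xi\mapsto d^*D_a\xi=\Delta\xi+d^*[a,\xi]$ with Neumann condition $\partial_\nu\xi=0$ (coming from $a\cdot\nu=0$). This operator is invertible on mean-zero $\xi$ when $\|a\|_{W^{1,n/2}}$ is small, since by Sobolev $\|[a,\xi]\|$ is a small perturbation. Hence $S(t)$ can be chosen smoothly in $t$, first locally and then on all of $[0,t_1]$ by connectedness, using the uniqueness of small Coulomb gauges up to constants. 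Differentiating $D_a=S^{-1}D_AS$ in $t$ gives
$\partial_t a=S^{-1}(\partial_tA)S+D_a s$ with $s=S^{-1}\dot S$. Since $S^{-1}(D_A^*F_A)S=D_a^*F_a$, this is exactly (\ref{good}).

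\emph{Step 3: the estimate (\ref{NU}).} Apply $d^*$ to (\ref{good}) and use $d^*a=0$ for all $t$, so $d^*\partial_ta=0$. This gives the elliptic equation
\[ d^*D_a s=d^*D_a^*F_a\quad\text{in }B_{r_0},\]
with boundary condition $\partial_ta\cdot\nu=0$, i.e. $D_as\cdot\nu=D_a^*F_a\cdot\nu$ on $\partial B_{r_0}$. The gauge freedom lets us normalize $s$ to have mean zero.

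Test the equation against $s$. Write $d^*D_a^*F_a=[a\lrcorner D_a^*F_a]$-type terms, using $d^*d^*=0$ on forms so that $d^*(D_a^*F_a)=-*[a\wedge *F_a]$-type expressions involving $a$ and $\nabla_aF_a$ only. Simpler still, integrate by parts directly:
\[\int |D_as|^2=\int\langle D_as,D_a^*F_a\rangle+\text{(terms with }[a,s]),\]
where the boundary terms cancel because of the Neumann-type condition. Hence
\[\|D_as\|_{L^2}^2\le \|D_as\|_{L^2}\|\nabla_aF_a\|_{L^2}+\Big|\int\langle [a,s],D_as\rangle\Big|.\]
The last term is bounded by $\|a\|_{L^n}\|s\|_{L^{2n/(n-2)}}\|D_as\|_{L^2}\le C\varepsilon^{2/n}\|\nabla s\|^2$, using (\ref{US2}) and Sobolev. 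Poincaré for mean-zero $s$ together with $|ds|\le|D_as|+|[a,s]|$ absorbs it, giving
\[r_0^{-2}\|s\|_{L^2}^2+\|D_as\|_{L^2}^2\le C\|\nabla_aF_a\|_{L^2}^2.\]
Since $|D_a^*F_a|\le C|\nabla_aF_a|$, the bound on $\partial_ta$ follows from (\ref{good}). Integrating in $t$ gives (\ref{NU}).

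\emph{Main obstacle.} The main obstacle is Step 2, choosing $S(t)$ smoothly, or at least measurably and differentiably, in $t$ across all of $[0,t_1]$. The approach is to combine the implicit function theorem with uniqueness of the Coulomb gauge among small connections: two Coulomb gauges with small $W^{1,n/2}$ norm differ by a constant element of $G$. This lets the local smooth families be patched together after fixing $S$ by requiring $s$ to be mean-zero, which itself amounts to solving an ODE for the constant gauge.
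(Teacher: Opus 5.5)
Your argument is sound in outline. It builds the time-dependent gauge by a genuinely different route from the paper, while Step 3 rests on the same mechanism.

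\textbf{How the two routes differ.} The paper never produces a single smooth family. On each interval $[k\delta,(k+1)\delta]$ it sets $S=S(k\delta)e^{u_1}e^{u_2}$. Here $u_1$ vanishes on $\partial B_{r_0}(x_0)$ and is built from the radial heat equation of Lemma~\ref{LP}, precisely so that the transformed connection recovers $a_1\cdot\nu=0$; $u_2$ is Uhlenbeck's mean-zero Neumann gauge from Lemma~\ref{Uh1}. The pieces are concatenated, so $S$ is only piecewise smooth in $t$. Proving \eqref{NU} then needs separate Poincar\'e-type bounds for $s_1$ (zero boundary values) and $s_2$ (zero mean), plus cross terms such as $|\nabla S_2|\,|s_1|$.

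Your cancellation of boundary terms via $\partial_t a\cdot\nu=0$ is exactly the paper's identity $\int\langle ds,\partial_t a\rangle\,dv=0$. However, normalizing $\int s\,dv=0$ through the ODE for the constant gauge lets one Poincar\'e--Sobolev inequality control all of $s$. This is cleaner and gives $S(t)$ smooth on all of $[0,t_1]$.

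The price is twofold. First, you need the uniqueness lemma, which is easy: $g=S_1^{-1}S_2$ satisfies $dg=gb-ag$, $\partial_\nu g=0$ and $|d^*dg|\le|dg|(|a|+|b|)$, so testing against $g-\bar g$ gives $dg=0$ once $\|a\|_{L^n}+\|b\|_{L^n}$ is small. Second, you need an implicit function theorem that is not literally Uhlenbeck's Lemma 2.7. Also keep the second $[a,s]$-term $\int\langle D_a^*F_a,[a,s]\rangle\,dv$ produced by your integration by parts; it is harmless by the same H\"older--Sobolev bound.

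\textbf{The point you must make explicit.} The implicit function theorem in Step 2 is where you need to be precise, because it is exactly the difficulty the paper's $u_1$ is designed to remove. The perturbation $\lambda=S(t_*)^{-1}(A(t)-A(t_*))S(t_*)$ generally has $\lambda\cdot\nu\neq0$ on $\partial B_{r_0}(x_0)$.

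Suppose you only solve $d^*(u^*(a+\lambda))=0$ with homogeneous Neumann data $\partial_\nu u=0$, as in Lemma~\ref{Uh1}. Then the divergence theorem forces $\int_{\partial B_{r_0}}e^{-u}(\lambda\cdot\nu)e^{u}=0$, which fails in general for small $u$. Even if a solution existed, the resulting connection would violate $a\cdot\nu=0$.

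The fix is to put the nonlinear boundary equation $\nu\cdot u^*(a+\lambda)=0$ into the map. Its $u$-linearization at $(0,0)$ is $\xi\mapsto\bigl(d^*D_a\xi,\ \partial_\nu\xi|_{\partial B_{r_0}}\bigr)$; the term $[a\cdot\nu,\xi]$ drops because $a\cdot\nu=0$. Take $u$ in mean-zero $W^{2,p}$ with $p\in(n/2,n)$, so that $e^u$ is continuous and the map is smooth. For small $\|a\|_{L^n}$ this linearization is an isomorphism onto the pairs in $L^p\times W^{1-1/p,p}(\partial B_{r_0})$ satisfying the divergence-theorem compatibility condition. With this formulation your Step 2, and hence the whole argument, goes through.
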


We would like to remark that the equivalent Yang-Mills flow \eqref{good} with \eqref{US1} is parabolic although the Yang-Mills flow \eqref{YMF} itself is not.
For the proof  of Theorem \ref {Theorem 1.1}, we  modify a method of Uhlenbeck \cite{U2}.
Using Uhlenbeck's gauge fixing theorem,
  $D_A$ is gauge-equivalent to $D_{a}=S^{-1}\circ D_A \circ S$ via a gauge transformation $S\in G$
  satisfying  $d^*a=0$ in $B_{r_0}(x_0)$  with
   normal  boundary condition $a\cdot \nu =0$ on $\partial B_{r_0}(x_0)$. However, the gauge transformation
 $S$ has no proper boundary condition
   on $\partial B_{r_0}(x_0)$,  so we cannot
    prove (\ref{NU}) directly from \cite{U2}. To overcome this, we observe that
    the  gauge transformation $S$ in the proof of Uhlenbeck  \cite{U2} is a product of two gauge transformations $S_1=e^{u_1}$
    and $S_2=e^{u_2}$, for which $u_1$ satisfies the Dirichlet boundary condition $u_1=0$ on $\partial B_{r_0}(x_0)$ and $u_2$ satisfies
   the Neumann boundary condition $u_2\cdot\nu =0$ on $\partial B_{r_0}(x_0)$.
Using these two boundary conditions, we extend the result of Uhlenbeck \cite{U2} to prove Theorem \ref{Theorem 1.1}.

Due to Theorem \ref {Theorem 1.1}, we can generalize the result of Struwe \cite {St3} on the existence of weak solutions of the Yang-Mills flow in four dimensions to higher dimensions.

\begin{thm}\label{Theorem 1.2} For $n\geq 4$,  let $D_{A_0}$  be a  connection satisfying $A_0 \in W^{1,\frac{n}{2}}(M)$ and $F_{A_0}\in L^{n/2}(M)$.
   Then there exists a weak solution of the Yang-Mills flow (\ref{YMF}) with initial connection $A_0$ in $M\times [0, T_1)$, where $T_1$ is  the maximal existence time.  At $t = T_1$, there exists constants $\varepsilon_1>0$,    $R_0>0$ and a point $x_0\in M$ such that
\begin{equation}  \label{Cri}
\limsup_{t \to T_1}  \int_{B_R(x_0)} |F_A(x, t)|^{n/2}\,dv \geq \varepsilon_1
\end{equation}
 for any $R\in (0, R_0]$. Furthermore, for each $t\in  (0,T_1)$, the solution $A(t)$ is gauge-equivalent to a smooth solution of the Yang-Mills flow.
\end{thm}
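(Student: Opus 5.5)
We will prove Theorem \ref{Theorem 1.2} by approximation, following Struwe's four-dimensional scheme \cite{St3} with Theorem \ref{Theorem 1.1} supplying the gauge control needed in higher dimensions.

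\textbf{Step 1: approximate and solve.} Choose smooth connections $A_{0,k}\to A_0$ strongly in $W^{1,n/2}(M)$, so that $F_{A_{0,k}}\to F_{A_0}$ in $L^{n/2}(M)$. Since $M$ is compact, absolute continuity of the integral gives a radius $R_0>0$ with $\sup_{x\in M}\int_{B_{2R_0}(x)}|F_{A_{0,k}}|^{n/2}\,dv\le \varepsilon/2$ for all large $k$. The Yang--Mills flow with smooth initial data has a smooth local solution $A_k(t)$; we obtain it by solving the DeTurck-type parabolic system and pulling back by a gauge.

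\textbf{Step 2: a uniform existence time and the $L^{n/2}$ bound.} We show there is $T>0$, independent of $k$, with
\[
\sup_{0\le t\le T}\sup_{x\in M}\int_{B_{R_0}(x)}|F_{A_k}|^{n/2}\,dv\le\varepsilon .
\]
The tool is a local energy inequality for $|F|^{n/2}$. Using the Bochner formula
\[
(\partial_t-\Delta)|F|^2\le -2|\nabla F|^2+C|F|^3+C|F|^2
\]
and a cutoff $\phi$ supported in $B_{2R}$, we get
\[
\frac{d}{dt}\int\phi^{n}|F|^{n/2}+c\int\phi^n|F|^{\frac n2-2}|\nabla |F||^2\le C\int\phi^n|F|^{\frac n2+1}+CR^{-2}\int_{B_{2R}}|F|^{n/2}.
\]
The cubic term is absorbed by the Sobolev inequality applied to $|F|^{n/4}$, which is possible precisely while the $L^{n/2}$ norm on the ball is below $\varepsilon$. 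Integrating in time then shows the local $L^{n/2}$ energy can grow by at most $C T/R_0^2$, which yields a uniform $T$ by a continuity argument.

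\textbf{Step 3: higher derivative bounds.} Under this smallness we derive interior bounds
\[
|\nabla^j F_{A_k}|\le C(j,t)
\]
for $t>0$, by Moser iteration on the subsolution inequality for $|F|$ and Hamilton-type estimates for the higher derivatives. These bounds carry no gauge information; Theorem \ref{Theorem 1.1} supplies it. On each ball we pass to the Coulomb gauge $a_k=S_k^*(A_k)$. By \eqref{US2}, $a_k$ is bounded in $W^{1,n/2}$, and the curvature bounds plus ellipticity of $d^*a=0$, $da+a\wedge a=F$ bootstrap to $C^\infty$ bounds for $t>0$. By \eqref{NU}, $\partial_t a_k$ is bounded in $L^2$, and the energy identity
\[
\int\!\!\int|\nabla_a F_a|^2\le \mathrm{YM}(A_{0,k})
\]
(note $|D_A^*F_A|\le C|\nabla_AF_A|$) bounds the right side of \eqref{NU}. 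This gives compactness of $a_k$ in $C^\infty_{loc}(B\times(0,T])$ and in $C^0([0,T];L^2)$. The local limits are patched by Uhlenbeck's method: the transition gauges converge because they satisfy $dS=Sa-bS$. The result is a limiting connection on a bundle isomorphic to $E$, smooth for $t>0$ and gauge-equivalent to a smooth solution of \eqref{YMF}. The initial datum is attained because $A_k(t)-A_{0,k}$ is controlled in $L^2$ by $\sqrt t\,(\int\!\!\int|\partial_tA|^2)^{1/2}$, uniformly in $k$.

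\textbf{Step 4: maximality and \eqref{Cri}.} Restart the construction from $A(t)$ for $t$ close to $T$ and extend as long as possible; let $T_1$ be the supremum. Suppose \eqref{Cri} failed, so that for every $x$ some small radius keeps the energy below $\varepsilon_1$ as $t\to T_1$. The estimates of Steps 2--3, together with compactness of $M$, would give uniform bounds up to $T_1$. Restarting from $A(T_1-\delta)$ with a uniform existence time would then extend the flow past $T_1$, a contradiction. Weak-solution properties follow from passing to the limit in the integral form of \eqref{good}.

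\textbf{Main obstacle.} The hardest step is Step 3: turning the gauge-invariant curvature bounds into compactness of the connections themselves, uniformly down to $t=0$ where only $W^{1,n/2}$ regularity is available. This relies critically on \eqref{NU} and a careful patching of local gauges. I would first try to run the patching at each fixed $t>0$, then control its time continuity through $s=S^{-1}\partial_tS$, which is bounded by \eqref{NU}.
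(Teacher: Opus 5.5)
Your outline follows the paper's route, but two steps in Step~3 fail as written. The shared route is: smooth approximation, a uniform existence time from the local $L^{n/2}$ inequality and a covering argument (Lemma~\ref{Lemma 3}, Theorem~\ref{Theorem 4.2}), the bounds of Lemmas~\ref{Lemma 5}--\ref{Lemma 6}, the moving Coulomb gauges of Theorem~\ref{Theorem 1.1}, patching, and a restart argument for $T_1$.

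\textbf{First gap: the bound on the right-hand side of \eqref{NU}.} You use an inequality in the wrong direction. The energy identity \eqref{Energy identity} controls $\int_0^T\!\int_M|D_A^*F_A|^2=\int_0^T\!\int_M|\partial_tA|^2$. The pointwise bound $|D_A^*F_A|\le C|\nabla_AF_A|$ bounds this quantity by $\int\!\int|\nabla_AF_A|^2$, not conversely, so $\int\!\int|\nabla_aF_a|^2\le\mathrm{YM}(A_{0,k})$ does not follow. What you need is Lemma~\ref{Lemma 2}. By the Bianchi identity and the Weitzenb\"ock formula, $\int_M|\nabla_AF_A|^2\le\int_M|D_A^*F_A|^2+C\int_M(|F_A|^3+|F_A|^2)$. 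The cubic term is absorbed by H\"older and Sobolev using the smallness of $\int_{B_{R_0}}|F_A|^{n/2}$, exactly as in your Step~2. This gives $\int_0^T\!\int_M|\nabla_AF_A|^2\le C(1+T/R_0^2)\,\mathrm{YM}(A_0)$. Alternatively, the estimates in the proof of Theorem~\ref{Theorem 1.1} actually pass through $\int_U|\partial_tA|^2$, which the energy identity controls directly.

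\textbf{Second gap: smoothness and the final gauge-equivalence claim.} \eqref{NU} only bounds $s_k$ in $L^2_tH^1_x$ and $\partial_ta_k$ in $L^2$. Combined with the elliptic bootstrap on each time slice, this gives smooth convergence at each fixed $t>0$ and continuity in $t$. It does not give compactness in $C^\infty_{loc}(B\times(0,T])$. Nor does it let you undo the moving gauge: the gauge $\sigma$ with $\partial_t\sigma=-s\sigma$ that turns a solution of \eqref{good} into a solution of \eqref{YMF} is only as regular in $x$ as $s$ is. So the last assertion of the theorem, that $A(t)$ is gauge-equivalent to a smooth Yang--Mills flow, is not justified by your Step~3. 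Controlling time continuity through $s$ with only the bound from \eqref{NU} is not enough.

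The paper offers two remedies.
\begin{enumerate}
\item Apply $d^*$ to \eqref{good} and use $d^*\partial_ta=0$ and $D_a^*D_a^*F_a=0$. This gives the elliptic equation \eqref{as}, $d^*ds=a\#D_a^*F_a+a\#\nabla s$. Its interior estimates, together with the $L^2$ bound from \eqref{NU}, control every $\nabla^m s$ by the $\nabla_a^jF_a$.
\item More simply, fix the Coulomb gauge only at a single time $t_0>0$. Since \eqref{YMF} is invariant under time-independent gauge transformations, $\partial_t\nabla^j\tilde A^k=-\nabla^j(D^*_{\tilde A^k}F_{\tilde A^k})$ can be controlled inductively via Lemma~\ref{Lemma 6}. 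Then $\tilde A^k$ converges smoothly on $[t_0,T]$ to a smooth solution of \eqref{YMF}, and the patching uses time-independent transition functions.
\end{enumerate}
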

To prove Theorem \ref{Theorem 1.2}, we use Theorem \ref{Theorem 1.1}  to establish a compactness theorem on the Yang-Mills flows (see below Theorem \ref{Theorem 4.1}). From this compactness theorem,   we can demonstrate local existence of the Yang-Mills flow with initial connection $A_0\in W^{1,\frac{n}{2}}(M)$ with curvature $F_{A_0}\in L^{n/2}(M)$. The criterion \eqref{Cri} of the maximal existence time follows from two new derivations. Firstly,
 assuming that for  each $T<T_1$,    there is some constant  $R_0>0$ such that
 \begin{equation}  \label{Re1}
 \sup_{0\leq t\leq T} \max_{x_0\in M} \int_{B_{R_0}(x_0)} |F_A(\cdot ,t)|^{n/2}dv < \varepsilon_1 ,
 \end{equation}
     we apply the idea from $n$-harmonic maps of Hungerb\"uhler \cite{Hu} (see also \cite{Ho2}) to modify Moser's iterations and  prove that the curvature $F_A$ is bounded in $M\times [0,T]$, without using Schoen's $\varepsilon$-regularity theorem \cite{Schoen}. Secondly, using the local uniform bound on the curvature $F_A$, we use the idea of Hamilton \cite{Ha} on the Ricci flow to obtain   uniform  estimates on $\nabla^k F_A$  for $k\geq 1$. We note that this proof  is different from the one in Hong-Tian \cite{HT1} that used the Bochner formula of $\nabla^k_A F_A$ from Donaldson \cite{Do1}.

\begin{rem}    When $n=4$, Struwe \cite {St2} established   uniqueness of weak solutions of the Yang-Mills flow under an extra  condition that $A_0$ is irreducible. Since then,  it has been  a challenging   problem to prove the uniqueness of  weak solutions of the Yang-Mills flow in four manifolds with   initial data  $A_0\in W^{1,2}(M)$ (see, e.g., \cite {W1}).   In a separate paper \cite{CH},    Chan and Hong proved that the weak solutions   are unique for $n=4$.
For $n\geq5$, Gastel \cite{Ga} showed non-uniqueness of weak solutions to  the Yang–Mills flow.
	 \end{rem}

  Furthermore, we investigate the blow-up of the Yang-Mills flow at the singular time $t=T_1$ as following:

  \begin{thm}  \label{Theorem1.3'}
    Let $A(t)$ be a smooth solution of the Yang-Mills flow with initial connection $A_0$ in $M \times [0,T_1)$, where $T_1$ is the maximal existence time in \eqref{Cri}. Then there exist sequences $R_k \rightarrow 0$, $t_k \rightarrow T_1$ and points $x_k \to x_0$ such that as $k \rightarrow \infty$,
\begin{align}
A_k(x,t)=R_k A(x_k+R_k x, t_k +R_k^2 t) \rightarrow  \bar {A}(x,t),
\end{align}
in $C^{\infty}_{loc}(\Bbb R^n\times (-\infty, 0])$, where $\bar {A}(x,t)$ is a nontrivial smooth solution of the Yang-Mills flow in $\Bbb R^n\times (-\infty, 0]$ satisfying
\begin{equation}
     \frac{\partial  \bar  A  }{\partial   t}-\frac{  x_\alpha }{2| t|} F^{\alpha\beta}_{ \bar A }d x_{\beta}=0.
\end{equation}
 \end{thm}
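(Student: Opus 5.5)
Since $T_1$ is the maximal existence time, the criterion \eqref{Cri} of Theorem \ref{Theorem 1.2} gives a concentration point $x_0$ with
\[
\limsup_{t\to T_1}\int_{B_R(x_0)}|F_A|^{n/2}\,dv\ge\varepsilon_1 \quad\text{for all } R\le R_0 .
\]
The plan is to combine this with Hamilton's monotonicity formula for the Yang-Mills flow (the Chen-Shen/Hong-Tian version). The argument has three steps: choose good base points and scales, show the rescaled flows have bounded geometry and converge, and show the limit is self-similar.

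\textbf{Choice of points and scales.} For $t<T_1$ near $T_1$, I would set
\[
\Phi(x,t,R)=R^{4-n}\int_{t-R^2}^{t}\int_{B_R(x)}|F_A|^2\,dv\,ds ,
\]
or use the Gaussian-weighted quantity from the monotonicity formula. I take $R_k\to0$ to be the smallest scale at which $\sup_x\Phi(x,t_k,R)$ reaches a fixed small threshold $\delta$, with $x_k$ the maximizing point. Because of the concentration at $x_0$, we have $x_k\to x_0$.

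A cleaner route may be the standard point-picking on curvature. Put $\Lambda_k=\max_{M\times[0,t_k]}|F_A|$, which tends to $\infty$ since the flow cannot be extended (by the curvature bound argument behind \eqref{Re1}). Choose $(x_k,t_k)$ attaining $\Lambda_k$ and set $R_k=\Lambda_k^{-1/2}$. The rescalings $A_k(x,t)=R_kA(x_k+R_kx,t_k+R_k^2t)$, taken in normal coordinates, are again Yang-Mills flows. They satisfy $|F_{A_k}|\le1$ on $B_{R_k^{-1}r}\times[-t_kR_k^{-2},0]$ and $|F_{A_k}(0,0)|=1$. The base points should be taken so that $t_k\to T_1$ with $(T_1-t_k)/R_k^2$ controlled.

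\textbf{Convergence.} With curvature bounded, the Hamilton-type estimates on $\nabla^kF_A$ (proved later in the paper) give uniform local bounds on all derivatives of $F_{A_k}$. Theorem \ref{Theorem 1.1}, applied on unit balls where the $L^{n/2}$ norm of curvature is small (which holds because $|F|\le1$ on small balls), gives Coulomb gauges in which the gauge-fixed flow \eqref{good} is parabolic. A patching argument as in Uhlenbeck then yields a subsequence converging in $C^\infty_{loc}(\R^n\times(-\infty,0])$, modulo gauge, to a smooth Yang-Mills flow $\bar A$. The limit is nontrivial because $|F_{\bar A}(0,0)|=1$.

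\textbf{Self-similarity.} Use the monotonicity formula with backward heat kernel $G_{(x_0,T_1)}$:
\[
\frac{d}{dt}\Big[(T_1-t)^{2}\int|F_A|^2G\,dv\Big]\le -2(T_1-t)^2\int\Big|D_A^*F_A-\frac{x_\alpha}{2(T_1-t)}F^{\alpha\beta}_A dx_\beta\Big|^2G\,dv+C .
\]
This shows the weighted energy $\Psi(t)$ has a limit as $t\to T_1$. Therefore the integral of the defect over $[t_k-\tau R_k^2,\,t_k]$ tends to zero. For this to pass to the rescaled flows centered at the limiting time, the rescaling must be a \emph{type I} blow-up centered at $(x_0,T_1)$: take $R_k=\sqrt{T_1-t_k}$ and base the limit time at $0$ corresponding to $T_1$. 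By the scale invariance of $\Psi$ and the vanishing of the defect, the limit then satisfies
\[
\partial_t\bar A=-D^*F_{\bar A}=\frac{x_\alpha}{2|t|}F^{\alpha\beta}_{\bar A}dx_\beta
\]
(up to the sign convention of the statement). Nontriviality would now come from $\Psi$ being bounded below by a multiple of $\varepsilon_1$ through \eqref{Cri} and $\varepsilon$-regularity.

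\textbf{Main obstacle.} The two requirements pull against each other. Curvature point-picking gives smooth convergence but not self-similarity. The type I scaling gives self-similarity but not bounded curvature. I would first try to choose $t_k$ with $\int_{t_k-R_k^2}^{t_k}$ of the defect tending to zero, and then pick $x_k$ and the scale $R_k$ by the $\varepsilon$-regularity threshold on $\Phi$. This gives local $L^{n/2}$-smallness on unit balls after rescaling, hence curvature bounds by Moser iteration and convergence. Self-similarity then follows along this subsequence, with the time axis normalized so that the concentration time maps to $0$.
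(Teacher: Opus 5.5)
\textbf{There is a genuine gap, and it sits exactly where your ``main obstacle'' paragraph says it does.} You never produce a single sequence along which the rescalings both converge smoothly and satisfy the soliton equation, and the hybrid plan in your last paragraph does not supply one.

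In that plan the Gaussian is centred at $(x_0,T_1)$, while the blow-up is normalised by a point-picked triple $(x_k,t_k,R_k)$. Fixing $t_k$ by smallness of the defect on $[t_k-R_k^2,t_k]$ before $R_k$ has been chosen is also circular. After rescaling, the vanishing defect gives a soliton equation about the image $\big((x_0-x_k)/R_k,\,(T_1-t_k)/R_k^2\big)$ of $(x_0,T_1)$, not about the origin. Either this point escapes to infinity and the equation degenerates, or it stays at a point where your normalisation is supposed to give smooth convergence.

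The second case is self-defeating for a structural reason. Suppose $\bar A$ solves the flow on $\R^n\times(-\infty,0)$ and $\partial_t\bar A=\frac{x_\alpha}{2|t|}F^{\alpha\beta}_{\bar A}dx_\beta$. Then $\frac{d}{d\lambda}\big|_{\lambda=1}\lambda\bar A(\lambda x,\lambda^2t)=D_{\bar A}(x_\alpha\bar A_\alpha)$, which is an infinitesimal gauge transformation. Hence $|F_{\bar A}(x,t)|=\lambda^2|F_{\bar A}(\lambda x,\lambda^2 t)|$, that is, $|F_{\bar A}(y,-1)|=s\,|F_{\bar A}(\sqrt s\,y,-s)|$ for every $s>0$. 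If $F_{\bar A}$ is bounded near $(0,0)$, letting $s\to0$ forces $F_{\bar A}\equiv0$.

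So no choice of sequences can produce a nontrivial limit that converges in $C^\infty_{loc}(\R^n\times(-\infty,0])$ and is self-similar about the origin. Two weaker statements are provable:
\begin{itemize}
\item your type I alternative: rescale about $(x_0,T_1)$ with $R_k=\sqrt{T_1-t_k}$, assume $(T_1-t)|F_A|\le C$, and obtain convergence only on $\R^n\times(-\infty,0)$;
\item a nontrivial ancient limit obtained by point-picking, without self-similarity.
\end{itemize}

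\textbf{Comparison with the paper.} The paper essentially takes your first route.
\begin{itemize}
\item It chooses $(x_k,t_k,R_k)$ with $\int_{B_{R_k}(x_k)}|F_A(\cdot,t_k)|^{n/2}\in[\varepsilon_1-\frac1k,\varepsilon_1)$, maximal over all balls of radius $R_k$ and all times in $[0,t_k]$.
\item It gets smooth convergence from Lemmas \ref{Lemma 5} and \ref{Lemma 6}.
\item It applies \eqref{GM1} centred at the moving points $z_k=(x_k,t_k)$ between the radii $R_k$ and $LR_k$. It asserts that $\Phi_{z_k}(R):=R^2\int_{T_R(z_k)}|F_A|^2\phi^2G_{z_k}$ has the same limit at $R=LR_k$ and at $R=R_k$, so that the defect in \eqref{GM2} tends to zero.
\end{itemize}

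That assertion does not follow from \eqref{GM1}, which only gives $\Phi_{z_k}(R_k)\le e^{C(L-1)R_k}\Phi_{z_k}(LR_k)+o(1)$. The reverse bound holds for a fixed centre, where $\lim_{R\to0}\Phi_z(R)$ exists. It does not hold for centres $z_k$ and radii $R_k$ produced by point-picking, which is precisely the obstruction you identified.

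Moreover, the conclusion drawn from that step cannot hold. Lemma \ref{Lemma 5} applies up to rescaled time $0$, so the paper's limit is nontrivial with curvature bounded up to $(0,0)$, and the computation above then rules out the soliton equation. So you are not missing an idea that the paper supplies: the gap in your proposal is the same as the unjustified step in the paper's proof, and it cannot be closed for the statement as formulated.
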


We can show the global existence of YM flow:
\begin{thm}\label{Theorem 1.3}

   Assume that there exists a sufficiently small constant $\varepsilon_3 >0$ such that
\begin{equation}  \label{GB1}
\int_{M} |F_{A_0}|^{\frac n 2}\,dv < \varepsilon_3.  \end{equation}
Then, there exists a global
  smooth solution of the Yang-Mills flow (\ref{YMF}) with initial value $A_0$ in $M\times [0, \infty)$.
  \end{thm}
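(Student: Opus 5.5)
The plan is to combine Theorem \ref{Theorem 1.2} with the criterion \eqref{Cri}. Two facts are needed: the global $L^{n/2}$ norm of the curvature stays small along the flow, and smallness of that norm rules out the concentration in \eqref{Cri}. By Theorem \ref{Theorem 1.2} (or standard short-time existence if $A_0$ is smooth), there is a solution on $M\times[0,T_1)$ that is gauge-equivalent to a smooth one for $t>0$. It therefore suffices to show $\sup_{t<T_1}\int_M |F_A(t)|^{n/2}\,dv<\varepsilon_1$. Once this holds, \eqref{Cri} cannot occur at any point $x_0$ and any radius, so $T_1=\infty$.

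\textbf{Key step: an evolution inequality for $\int_M|F_A|^{n/2}$.} Along the Yang-Mills flow the curvature satisfies $\partial_t F_A=-D_AD_A^*F_A$. Using the Bianchi identity $D_AF_A=0$, this becomes $\partial_t F_A=-\Delta_A F_A$, with $\Delta_A$ the Hodge Laplacian. The Bochner--Weitzenb\"ock formula gives $\Delta_A F=\nabla_A^*\nabla_A F+\mathrm{Rm}\# F+F\# F$. Hence
\[
\partial_t|F|^2+\Delta|F|^2+2|\nabla_A F|^2\le C|F|^3+C|F|^2 ,
\]
where $\Delta=\nabla^*\nabla\ge 0$ is the Laplacian on functions. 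Kato's inequality gives $|\nabla_A F|\ge|\nabla|F||$. Multiplying by $|F|^{n/2-2}$ and integrating, with $p=n/2$ and $w=|F|^{p/2}$, I would obtain
\[
\frac{d}{dt}\int_M|F|^p+c\int_M|\nabla w|^2\le C\int_M|F|\,w^2+C\int_M w^2 .
\]
H\"older and Sobolev on the compact manifold give
\[
\int_M |F|w^2\le \|F\|_{L^{n/2}}\|w\|_{L^{2n/(n-2)}}^2\le C_S\|F\|_{L^{n/2}}\big(\|\nabla w\|_{L^2}^2+\|w\|_{L^2}^2\big).
\]
While $\|F\|_{L^{n/2}}\le 2\varepsilon_3^{2/n}$ is small, this term is absorbed, and what remains is $\frac{d}{dt}\int|F|^{n/2}\le C\int|F|^{n/2}$. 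A continuity argument then keeps the norm small on a time interval, but only one of length $O(1)$.

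\textbf{Main obstacle: the linear term.} The term $C\int w^2$ comes from the Riemann curvature of $M$ and allows exponential growth, so the estimate is not uniform in time. I would try two ideas. First, the energy $\int|F|^2$ is nonincreasing, and one can interpolate against it. Second, and more robustly, use a parabolic $\varepsilon$-regularity argument via the Moser iteration of the paper. Smallness of $\sup_t\int_{B_{R_0}}|F|^{n/2}$ gives a pointwise bound $|F|\le C R_0^{-2}\varepsilon^{2/n}$ on $[T,T+1]$ for any $T$, so the curvature is uniformly bounded. Coercivity $\int|\nabla w|^2\ge \lambda\int w^2$ fails without a spectral gap (e.g.\ for flat connections), so I would instead use the local estimate on balls of radius $R_0$ with cutoffs. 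On a small ball the Poincar\'e--Sobolev constant beats the curvature of $M$ once $R_0$ is small. Gluing the local estimates and using monotonicity of the global energy to control the spillover terms $R_0^{-2}\int|F|^{n/2}$, which are bounded on intervals of length $\sim R_0^2$, should give a uniform-in-time bound on $\sup_{x}\int_{B_{R_0}(x)}|F|^{n/2}$ by induction on time steps.

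If this bound degrades, the fallback is to iterate Theorem \ref{Theorem 1.2} on successive intervals. By \eqref{Cri}, the flow can be continued as long as the local $L^{n/2}$ concentration stays below $\varepsilon_1$, and the local bound must be propagated step by step.
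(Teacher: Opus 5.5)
The step the whole argument depends on, a curvature bound that is uniform in time, is never actually proved. You correctly note that your differential inequality keeps $\int_M|F_A|^{n/2}$ small only for $t=O(1)$, but the route you then develop does not close.

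\begin{itemize}
\item The pointwise bound from Lemma~\ref{Lemma 5} requires $\sup_t\int_{B_{2R_0}}|F_A|^{n/2}\le\varepsilon$ on the very time interval in question. On its own it is therefore circular.
\item Iterating Lemma~\ref{Lemma 3} over time steps of length $\sim R_0^2$ loses a fixed factor at every step, because the right-hand side is taken over $B_{2R_0}$ and so brings in the covering multiplicity. This gives exponential growth, exactly as in the proof of Theorem~\ref{Theorem 4.2}.
\item A Poincar\'e inequality on small balls does not compensate. The cutoff terms $\int w^2|\nabla\phi|^2\sim R_0^{-2}\int_{B_{2R_0}}w^2$ are of the same order as the Poincar\'e gain, on a larger ball.
\item The $L^2$ energy cannot control the spillover $R_0^{-2}\int|F_A|^{n/2}$ for $n>4$ unless you already have a pointwise bound.
\item The fallback of restarting Theorem~\ref{Theorem 1.2} on successive intervals simply restates the problem.
\end{itemize}

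The missing idea is to turn the nonincreasing energy into time-uniform control. By H\"older, $\mathrm{YM}(A_0)\le|M|^{1-4/n}\varepsilon_3^{4/n}$, so this energy is small. The paper uses it at a fixed scale. With $R_0=\min\{i(M),\varepsilon_0\}$, Lemma~\ref{Lemma 1} gives $R_0^{2-n}\int_{P_{R_0}(x_0,t_0)}|F_A|^2\le R_0^{4-n}\mathrm{YM}(A_0)<\varepsilon_0$ at every $(x_0,t_0)$. The monotonicity-based $\varepsilon$-regularity Theorem~\ref{Theorem 6} then yields $|F_A|\le C(\delta R_0)^{-2}$ at all later times. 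Lemma~\ref{Lemma 6} bounds all derivatives, so $T_1$ cannot be finite.

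Both of your ideas can also be completed, but only through this step.

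\emph{Locally.} Inside a continuity argument, Lemma~\ref{Lemma 5} gives $|F_A|\le CR_0^{-2}$, using $\int_{P_{2R_0}}|F_A|^2\le 4R_0^2\,\mathrm{YM}(A_0)$. Then $\int_{B_{2R_0}}|F_A|^{n/2}\le\|F_A\|_{L^\infty}^{n/2-2}\int_{B_{2R_0}}|F_A|^2\le CR_0^{4-n}\mathrm{YM}(A_0)$, which is strictly below the threshold.

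\emph{Globally.} For $n>4$ one has $8/n<2<2n/(n-2)$. H\"older interpolation plus Sobolev then give $\int_M w^2\le\eta\int_M|\nabla w|^2+C_\eta\bigl(\int_M|F_A|^2\bigr)^{n/4}$. This both absorbs the linear curvature term and supplies damping. So while $\|F_A\|_{L^{n/2}}$ stays small, $\frac{d}{dt}\int_M|F_A|^{n/2}+\kappa\int_M|F_A|^{n/2}\le C\,\mathrm{YM}(A_0)^{n/4}\le C\varepsilon_3$, which closes the bootstrap uniformly in time. The lack of coercivity you point out is exactly what this energy term repairs.
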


To prove this, we need a local   monotonicity formula for the
Yang-Mills flow and an improved $\varepsilon$-regularity theorem. Although the results were outlined by Hong-Tian \cite {HT1},
we provide detailed proofs of  a local   monotonicity formula for the
Yang-Mills flow and obtain an improved $\varepsilon$-regularity theorem, that
if there exists a constant $\varepsilon_0 > 0$ such
that
\begin{equation}  \label{EP}
R^{2-n}\int_{P_R(x_0, t_0)}|F_A|^2\,dv\,dt
    \leq \varepsilon_0\end{equation}
     for  some small $R \ > 0$,
  then we have
$$\sup_{P_{\delta R}(x_0,t_0)} |F_A|^2 \leq C(\delta R)^{-4}\,,$$
where $0<\delta<1$.  We then apply this $\varepsilon$-regularity theorem to prove Theorem \ref{Theorem 1.3}.

Finally, we improve a result of Hong-Tian \cite{HT1}, that was used to settle the Bando-Siu conjecture (see \cite{Sib} and \cite{LZZ}) over compact K\"ahler manifolds, and for the case of Riemannian manifolds only applied to a particular sequence of $t_i\to\infty$.

  \begin{thm}\label{Theorem 1.4}  Let $A(x,t)$ be a global smooth solution of
the Yang-Mills flow  (\ref{YMF})  in $M\times [0,\infty )$. Then for any sequence $t_i\to\infty$, there exists a subsequence $\{t_{i_k}\}$   such
that, as $t_{i_k}\to\infty$, $A(x, t_{i_k})$ converges, modulo gauge
transformations, to a Yang-Mills connection $A(x)$ in  smooth
topology outside  a closed set $\Sigma \{A(t_{i_k})\}$ given  by
\begin{equation}  \label{Conc}
\Sigma \{A(t_{i_k})\}=\bigcap_{R\leq R_0} \left\{x\in M:\quad
\liminf_{k\to\infty}\, R^{4-n}\int_{B_R(x)}   |F_A|^2  (\cdot
,t_{i_k})|^2\,dv\geq \varepsilon_3\right \}\,\end{equation}
 for some $\varepsilon_3 >0$
 and $R_0>0$. Furthermore,  $\Sigma \{A(t_{i_k})\}$ is $(n-4)$-rectifiable.
   \end{thm}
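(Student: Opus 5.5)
We use the Yang-Mills energy identity along the flow, the monotonicity formula and the improved $\varepsilon$-regularity theorem above (condition \eqref{EP}), together with Uhlenbeck's compactness theorem for connections with $L^\infty$-bounded curvature.

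\textbf{Step 1: a subsequence with $D_A^*F_A \to 0$.} Since
$$\frac{d}{dt}\,\mathrm{YM}(A(t))=-2\int_M |D_A^*F_A|^2\,dv,$$
we have $\int_0^\infty\int_M |D_A^*F_A|^2\,dv\,dt \le \mathrm{YM}(A_0)<\infty$. Hence for each $t_i$ the integral over $[t_i-1,t_i+1]$ tends to zero. This is stronger than choosing a good sequence, and it is what lets us treat an \emph{arbitrary} sequence $t_i$. We rescale time by setting $A_i(x,t)=A(x,t_i+t)$ for $t\in[-1,1]$. Each $A_i$ is a solution of the flow with uniformly bounded energy, and $\|\partial_t A_i\|_{L^2(M\times[-1,1])}\to0$.

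\textbf{Step 2: uniform estimates away from concentration.} Pass to a subsequence so that the measures $|F_A|^2(\cdot,t_{i})\,dv$ converge weakly to a Radon measure $\mu$. Define $\Sigma$ by \eqref{Conc}; it is closed. Fix $x\notin\Sigma$. Then for some $R$ and infinitely many $k$ we have $R^{4-n}\int_{B_R(x)}|F_A|^2(\cdot,t_{i_k})\,dv<\varepsilon_3$. Use the local monotonicity formula, with its error terms controlled by the total energy and by $\int|\partial_tA|^2$ on $[t_{i_k}-1,t_{i_k}]$. This bounds the parabolic quantity $R'^{2-n}\int_{P_{R'}(x,t_{i_k})}|F_A|^2$ by $C\varepsilon_3$ plus a small term, for some $R'\le R$ depending only on $R$.

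The \textbf{main obstacle} is this transfer from time-slice smallness to parabolic smallness. It requires the time-derivative terms of the monotonicity inequality to be absorbed using Step 1, and it requires the backward heat kernel weight to be handled with a cutoff. I would follow the Hong-Tian scheme, choosing $\varepsilon_3$ much smaller than $\varepsilon_0$.

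The $\varepsilon$-regularity theorem then gives $|F_A|\le C$ on $B_{\delta R'}(x)\times[t_{i_k}-\delta^2R'^2,t_{i_k}]$. The Hamilton-type derivative estimates then bound $\nabla_A^l F_A$ for every $l$. To get a full subsequence rather than "infinitely many $k$" at each point, cover $M\setminus\Sigma$ by countably many balls and take a diagonal subsequence. The $\liminf$ in \eqref{Conc} makes this consistent.

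\textbf{Step 3: convergence and the Yang-Mills limit.} On each compact $K\subset M\setminus\Sigma$ we now have uniform $C^l$ bounds on the curvature. Apply Uhlenbeck's gauge fixing on small balls, as in Theorem \ref{Theorem 1.1} at a single time, and patch the local gauges. This gives gauge transformations $S_k$ with $S_k^*A(t_{i_k})$ converging in $C^\infty_{loc}(M\setminus\Sigma)$ to a connection $A_\infty$. Since $D_A^*F_A$ is smooth in the interior and its $L^2$ norm over the time slab tends to zero, higher-order parabolic estimates give $D_A^*F_A(t_{i_k})\to0$ locally smoothly. Hence $D_{A_\infty}^*F_{A_\infty}=0$.

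\textbf{Step 4: structure of $\Sigma$.} The density of $\mu$ satisfies $\Theta^{n-4}(\mu,x)\ge\varepsilon_3$ on $\Sigma$. The limit is stationary: the stress-energy tensor of $A(t_{i_k})$ has divergence controlled by $|D_A^*F_A||F_A|$, which tends to zero in $L^1$. A monotonicity formula then holds for $\mu$. Hence $\mathcal H^{n-4}(\Sigma)<\infty$ and $\mu=|F_{A_\infty}|^2dv+\nu$, where $\nu=\Theta\,\mathcal H^{n-4}\llcorner\Sigma$. This makes $\nu$ a stationary $(n-4)$-varifold-type measure with positive lower density. Preiss's theorem, in the form used by Tian for Yang-Mills connections, gives $(n-4)$-rectifiability of $\Sigma$.
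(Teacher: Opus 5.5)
Steps 2 and 3 match the paper's outline, and your argument that $D_A^*F_A(\cdot,t_{i_k})\to0$ smoothly away from $\Sigma$ is more careful than the paper's. Step 4, however, has a genuine gap, and it sits exactly where an arbitrary sequence is harder than a well-chosen one. You assert that $|D_A^*F_A||F_A|(\cdot,t_{i_k})\to0$ in $L^1(M)$. Step 1 gives only the slab estimate $\int_{t_i-1}^{t_i+1}\int_M|D_A^*F_A|^2\,dv\,dt\to0$. Since $t\mapsto\|D_A^*F_A(\cdot,t)\|^2_{L^2(M)}$ is merely integrable on $[0,\infty)$, nothing forces it to vanish along an arbitrary sequence $t_{i_k}$. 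Near $\Sigma$ you also have no pointwise bounds to pass from the slab to the slice. So the slab smallness is not ``stronger than choosing a good sequence''; the two are complementary. Without near-stationarity of the slices you do not get monotonicity of $e^{ar^2}r^{4-n}\mu(B_r(x))$. You therefore lose the existence of $\Theta(\mu,x)$ with $\Theta\ge\varepsilon_3$ on $\Sigma$, the bound on $\mathcal H^{n-4}(\Sigma)$, and rectifiability.

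The missing idea, which is the core of the paper's proof, combines both pieces. By the mean value theorem, choose $\tilde t_k\in[t_k-1,t_k]$ with $\int_M|\partial_tA|^2(\cdot,\tilde t_k)\,dv\to0$. The local energy inequality \eqref{Loc1} then shows that $|F_A|^2(\cdot,t_k)\,dv$ and $|F_A|^2(\cdot,\tilde t_k)\,dv$ have the same weak limit $\mu$ and define the same $\Sigma$. Monotonicity of $\mu$ is derived from the slices $A(\tilde t_k)$, for which $\int_M|D_A^*F_A||F_A|\le\|\partial_tA(\cdot,\tilde t_k)\|_{L^2}\,\mathrm{YM}(A_0)^{1/2}\to0$. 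Averaging the approximate monotonicity inequality over $t\in[t_k-1,t_k]$ and then using \eqref{Loc1} would also work.

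The same inequality \eqref{Loc1}, not the monotonicity formula, is what resolves your ``main obstacle'' in Step 2. The monotonicity formula only compares parabolic quantities at different scales, so it cannot by itself turn smallness on the single slice $t_{i_k}$ into parabolic smallness. By contrast, \eqref{Loc1} gives directly $r^{2-n}\int_{P_{r/2}(x,t_k)}|F_A|^2\le r^{4-n}\int_{B_r(x)}|F_A|^2(\cdot,t_k)+o(1)$; the monotonicity formula enters only inside Theorem~\ref{Theorem 6}.

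Finally, you do not need a diagonal subsequence, and with a $\liminf$ condition it would be delicate, since passing to a subsequence can only raise the $\liminf$. Once $|F_A|^2(\cdot,t_{i_k})\,dv\rightharpoonup\mu$, the condition $\liminf_k R^{4-n}\int_{B_R(x)}|F_A|^2(\cdot,t_{i_k})<\varepsilon_3$ gives $R^{4-n}\mu(B_R(x))<\varepsilon_3$. This yields smallness for all large $k$ on a slightly smaller ball.
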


 In a special case of  holomorphic vector bundles over  compact K\"ahler manifolds, Theorem \ref{Theorem 1.4} was firstly proved by Hong-Tian \cite{HT1},   which was used to settle    the  Bando-Siu conjecture   on  K\"ahler
manifolds (see, e.g., \cite{Sib}). For a general case   of  vector bundles over  Riemanian manifolds, Hong-Tian \cite{HT1} only proved Theorem \ref{Theorem 1.4}   for a special sequence of $t_i\to\infty$.
Recently, Waldron \cite{W3} improved the result     for any sequence $t_i\to\infty$ using an alternative method. However, the concentrated set $\Sigma$ defined in \cite{W3} has a more complicated form than the form in \eqref{Conc}.   Using a modification of the method in \cite{HT1},  we prove   Theorem \ref{Theorem 1.4} any sequence of $t_i\to \infty$.

  The paper has been organised as follows. In Section 2, we give a priori estimates and present new proofs of the Yang-Mills flow. In Section 3, we develop a parabolic formulation of the gauge fixing theorem. In Section 4, we  prove  Theorem \ref{Theorem 1.2}. In Section 5, we prove Theorems \ref{Theorem1.3'}-\ref{Theorem 1.4}.

\medskip
\noindent{\bf AI Disclosure:}{
The first version of this paper was finished in 2025.
 The authors     developed and verified all proofs  without using AI, and   take full responsibility for the content of the manuscript.
}

\section{Notation and new  estimates   on  the Yang-Mills  flow}

We first provide some notation and results of the Yang-Mills  flow. For an integer $k\geq 0$, set   $\Omega ^k(Ad \,E ) := \Gamma
(Ad \,E \otimes\Omega ^k(M))$.
In local coordinates,   $a\in \Omega ^k(Ad \,E )$ is  represented   by
$$  a(x) =  \sum_{i_1<\cdots <i_k} a_{i_1, \cdots, i_k}(x)dx_{i_1}\wedge \cdots  \wedge dx_{i_k} $$
with $a_{i_1, \cdots, i_k}(x)\in \mathfrak{g}$.
For $a, b\in \Omega ^k(Ad \,E )$, the inner product $\<a,b\>$  is defined by
$$  \< a, b\> :=
    - \sum \text{Trace}\(a_{i_1,\cdots, i_k}b^*_{j_1, \cdots, j_k}\)
     \< dx_{i_1}\wedge  \cdots \wedge  dx_{i_k},
      dx_{j_1}\wedge \cdots  \wedge  dx_{j_k}\> , $$
where $b^*_{j_1, \cdots, j_k}$ is the  conjugate transpose of   $b_{j_1, \cdots, j_k}$.

Let $D_{\text{ref}}$ be a given smooth reference connection in $E$.  For a connection $D_{A}$, we write  $D_A:=D_{\text{ref}} +A$  with $A\in \Omega^1(\text{Ad} \,E )$.
Each connection  $A$ is  compatible with the Riemannian structure of $M$; i.e.
 for
 an exterior derivative
$$D_A : \Omega ^k(Ad\,E) \to \Omega ^{k+1}(Ad \, E)  \;\;
\mbox {for an integer} \;\;k \geq 0,  $$
there exists an adjoint operator
$$D_A^*: \Omega^{k+1}(\text{Ad} \ E )\to \Omega^{k}(\text{Ad} \ E)$$
such that
$$ \int _M \<D_Aa, b\>\,dv=\int _M \<a, D_A^*b\>\,dv$$
for all $a\in \Omega^{k}(\text{Ad} \ E)$ and $b\in \Omega^{k+1}(\text{Ad} \ E )$.
With the above fibrewise inner product,   we  can define
$L^2\(\Omega ^p(Ad \,E )\)$ as the completion of $C^\infty $ sections
under the $L^2$-norm.

Let  $D_{A_0}$ be a  given smooth connection in $E$. For a connection $D_{a(t)}$, we write $D_{a(t)}=D_{\text{ref}}+a(t)$. Then
\[  F_{D_{a(t)}} = F(D_{\text{ref}}) +D_{\text{ref}}a(t) +a(t)\#a(t).\]
Following from \cite{St2}, we can construct an equivalent flow
\begin{equation}\label {2.1}
    \pfrac{a(t)}{t}=- D_{a(t)}^*F_{D_{a(t)}}- D_{a(t)}( D_{a(t)}^*a)
\end{equation}
with $a(0)=A_0$, and  $D_{A_0}=D_{\text{ref}} +A_0$ a given smooth connection in $E$. We note that  (\ref{2.1})  is a nonlinear parabolic
system. Subsequently, according to the well-known theory of partial differential
equations, there exists a unique smooth solution of (\ref{2.1}) with
the initial connection $a(0)$ on $M\times[0,t_1]$ for some $t_1>0$. Following from the
theory of ordinary differential equations, there exists a unique smooth
solution $S\in G$ to the following initial problem:
 \begin{equation}\label {2.2} \frac d{dt} S=-S\circ (D_{a}^*a) \quad \mbox{in } M\times [0, t_1]
\end{equation}
with initial connection  $S(0)=I$ for some $t_1>0$.

Through
 a gauge transformation, we can determine that $S^{-1}(t)\in G$, $ D_{a(t)}$ is gauge equivalent to
 \begin{equation}\label {2.3} D_{A(t)}=(S^{-1})^*(D_{a(t)})=S\circ   D_{a(t)}  \circ S^{-1}   \end{equation}
satisfying
\[F_A=S F_{D_{a}} S^{-1}.\]
Combining (\ref{2.1}), (\ref{2.2}) with  (\ref{2.3}) yields
  \begin{eqnarray*}\frac d{dt} D_A&=& \frac {dS}{dt} \circ  D_{a(t)} \circ S^{-1}
  +S\circ \frac {d  D_{a(t)}}{dt} \circ
  S^{-1} +
S\circ  D_{a(t)} \circ \frac {dS^{-1}}{dt}  \\
&=&S^{-1}\left ( - D_{a(t)}^*F_{D_{a(t)} }\right )S\\
    &=&-D_A^*F_A
 \end{eqnarray*}
 (e.g.  \cite {St2}, \cite {Ho1}).
 This shows that
$D_A=(S^{-1})^* (D_{a(t)})$ satisfies the Yang-Mills flow with
$A(0)=A_0$ in $M\times [0,t_1]$ for some $t_1>0$ and is unique (see \cite{HTY}).

\begin{lem}  \label{Lemma 1} Let $A(t)$ be a smooth solution to the Yang-Mills flow (\ref{YMF}) in $M\times
 [0,T]$ with initial connection $A(0)=A_0$ for some $T>0$. Then, for $0<s\leq T$, we have
\begin{equation} \label{Energy identity}
  \int_M |F_{A(s)}|^2  \,dv +2
  \int_0^s \int_M  \abs{\pfrac{A}{t}}^2 dv\,dt= \int_M  |F_{A_0}|^2
  \,dv
\end{equation}
and
 \begin{equation} \label{Loca identity}
    \int_{B_{R_0}(x_0)}| F_{A(s)}|^2 \, dv\leq \int_{B_{2R_0}(x_0)} |F_{A_0}|^2
  \,dv + \frac C {R_0^2}\int_0^s  \int_{B_{2R_0}(x_0)} |F_A|^2 \, dv\,dt.
 \end{equation}
\end{lem}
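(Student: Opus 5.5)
We compute the time derivative of the Yang--Mills energy along the flow and use the Bianchi identity; for the local inequality we insert a cutoff function and absorb the resulting error term. The basic computation is that for a smooth family $A(t)$ we have $\frac{\partial}{\partial t}F_A = D_A\bigl(\pfrac{A}{t}\bigr)$, since $F_{A+h}=F_A+D_Ah+h\wedge h$. Hence
\[
\frac{d}{dt}\int_M |F_A|^2\,dv = 2\int_M \Bigl\langle D_A\pfrac{A}{t}, F_A\Bigr\rangle dv = 2\int_M \Bigl\langle \pfrac{A}{t}, D_A^*F_A\Bigr\rangle dv = -2\int_M \Bigl|\pfrac{A}{t}\Bigr|^2 dv .
\]
The last equality uses the flow equation (\ref{YMF}). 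Integrating from $0$ to $s$ gives (\ref{Energy identity}) directly, since $M$ is closed and there are no boundary terms.

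For (\ref{Loca identity}), fix a cutoff $\phi\in C_c^\infty(B_{2R_0}(x_0))$ with $0\le\phi\le1$, $\phi\equiv1$ on $B_{R_0}(x_0)$ and $|\nabla\phi|\le C/R_0$. Repeating the computation with the weight $\phi^2$ gives
\[
\frac{d}{dt}\int_M \phi^2|F_A|^2\,dv = 2\int_M \Bigl\langle D_A\pfrac{A}{t}, \phi^2F_A\Bigr\rangle dv .
\]
Integrating by parts produces
\[
D_A^*(\phi^2F_A)=\phi^2D_A^*F_A - 2\phi\,\iota_{\nabla\phi}F_A ,
\]
up to sign convention. Substituting the flow equation, we get
\[
\frac{d}{dt}\int_M \phi^2|F_A|^2\,dv = -2\int_M\phi^2\Bigl|\pfrac{A}{t}\Bigr|^2dv + 4\int_M \phi\,\Bigl\langle \pfrac{A}{t}, \iota_{\nabla\phi}F_A\Bigr\rangle dv .
\]

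The cross term is the only delicate point. We treat it with Cauchy--Schwarz and Young's inequality:
\[
4\int_M \phi\,\Bigl|\pfrac{A}{t}\Bigr|\,|\nabla\phi|\,|F_A|\,dv \le 2\int_M\phi^2\Bigl|\pfrac{A}{t}\Bigr|^2dv + 2\int_M|\nabla\phi|^2|F_A|^2dv .
\]
The first piece is absorbed exactly by the good term $-2\int_M\phi^2\bigl|\pfrac{A}{t}\bigr|^2dv$. Since $|\nabla\phi|\le C/R_0$ and $\nabla\phi$ is supported in $B_{2R_0}(x_0)$, this yields
\[
\frac{d}{dt}\int_M\phi^2|F_A|^2\,dv \le \frac{C}{R_0^2}\int_{B_{2R_0}(x_0)}|F_A|^2\,dv .
\]
Integrating in time over $[0,s]$ and using $\phi\equiv1$ on $B_{R_0}(x_0)$ and $\phi\le1$ on $B_{2R_0}(x_0)$ gives (\ref{Loca identity}). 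No sharp constants are needed, so the argument is elementary once the variation formula for $F_A$ is in place.
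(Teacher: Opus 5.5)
Your proof is correct and follows essentially the same route as the paper: differentiate $\int_M|F_A|^2\phi^2\,dv$, integrate by parts against $D_A$, use Young's inequality to absorb the cutoff cross term into the dissipation $\int_M\phi^2|D_A^*F_A|^2\,dv=\int_M\phi^2|\partial_t A|^2\,dv$, integrate in time, and take $\phi\equiv 1$ for the energy identity. The only cosmetic differences are that the paper starts from $\partial_t F_A=-D_AD_A^*F_A$ and keeps half of the dissipation term, whereas you start from $\partial_t F_A=D_A(\partial_t A)$ and absorb all of it.
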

\begin{proof}
  Let $\phi\in C_0^{\infty}(B_{2R_0}(x_0))$ be a cutoff function with $\phi =1$ in $B_{R_0}(x_0)$ and $|\nabla \phi|\leq \frac C{R_0}$.
Recall from \cite{St2} that
\[\frac {\partial F_A}{\partial t}=-D_AD_A^*F_A.\]
Then using Young's inequality, we have
 \begin{eqnarray*}
&&\frac 1 2 \frac{d}{dt}\int_M | F_A|^2\phi^2\, dv=\int_M \left <\frac {\partial F_A}{\partial t}, F_A\right >\phi^2\, dv\\
&=& -\int_M |D_A^*F_A|^2\phi^2 \, dv +\int_M \left <D_A^*F_A, F_A\right >\#\nabla \phi^2 \, dv\\
&\leq & -\frac 1 2\int_M |D_A^*F_A|^2\phi^2 \, dv +\int_M |F_A|^2 |\nabla \phi |^2 \, dv.
 \end{eqnarray*}
Subsequently, \eqref {Loca identity} follows from integrating with respect to $t$. Choosing $\phi=1$, we obtain \eqref{Energy identity}.
\end{proof}

Given a connection $D_A$, we define the Hodge Laplacian
$$  \Delta_A := D_A^*D_A + D_AD_A^*  , $$
 on $\Omega ^p(\text{Ad} \ E )$ for $p \geq 0$. We also have the ``trace Laplacian'' $\D_A^*\D_A$ corresponding to the
covariant derivative $\D_A$.  The relation between these operators is given by
\begin{equation}\label{Wz}
 \D_A^*\D_A\phi =  \Delta_A \phi +R_M\#\phi+ F_A\#\phi \,,
 \end{equation}
where $\phi \in \Omega ^p(\text{Ad} \ E )$,
  $R_M$ is the Riemannian curvature of $M$ and
$\#$ denotes  a multi-linear map with smooth coefficients   (see \cite{HT1}). Equation
\eqref{Wz} is the so-called Weitzenb\"ock formula, from which the following lemmas are derive from.

\begin{lem}  \label{Lemma 2} Let $A(t)$ be a smooth solution to the Yang-Mills flow (\ref{YMF}) in $M\times
 [0,T]$ with initial connection $A(0)=A_0$. Assume that there are constants  $\varepsilon>0$ and $0 < R_0 < 1$ such that
 \[\sup_{0\leq t\leq T} \max_{x_0\in M} \int_{B_{R_0}(x_0)} |F_A(\cdot ,t)|^{n/2}dv \leq \varepsilon .\]
 Then there exists a constant $C > 0$ such that
 \begin{equation}\label{T2}
    \int_0^T \int_M|\nabla_A F_A|^2\,dv\, dt\leq C\left(1+\frac {T}{R_0^2}\right)\int_M|F_{A_0}|^2\, dv.
\end{equation}
\end{lem}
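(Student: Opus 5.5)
We estimate $\int_0^T\int_M|\nabla_A F_A|^2$ by combining the Weitzenb\"ock formula \eqref{Wz} with the energy identity of Lemma \ref{Lemma 1}. The smallness of $\|F_A\|_{L^{n/2}}$ on balls of radius $R_0$ is what lets us absorb the cubic curvature term.

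\textbf{Step 1: integral identity.} The Bianchi identity gives $D_AF_A=0$, so $\Delta_AF_A=D_AD_A^*F_A$. Applying \eqref{Wz} to $F_A$ yields
\[
\D_A^*\D_A F_A = D_AD_A^*F_A + R_M\#F_A + F_A\#F_A .
\]
Pair this with $F_A\phi^2$, where $\phi$ is a cutoff supported in $B_{R_0}(x_0)$, equal to $1$ on $B_{R_0/2}(x_0)$, with $|\nabla\phi|\le C/R_0$. Integrating by parts on both sides and using Young's inequality on the terms where a derivative falls on $\phi$, we get
\[
\int |\nabla_AF_A|^2\phi^2 \le C\int |D_A^*F_A|^2\phi^2 + C\int |F_A|^2\Big(1+\frac{1}{R_0^2}\Big) + C\int |F_A|^3\phi^2 .
\]
Here $|R_M|$ is bounded because $M$ is compact, and $R_0<1$.

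\textbf{Step 2: absorbing the cubic term.} This is the main step. By H\"older,
\[
\int|F_A|^3\phi^2\le \Big(\int_{B_{R_0}}|F_A|^{n/2}\Big)^{2/n}\Big(\int |F_A\phi|^{\frac{2n}{n-2}}\Big)^{\frac{n-2}{n}} \le \varepsilon^{2/n}\,\|F_A\phi\|^2_{L^{2n/(n-2)}}.
\]
Kato's inequality $|\nabla|F_A||\le|\nabla_AF_A|$ and the Sobolev inequality on the small ball (valid with a uniform constant since $R_0<1$ is below the injectivity scale) give
\[
\|F_A\phi\|^2_{L^{2n/(n-2)}}\le C\int \big(|\nabla_AF_A|^2\phi^2+|F_A|^2|\nabla\phi|^2\big).
\]
For $\varepsilon$ small, the gradient term is absorbed into the left-hand side of Step 1. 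We are left with
\[
\int_{B_{R_0/2}}|\nabla_AF_A|^2\le C\int_{B_{R_0}}|D_A^*F_A|^2+\frac{C}{R_0^2}\int_{B_{R_0}}|F_A|^2 .
\]

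\textbf{Step 3: covering and time integration.} Cover $M$ by balls $B_{R_0/2}(x_i)$ such that the doubled balls $B_{R_0}(x_i)$ have bounded overlap, with a multiplicity constant depending only on $M$. Summing the local estimate over $i$ gives
\[
\int_M|\nabla_AF_A|^2\le C\int_M|D_A^*F_A|^2+\frac{C}{R_0^2}\int_M|F_A|^2 .
\]
Now integrate over $[0,T]$. Since $\partial_tA=-D_A^*F_A$, the energy identity \eqref{Energy identity} gives $\int_0^T\int_M|D_A^*F_A|^2\le\frac12\int_M|F_{A_0}|^2$. It also shows that $\int_M|F_A(t)|^2$ is nonincreasing, so $\int_0^T\int_M|F_A|^2\le T\int_M|F_{A_0}|^2$. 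Together these give \eqref{T2}.

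The delicate point is Step 2: the Sobolev constant must be uniform at scale $R_0$ (the Sobolev inequality for compactly supported functions is scale-invariant), and the integrations by parts must handle the pairing of $D_A^*F_A$ with the cutoff, which produces only the harmless lower-order terms above.
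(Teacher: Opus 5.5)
Your proof is correct and follows essentially the same route as the paper: the Weitzenb\"ock formula plus the Bianchi identity, H\"older and Sobolev on $R_0$-balls to absorb the cubic term via the $L^{n/2}$ smallness (with Kato's inequality), a bounded-overlap covering, and the energy identity to control both $\int_0^T\int_M|D_A^*F_A|^2$ and $\int_0^T\int_M|F_A|^2$. The only difference is that the paper integrates by parts globally on the closed manifold $M$ and localizes just the cubic term $\int_M|F_A|^3$ over the cover, whereas you localize the whole identity with cutoffs before summing, which is slightly more careful; note also that $R_0<1$ does not by itself put you below the injectivity scale, but the Sobolev inequality on $M$ with its lower-order $L^2$ term works equally well, since that term is absorbed into $CR_0^{-2}\int_{B_{R_0}}|F_A|^2$.
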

\begin{proof}
It follows from the Bianchi identity  $D_A F_A=0$ and the Weizenb\"ock formula \eqref{Wz} that
\begin{equation}\label{WZ}
  D_AD_A^*F_A=\nabla_A^*\nabla_A F_A+ F_A\#F_A+ R_M\#F_A.
\end{equation}

Let $\{B_{R_0}(x_i)\}_{i=1}^J$ be a standard open cover of $M$ with a finite subcover at each point. Using the H\"older inequality and the Sobolev inequality, we get
\begin{eqnarray*}
  &&\int_M |\nabla_A F_A|^2 dv \leq  \int_M |D^*_AF_A|^2 dv +C\int_M |F_A|^3+ |F_A|^2 dv \\
  &\leq & C\sum_{i=1}^J\left (\int_{B_{R_0}(x_i)} |F_A|^{n/2}dv\right )^{\frac 2 n}\left (\int_{B_{R_0}(x_i)} |F_A|^{\frac {2n}{(n-2)}}dv\right )^{\frac {(n-2)}{n}}\\
  &&+C\int_M |D^*_AF_A|^2 + |F_A|^2 dv \\
  &\leq & C\varepsilon^{2/n} \int_M|\nabla_AF_A|^2\,dv +C\int_M\left(1+\frac {1}{R_0^2}\right)|F_A|^2+|D^*_AF_A|^2 \, dv.
\end{eqnarray*}
Integrating and choosing $\varepsilon > 0$ sufficiently small, we obtain \eqref{T2}.
\end{proof}

\begin{lem}  \label{Lemma 3} Let $A(t)$ be a smooth solution to the Yang-Mills flow (\ref{YMF}) in $M\times [0,T]$. There exists a constant
  $\varepsilon =\varepsilon (E) >0$ such that if
\[\sup_{0\leq t\leq T} \max_{x_0\in M} \int_{B_{R_0}(x_0)} |F_A(\cdot ,t)|^{n/2}dv \leq \varepsilon  \]
for some $R_0 > 0$. Then there exists a constant $C > 0$ such that
 \begin{eqnarray} \label{local energy}
    &&\qquad\int_{B_{R_0}(x_0)} |F_A|^{n/2} (\cdot ,t) dv +\int_0^t\int_{B_{R_0}(x)}  |F_A|^{\frac {n-4}2} |\nabla_A F_A|^2 \,dv\, ds\\&&\leq \int_{B_{2R_0}(x)} |F_A|^{n/2}  (0) dv +  \frac{C}{R_0^2} \int_0^t \int_{B_{2R_0}(x_0)} |F_A|^{n/2} (s) dv\, ds\nonumber
  \end{eqnarray} for any $x_0\in M$ and $t\in [0, T]$.
\end{lem}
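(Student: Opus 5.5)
The plan is a localized $L^{n/2}$ energy estimate for $|F_A|$. It mirrors the proof of Lemma \ref{Lemma 1}, with the weight $|F_A|^{\frac{n-4}{2}}$ inserted, and absorbs the cubic curvature term using the smallness hypothesis together with Sobolev's inequality, as in Lemma \ref{Lemma 2}.

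First we derive a pointwise evolution inequality. From $\partial_t F_A=-D_AD_A^*F_A$ and the Weitzenb\"ock identity \eqref{WZ},
\[
\partial_t F_A=-\nabla_A^*\nabla_A F_A+F_A\#F_A+R_M\#F_A .
\]
Let $\phi\in C_0^\infty(B_{2R_0}(x_0))$ with $\phi=1$ on $B_{R_0}(x_0)$ and $|\nabla\phi|\le C/R_0$. To avoid the singularity of $|F_A|^{\frac{n-4}{2}}$ at zeros when $n$ is small, we may work with $(|F_A|^2+\delta)^{\frac{n-4}{4}}$ and let $\delta\to0$ at the end. We compute
\[
\frac{2}{n}\frac{d}{dt}\int |F_A|^{n/2}\phi^2\,dv=\int |F_A|^{\frac{n-4}{2}}\langle \partial_tF_A,F_A\rangle\phi^2\,dv .
\]
Integrating the Laplacian term by parts gives the good term $-\int |F_A|^{\frac{n-4}{2}}|\nabla_AF_A|^2\phi^2$. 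It also gives the term $-\frac{n-4}{2}\int|F_A|^{\frac{n-4}{2}}\,|\nabla|F_A||^2\phi^2\le 0$, which has a good sign by Kato's inequality and can be discarded. The remaining cutoff term is bounded by
\[
\frac C{R_0}\int |F_A|^{\frac{n-2}{2}}|\nabla_AF_A|\phi\,dv .
\]
By Young's inequality this is at most half the good term plus $\frac{C}{R_0^2}\int_{B_{2R_0}}|F_A|^{n/2}$. The curvature term $R_M\#F_A$ contributes $C\int|F_A|^{n/2}\phi^2$, which is harmless since $R_0<1$.

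The main obstacle is the cubic term $\int |F_A|^{\frac{n-4}{2}+3}\phi^2=\int |F_A|^{\frac n2+1}\phi^2$. Set $f=|F_A|^{n/4}\phi$. Then $|F_A|^{\frac n2+1}\phi^2=|F_A|\,f^2$, and by H\"older followed by Sobolev on the ball,
\[
\int |F_A| f^2\le \Big(\int_{B_{2R_0}}|F_A|^{n/2}\Big)^{2/n}\|f\|_{L^{\frac{2n}{n-2}}}^2\le C\varepsilon^{2/n}\int |\nabla f|^2 .
\]
Using Kato's inequality, $|\nabla f|^2\le C|F_A|^{\frac{n-4}{2}}|\nabla_AF_A|^2\phi^2+C|F_A|^{n/2}|\nabla\phi|^2$. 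Choosing $\varepsilon$ small, the first piece is absorbed into the good term and the second is of the allowed form $\frac{C}{R_0^2}\int_{B_{2R_0}}|F_A|^{n/2}$. The smallness is needed on $B_{2R_0}(x_0)$, while the hypothesis is stated on $R_0$-balls; we cover $B_{2R_0}(x_0)$ by a bounded number $N(n)$ of $R_0$-balls, which yields $\int_{B_{2R_0}}|F_A|^{n/2}\le N\varepsilon$.

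Finally, we integrate the resulting differential inequality in time from $0$ to $t$ and use $\phi=1$ on $B_{R_0}(x_0)$. This gives \eqref{local energy}, with the constant $\frac n4$ in front of the gradient integral normalized away by adjusting $C$.
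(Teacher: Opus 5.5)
Your proposal is correct and follows essentially the same route as the paper. Both use a cutoff $\phi$ on $B_{2R_0}(x_0)$, the Weitzenb\"ock form of $\partial_t F_A$, and integration by parts to produce the weighted gradient term. Both then absorb the cubic term using H\"older, Sobolev applied to $|F_A|^{n/4}\phi$, Kato's inequality, and a covering of $B_{2R_0}(x_0)$ by boundedly many $R_0$-balls.

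One small correction: the discarded term $\frac{n-4}{2}\int |F_A|^{\frac{n-4}{2}}|\nabla |F_A||^2\phi^2$ is nonnegative for $n\ge 4$ without any appeal to Kato; Kato is only needed when bounding $|\nabla(|F_A|^{n/4}\phi)|^2$. Your $\delta$-regularization of the weight is a minor rigor point that the paper leaves implicit.
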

\begin{proof} Let $\phi\in C_0^{\infty}(B_{2R_0}(x_0))$ be a cutoff function with $\phi =1$ in $B_{R_0}(x_0)$ and $|\nabla \phi|\leq \frac C{R_0}$.
It follows from  (\ref{YMF}) and the Weizenb\"ock formula (\ref{WZ}) that
\begin{equation}\label{W2}
\frac {\partial F_A}{\partial t}=-D_AD_A^*F_A=-\nabla_A^*\nabla_A F_A+ F_A\#F_A+ R_M\#F_A.
\end{equation}
Using \eqref{W2},  we   derive
\begin{eqnarray*}
  && \frac {d}{dt}  \int_{B_{2R_0}(x)} |F_A|^{n/2}\phi^2\,dv=\frac n 2\int_{B_{2R_0}(x)} |F_A|^{\frac {n-4}2}\left <F_A, \, \frac {\partial F_A}{\partial t}\right >\phi^2\,dv\\
  &&=-\frac n 2\int_{B_{2R_0}(x)} \left < \nabla_A(|F_A|^{\frac {n-4}2}F_A\phi^2), \,  \nabla_AF_A\right >\,dv\\
  &&\quad+\frac n 2\int_{B_{2R_0}(x)} |F_A|^{\frac {n-4}2}\left <F_A, \, F_A\#F_A+ R_M\#F_A \right>\phi^2\,dv\\
  && \leq -\frac n 2\int_{B_{2R_0}(x)}  (|F_A|^{\frac {n-4}2} |\nabla_A F_A|^2 +\frac {n-4}2 |F_A|^{\frac {n-4}2}|\nabla |F_A||^2) \phi^2\,dv \\
  &&\quad +C\int_{B_{2R_0}(x)} |F_A|^{\frac {n-4}2} (|F_A|^3+ |F_A|^{2}   + \varepsilon  |\nabla_A F_A|^2 )\phi^2 +  |\nabla\phi |^2 \,|F_A|^{n/2}\,dv.
  \end{eqnarray*}
 Now, since $B_{2R_0}(x_0)$ is covered by a fixed finite number of $B_{R_0}(x_i)$, it follows from H\"older's inequality that
  \begin{eqnarray*}
  &&  \int_{B_{2R_0}(x)} |F_A|^{\frac {n-4}2} |F_A|^3\phi^2 \,dv=\int_{B_{2R_0}(x)}  |F_A||F_A|^{\frac {n}2}\phi^2 \,dv\\
  &&\leq \left (\int_{B_{2R_0}(x)} |F_A|^{n/2}dv\right )^{\frac 2 n}\left (\int_{B_{2R_0}(x)} |F_A|^{\frac {n^2}{2(n-2)}}\phi^{\frac {n}{n-2}} dv\right )^{\frac {n-2}{n}}\\
  &&\leq C\varepsilon^{\frac 2 n}\int_{B_{2R_0}(x)} |\nabla (|F_A|^{\frac {n}{4}}\phi) |^2 dv\\
  &&\leq C\varepsilon^{\frac 2 n}\int_{B_{2R_0}(x)} |F_A|^{\frac {n-4}{2}}|\nabla |F_A| |^2\phi^2 dv+ C\int_{B_{2R_0}(x)} |F_A|^{\frac {n}{2}}|\nabla \phi  |^2 dv.
  \end{eqnarray*}
  Combining the above inequalities and choosing $\varepsilon > 0$ sufficiently small,  we obtain \eqref{local energy}.
\end{proof}

For each $R>0$, let
$$P_R(x_0, t_0) :=B_R(x_0)\times [t_0-R^2, t_0 ].$$
\begin{lem}  \label{Lemma 4} Let $A(t)$ be a smooth solution to the Yang-Mills flow (\ref{YMF}) in $M\times [0,T]$. There exists a  sufficiently small constant
  $\varepsilon =\varepsilon (E) >0$ such that
if
\[ \sup_{t_0-4R^2_0\leq t\leq t_0}\int_{B_{2R_0}(x_0)} |F_A(\cdot ,t)|^{n/2}dv \leq \varepsilon  \]
for some $R_0 > 0$. Then
 \begin{equation} \label{global}
     \int_{P_{R_0}(x_0, t_0)} |F_A(x ,t)|^{\frac n 2 +2}dv\,dt  \leq  C\(1+\frac 1{R_0^{n}}\) \int_{P_{2R_0}(x_0)} |F_A(x ,t)|^{2}dv\,dt,
  \end{equation}  for some constant $C > 0$ independent of $R_0$ and the connection $A$.
\end{lem}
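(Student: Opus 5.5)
The plan is an iterated, parabolic version of the Hölder--Sobolev trick from the proof of Lemma \ref{Lemma 3}. Each step raises the integrability exponent of $|F_A|$ by a fixed amount and costs a factor $C(1+R_0^{-2})$. Starting from $|F_A|^2$ and reaching $|F_A|^{\frac n2+2}$ takes about $n/2$ such steps. This should produce the factor $C(1+R_0^{-2})^{n/2}\le C(1+R_0^{-n})$ in \eqref{global}; the count of factors matches, but I have not checked the constants in detail.

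\emph{Step 1 (local $L^p$ energy inequality).} Fix $p\ge 2$ and nested parabolic cylinders $P_{r}(x_0,t_0)\subset P_{r'}(x_0,t_0)$ with $R_0\le r<r'\le 2R_0$. Take a space--time cutoff $\psi$ with $\psi=1$ on $P_r$, $\psi=0$ near the parabolic boundary of $P_{r'}$, and $|\nabla\psi|^2+|\partial_t\psi|\le C(r'-r)^{-2}$. Multiply \eqref{W2} by $|F_A|^{p-2}F_A\psi^2$ and integrate, exactly as in the proof of Lemma \ref{Lemma 3} but with $n/2$ replaced by $p$ and with the time cutoff. This gives
\[
\sup_t\int |F_A|^p\psi^2\,dv+\int\!\!\int |\nabla(|F_A|^{p/2}\psi)|^2\,dv\,dt
\le C\int\!\!\int |F_A|^{p+1}\psi^2+\Big(1+\tfrac{1}{(r'-r)^2}\Big)|F_A|^p\,dv\,dt .
\]
The Kato inequality $|\nabla|F_A||\le|\nabla_AF_A|$ converts the good term into $|\nabla(|F_A|^{p/2}\psi)|^2$. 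The $R_M\#F_A$ term produces the "$1+$" part.

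\emph{Step 2 (absorbing the cubic term and raising the exponent).} For $0<s\le 1$, apply Hölder on each time slice, then Sobolev:
\[
\int|F_A|^{p+s}\psi^2\,dv\le\Big(\int_{B_{2R_0}}|F_A|^{n/2}\Big)^{\frac{2s}{n}}\Big(\int |F_A|^{\frac{(p+s)-s}{1}\cdot\frac{n}{n-2s}}\psi^{\frac{2n}{n-2s}}\Big)^{\frac{n-2s}{n}}
\le C\varepsilon^{\frac{2s}n}\Big(\int|\nabla(|F_A|^{p/2}\psi)|^2+\dots\Big)^{\theta}\Big(\int|F_A|^p\psi^2\Big)^{1-\theta}.
\]
The second inequality uses Gagliardo--Nirenberg interpolation between $L^2$ and $L^{2n/(n-2)}$ for $|F_A|^{p/2}\psi$. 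For $s=1$ this is exactly the computation in Lemma \ref{Lemma 3}, with $\theta=1$. Since the hypothesis gives $\sup_t\int_{B_{2R_0}}|F_A|^{n/2}\le\varepsilon$, the $s=1$ case absorbs the $|F_A|^{p+1}$ term of Step 1 into the left side. Integrating in time and using the sup-in-time bound from Step 1 then yields
\[
\int_{P_r}|F_A|^{p+s}\,dv\,dt\le C\Big(1+\tfrac1{(r'-r)^2}\Big)\int_{P_{r'}}|F_A|^{p}\,dv\,dt .
\]

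\emph{Step 3 (iteration).} Choose exponents $p_0=2<p_1<\dots<p_m=\frac n2+2$ with increments $s\le1$ and $m$ of order $n/2$. When $n$ is odd, one step uses $s=1/2$. Choose radii $2R_0=r_0>r_1>\dots>r_m=R_0$ with gaps $r_{j-1}-r_j\sim R_0/m$. Chaining Step 2 along these gives \eqref{global}, with constant depending only on $n$, $M$ and $\varepsilon$.

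The main obstacle is the bookkeeping in Step 2 for fractional increments. There one must check that the interpolation exponent leaves a total power of $R_0^{-1}$ no worse than $R_0^{-n}$. If this becomes awkward, I would first try using only integer steps $p\to p+1$. I would then get the final half-step, when $n$ is odd, from Hölder's inequality between $|F_A|^{p}$ and $|F_A|^{p+1}$ on the cylinder, since both are already controlled.
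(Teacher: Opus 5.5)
Your proof is correct, but it is organized differently from the paper's.

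\textbf{Your route.} You climb from $L^2$ to $L^{\frac n2+2}$ by about $n/2$ reverse-H\"older steps $\int_{P_r}|F_A|^{p+1}\le C(1+(r'-r)^{-2})\int_{P_{r'}}|F_A|^{p}$ on a nested chain of cylinders. Each step is the $\varepsilon$-absorption of Lemma \ref{Lemma 3}, carried out at exponent $p$.

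\textbf{The paper's route.} The paper does the energy estimate only once, at the top exponent $p=\frac n2+1$, with the cutoff raised to the power $n$. This gives $\int\!\!\int|F_A|^{\frac n2+2}\phi^n\le C(1+R_0^{-2})\int\!\!\int|F_A|^{\frac n2+1}\phi^{n-2}$. It then uses the exact splitting $|F_A|^{\frac n2+1}\phi^{n-2}=(|F_A|^{\frac n2+2}\phi^n)^{1-\frac 2n}(|F_A|^2)^{\frac 2n}$ together with Young's inequality. This absorbs the left-hand side and lands directly on $\int|F_A|^2$ with the factor $(1+R_0^{-2})^{n/2}$.

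\textbf{Comparison.} The paper's version needs a single pair of cylinders and no case distinction by parity of $n$. The price is absorbing the top-order quantity $\int\!\!\int|F_A|^{\frac n2+2}\phi^n$, which is legitimate only because $A$ is smooth. Your version avoids that absorption and controls every intermediate $L^p$ norm along the way. In exchange, it needs the chain of radii and separate handling of odd $n$.

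\textbf{Odd $n$.} Rely on your fallback rather than the fractional step as you display it. For $s<1$, H\"older in time produces an extra factor $(r')^{2(1-s)}$, which your displayed bound drops. If a half-step is charged a full $C(1+(r'-r)^{-2})$, the total becomes $(1+R_0^{-2})^{(n+1)/2}$, one power of $R_0^{-1}$ too many. Instead, interpolate between two integer levels by Cauchy--Schwarz: $\int|F_A|^{\frac{n+4}2}\le(\int|F_A|^{\frac{n+3}2})^{1/2}(\int|F_A|^{\frac{n+5}2})^{1/2}$. This gives exactly $(1+R_0^{-2})^{n/2}$.
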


\begin{proof} In a neighborhood of each point $x_0\in M$,  we denote by $\nabla_i$   the
first covariant derivative with respect to $e_i$ and  by $\nabla^2_{ij}$
the second covariant derivatives, and so on.
Let $\phi=\phi (x,t)$ be a cut-off function with support in  $B_{2R_0}(x_0)\times [t_0-4R_0^2, t_0]$ such that $\phi =1$ in $B_{R_0}(x_0)\times [t_0-R_0^2, t_0]$,
$|\nabla \phi|\leq C/R_0$, $|\partial_t \phi|\leq \frac 1 {R_0^2}$ and $|\phi|\leq 1$ in  $B_{R_0}(x_0)\times [t_0-4R_0^2, t_0]$.

Recall that
\begin{equation}\label{W2.1}
\frac {\partial F_A}{\partial t}=-D_AD_A^*F_A=-\nabla_A^*\nabla_A F_A+ F_A\#F_A+ R_M\#F_A.
\end{equation}
Multiplying by $|F_A|^{\frac n2-1}F_A\phi^n$,
integrating by parts, and applying Young's inequality, we obtain
\begin{equation}\label{2.13}
\begin{aligned}
&\quad \frac{2}{n+2}\frac{d}{dt}
\int_M |F_A|^{\frac n2+1}\phi^n\,dv
+\frac12
\int_M |F_A|^{\frac n2-1}
|\nabla_A F_A|^2\phi^n\,dv
\\
&\leq
C\int_M |F_A|^{\frac n2+2}\phi^n\,dv
+C\int_M |F_A|^{\frac n2+1}
\left(
\phi^n
+\phi^{n-2}|\nabla\phi|^2
+\phi^{n-1}|\partial_t\phi|
\right)\,dv
\\
&\leq
C\int_M |F_A|^{\frac n2+2}\phi^n\,dv
+C\left(1+\frac1{R_0^2}\right)
\int_M |F_A|^{\frac n2+1}\phi^{n-2}\,dv.
\end{aligned}
\end{equation}
By H\"older's inequality and the Sobolev inequality, we have
\begin{equation}\label{2.14}
\begin{aligned}
&\quad \int_M |F_A|^{\frac n2+2}\phi^n\,dv
\\
&=
\int_{B_{2R_0}(x_0)}
|F_A|
\left(
|F_A|^{\frac{n+2}{4}}\phi^{\frac n2}
\right)^2\,dv
\\
&\leq
\left(
\int_{B_{2R_0}(x_0)}
|F_A|^{\frac n2}\,dv
\right)^{\frac2n}
\left[
\int_M
\left(
|F_A|^{\frac{n+2}{4}}\phi^{\frac n2}
\right)^{\frac{2n}{n-2}}\,dv
\right]^{\frac{n-2}{n}}
\\
&\leq
C\varepsilon^{\frac2n}
\left[
\int_M
\left|
\nabla\left(
|F_A|^{\frac{n+2}{4}}\phi^{\frac n2}
\right)
\right|^2\,dv
+
\int_M |F_A|^{\frac n2+1}\phi^n\,dv
\right]
\\
&\leq
C\varepsilon^{\frac2n}
\int_M |F_A|^{\frac n2-1}
|\nabla_A F_A|^2\phi^n\,dv
+C\varepsilon^{\frac2n}
\left(1+\frac1{R_0^2}\right)
\int_M |F_A|^{\frac n2+1}\phi^{n-2}\,dv.
\end{aligned}
\end{equation}
Substituting \eqref{2.14} into
\eqref{2.13} and choosing $\varepsilon>0$
sufficiently small, we obtain
\begin{equation}
\begin{aligned}
&\quad \frac{2}{n+2}\frac{d}{dt}
\int_M |F_A|^{\frac n2+1}\phi^n\,dv
+\frac14
\int_M |F_A|^{\frac n2-1}
|\nabla_A F_A|^2\phi^n\,dv
\\
&\leq
C\left(1+\frac1{R_0^2}\right)
\int_M |F_A|^{\frac n2+1}\phi^{n-2}\,dv.
\end{aligned}
\end{equation}
Integrating in time yields
\begin{equation}\label{eq:integrated-gradient}
\begin{aligned}
&\int_{P_{2R_0}(x_0,t_0)}
|F_A|^{\frac n2-1}
|\nabla_A F_A|^2\phi^n\,dv\,dt
\leq
C\left(1+\frac1{R_0^2}\right)
\int_{P_{2R_0}(x_0,t_0)}
|F_A|^{\frac n2+1}\phi^{n-2}\,dv\,dt.
\end{aligned}
\end{equation}
Integrating \eqref{2.14} in time and applying
\eqref{eq:integrated-gradient}, we obtain
\begin{equation}\label{eq:integrated-curvature}
\begin{aligned}
&\int_{P_{2R_0}(x_0,t_0)}
|F_A|^{\frac n2+2}\phi^n\,dv\,dt
\leq
C\left(1+\frac1{R_0^2}\right)
\int_{P_{2R_0}(x_0,t_0)}
|F_A|^{\frac n2+1}\phi^{n-2}\,dv\,dt.
\end{aligned}
\end{equation}
Young's inequality implies that
\begin{equation}
\begin{aligned}
&\int_{P_{2R_0}(x_0,t_0)}
|F_A|^{\frac n2+2}\phi^n\,dv\,dt
\\
&\leq
C\left(1+\frac1{R_0^2}\right)^{\frac n2}
\int_{P_{2R_0}(x_0,t_0)}
|F_A|^2\,dv\,dt\\
&\leq
C\left(1+\frac1{R_0^n}\right)
\int_{P_{2R_0}(x_0,t_0)}
|F_A|^2\,dv\,dt.
\end{aligned}
\end{equation}
\end{proof}

\begin{lem}  \label{Lemma  5}   Let $A(t)$ be a smooth solution to the Yang-Mills flow (\ref{YMF}) in $B_{r_0}(x_0)\times
 [t_0-4R_0^2,t_0]$.  Assume that there exists a sufficiently small $\varepsilon >0$ such that
 \[\sup_{t_0-4R_0^2\leq t\leq t_0}  \int_{B_{2R_0}(x_0)} |F_A(x,t)|^{n/2}dv \leq \varepsilon .\]
 Then there exists a constant $C > 0$  independent of the connection $A$ such that
\begin{eqnarray*}
          &&   |F_A(x_0, t_0)|^2\leq  C\(\frac 1 {R_0^4}+\frac 1{R_0^{n+2}} \int_{P_{2R_0}(x_0)} |F_A(x ,t)|^{2}dv\,dt \).
    \end{eqnarray*}
\end{lem}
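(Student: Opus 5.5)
We prove the pointwise bound by Moser iteration for the scalar function $u=|F_A|$ (or a regularized $u=(|F_A|^2+\delta)^{1/2}$), following the idea of Hungerb\"uhler \cite{Hu}. The smallness hypothesis will absorb the quadratic curvature term, and Lemma \ref{Lemma 4} supplies the starting integrability.

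\textbf{Step 1: a subsolution inequality.} From \eqref{W2.1} and Kato's inequality $|\nabla|F_A||\le|\nabla_AF_A|$ we obtain
\[\partial_t u+\nabla^*\nabla u\le C(|F_A|+1)\,u\]
in the weak sense. Thus $u$ is a subsolution of a heat equation with potential $V=C(|F_A|+1)$. The bound $\sup_t\|F_A\|_{L^{n/2}(B_{2R_0})}\le\varepsilon$ places $V$ at the critical scaling, which is too weak for the standard iteration. Lemma \ref{Lemma 4}, however, gives $|F_A|\in L^{\frac n2+2}(P_{R_0})$, and $\frac n2+2>\frac{n+2}{2}$ is supercritical for parabolic Moser iteration. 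Concretely,
\[\int_{P_{R_0}}|F_A|^{\frac n2+2}\le C\Big(1+R_0^{-n}\Big)\int_{P_{2R_0}}|F_A|^2.\]

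\textbf{Step 2: energy inequalities and iteration.} Take nested parabolic cylinders $P_{r_k}$ with $r_k=R_0(\tfrac12+2^{-k-1})$ and cutoffs $\phi_k$ satisfying $|\nabla\phi_k|^2+|\partial_t\phi_k|\le C4^k/R_0^2$. Multiply the inequality by $u^{p-1}\phi_k^2$ for $p\ge2$. This gives
\[\sup_t\int u^p\phi_k^2+\int|\nabla(u^{p/2}\phi_k)|^2\le Cp\int\Big(V+\frac{4^k}{R_0^2}\Big)u^p\phi_{k-1}^2.\]
There are two ways to handle the potential term $\int Vu^p\phi^2$. The first is to absorb it using the spatial $\varepsilon$-smallness: by H\"older and Sobolev as in \eqref{2.14}, it is at most $C\varepsilon^{2/n}\int|\nabla(u^{p/2}\phi)|^2$ plus lower-order terms. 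The constant then grows like $p$, so this absorption only works while $Cp\varepsilon^{2/n}<\frac12$, i.e.\ for bounded $p$. For this reason I would instead use the $L^{\frac n2+2}$ bound as a supercritical coefficient bound. With $q=\frac n2+2$, H\"older in space-time gives
\[\int Vu^p\phi^2\le\|V\|_{L^q(P)}\,\|u^p\phi^2\|_{L^{q'}}.\]
The parabolic Sobolev embedding $L^\infty_tL^1_x\cap L^1_tW^{1,1}$-type (exponent gain $\chi=\frac{n+2}{n}$ applied to $w=u^{p/2}\phi$) then interpolates $\|w^2\|_{L^{q'}}$ between $\|w^2\|_{L^1}$ and the energy, since $q'<\chi$. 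This yields the reverse H\"older step
\[\|u\|_{L^{p\chi'}(P_{r_k})}\le\big(C^k p^{c}(1+\Lambda)R_0^{-2}\big)^{1/p}\|u\|_{L^p(P_{r_{k-1}})},\]
where $\Lambda$ depends on $R_0^{2-\frac{2(n+2)}{q}}\|V\|_{L^q}$. This quantity is scale-invariant, and by Lemma \ref{Lemma 4} it is bounded by a constant times $1+R_0^{-n}\int_{P_{2R_0}}|F_A|^2$.

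\textbf{Step 3: conclusion.} Start the iteration from $p=2$ and take the product over $k$. This gives
\[\sup_{P_{R_0/2}}u^2\le C\,R_0^{-(n+2)}\int_{P_{R_0}}u^2,\]
up to a factor controlled by $\Lambda$. To make this factor harmless, rescale first: by scaling invariance, set $R_0=1$ with $A_R(x,t)=R_0A(x_0+R_0x,t_0+R_0^2t)$. In the rescaled picture, split according to whether $R_0^{-n}\!\int_{P_{2R_0}}|F_A|^2$ is at most $1$ or larger. Alternatively, use $\int_{P_{2R_0}}|F_A|^2\le C R_0^{2}\cdot R_0^{n-4}\varepsilon^{4/n}$, which follows from H\"older and the $L^{n/2}$ bound and shows $\Lambda\le C$ uniformly. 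Undoing the scaling yields
\[|F_A(x_0,t_0)|^2\le C\Big(R_0^{-4}+R_0^{-n-2}\int_{P_{2R_0}}|F_A|^2\Big),\]
where the $R_0^{-4}$ term also absorbs the contributions of $R_M$ and the constant $1$ in $V$.

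\textbf{Main obstacle.} The hardest step is the potential term in Step 2: making the constant grow at most polynomially in $p$ even though the only bounds available are the critical $L^{n/2}$ smallness and the $L^{\frac n2+2}$ bound. If the H\"older/interpolation bookkeeping fails, my fallback is to bootstrap in finitely many steps. I would use the $\varepsilon$-absorption, which is valid for bounded $p$, to raise $u$ into some $L^{p_0}$ with $p_0>\frac{n+2}{2}$ well beyond $q$, and then run the standard De Giorgi--Moser scheme with a bounded potential in $L^{p_0}$.
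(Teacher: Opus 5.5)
Your argument is correct, but its key step differs from the paper's. Both proofs rescale to $R_0=1$ and start from the $L^{\frac n2+2}$ bound of Lemma \ref{Lemma 4}. The difference is in how the reaction term $|F_A|^3$ is handled.

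\textbf{The paper's route.} The paper never treats $|F_A|$ as a potential. It multiplies \eqref{W2.1} by $(1+|F_A|)^{\beta+1}\phi^2$ and bounds the bad term crudely by $C(\beta+2)\int(1+|F_A|)^{\beta+3}\phi^2$, so it loses one power of $|F_A|$ per step. The parabolic Sobolev inequality then gains a factor $1+\frac2n$ in the exponent. This is a net gain exactly when $\beta+2>\frac n2$, which is why the iteration must start at exponent $\frac n2+2$. With $\beta=(1+\frac2n)^k+\frac{n-4}{2}$ and $I_k=\int_{P_{r_k}}(1+|F_A|)^{(1+\frac2n)^k+\frac{n+2}{2}}$, this gives the nonlinear recursion $I_{k+1}\le C4^{k(1+\frac2n)}I_k^{1+\frac2n}$. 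In the limit it yields the non-homogeneous bound $|F_A(x_0,t_0)|\le C(1+\int_{P_1}|F_A|^{\frac n2+2})\le C(1+\int_{P_2}|F_A|^2)$. Squaring this into the stated form tacitly uses, as you observed, that $\int_{P_2}|F_A|^2\le C\varepsilon^{4/n}$ after rescaling.

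\textbf{Your route.} You run a linear Aronson--Serrin type iteration for $\partial_t u+\nabla^*\nabla u\le Vu$ with $V\in L^q$ and $q=\frac n2+2>\frac{n+2}{2}$. This yields the homogeneous mean value inequality $\sup_{P_{1/2}}|F_A|^2\le C(\Lambda)\int_{P_1}|F_A|^2$, which is sharper in form and scale-invariant; the additive $R_0^{-4}$ is then only cosmetic. The price is the interpolation bookkeeping you flagged, which works because $q'=\frac{n+4}{n+2}<\frac{n+2}{n}$, together with tracking polynomial growth in $p$. The paper's crude power counting avoids both.

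\textbf{A small slip.} The scale-invariant size of $V$ is $R_0^{2-\frac{n+2}{q}}\|V\|_{L^q(P_{R_0})}$, not $R_0^{2-\frac{2(n+2)}{q}}\|V\|_{L^q}$. This does not affect the argument.
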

\begin{proof} Without loss of generality, we assume that $R_0=1$ by  re-scaling.
For constants $\rho > 0$, $R > 0$ with $\rho <R\leq R_0=1$,
 let $\phi=\phi (x,t)$ be a cut-off function with support in  $B_{R}(x_0)\times [t_0-R^2, t_0]$ such that $\phi =1$ in $B_{\rho }(x_0)\times [t_0-\rho^2,
t_0]$, $|\nabla \phi|\leq \frac C{R-\rho}$, $|\partial_t \phi|\leq \frac 1 {(R-\rho)^2}$ and $|\phi|\leq 1$ in  $B_{R}(x_0)\times [t_0-R^2, t_0]$.

Multiplying \eqref{W2.1} by $(1+|F_A|)^{\beta+1}\phi^2$, similar computation as in \eqref{2.13} yields
\begin{equation}
\begin{aligned}
&\quad \frac{1}{\beta+2}\frac{d}{dt}
\int_{B_R(x_0)}
(1+|F_A|)^{\beta+2}\phi^2\,dv+
\frac{\beta+1}{2}
\int_{B_R(x_0)}
(1+|F_A|)^\beta
|\nabla|F_A||^2\phi^2\,dv
\\
&\leq
C\int_{B_R(x_0)}
(1+|F_A|)^{\beta+3}\phi^2\,dv+
\frac{C}{\beta+2}
\int_{B_R(x_0)}
(1+|F_A|)^{\beta+2}
\left(
|\phi\partial_t\phi|+|\nabla\phi|^2
\right)\,dv.
\end{aligned}
\end{equation}
Integrating in time, we get
\begin{equation}\label{2.20}
\begin{aligned}
&\sup_{t_0-R^2\leq t\leq t_0}
\int_{B_R(x_0)}
(1+|F_A|)^{\beta+2}\phi^2\,dv
+
\int_{P_R(x_0,t_0)}
\left|
\nabla\left(
(1+|F_A|)^{\frac{\beta+2}{2}}\phi
\right)
\right|^2\,dv\,dt
\\
&\quad+
\int_{P_R(x_0,t_0)}
(1+|F_A|)^{\beta+2}\phi^2\,dv\,dt
\\
&\leq
C(\beta+2)
\int_{P_R(x_0,t_0)}
(1+|F_A|)^{\beta+3}\phi^2\,dv\,dt\\
&\quad
+
C\int_{P_R(x_0,t_0)}
(1+|F_A|)^{\beta+2}
\left(
|\phi\partial_t\phi|+|\nabla\phi|^2
\right)\,dv\,dt.
\end{aligned}
\end{equation}
 Applying H\"older's and Sobolev's inequalities, we get
\begin{equation}\label{2.21}
\begin{aligned}
&\quad\int_{P_\rho(x_0,t_0)}
(1+|F_A|)^{(\beta+2)(1+\frac2n)}\,dv\,dt
\\
&\leq
C\left[
\sup_{t_0-R^2\leq t\leq t_0}
\int_{B_R(x_0)}
(1+|F_A|)^{\beta+2}\phi^2\,dv
\right]^{\frac2n}
\\
&\quad\cdot
\int_{P_R(x_0,t_0)}
\left[
\left|
\nabla\left(
(1+|F_A|)^{\frac{\beta+2}{2}}\phi
\right)
\right|^2
+(1+|F_A|)^{\beta+2}\phi^2
\right]\,dv\,dt.
\end{aligned}
\end{equation}
Combining \eqref{2.20} and \eqref{2.21}, we obtain
\begin{equation}\label{2.22}
\begin{aligned}
&\int_{P_\rho(x_0,t_0)}
(1+|F_A|)^{(\beta+2)(1+\frac2n)}\,dv\,dt
\\
&\leq C\Bigg[
(\beta+2)
\int_{P_R(x_0,t_0)}
(1+|F_A|)^{\beta+3}\phi^2\,dv\,dt
\\
&\qquad+
\int_{P_R(x_0,t_0)}
(1+|F_A|)^{\beta+2}
\left(
|\phi\partial_t\phi|+|\nabla\phi|^2
\right)\,dv\,dt
\Bigg]^{1+\frac2n}.
\end{aligned}
\end{equation}
Now, we can perform Moser's iteration. Choose $R=\frac 12 (1+2^{-k})$, $\rho =R_{k+1}=\frac 12   (1+2^{-1-k})$ and $\beta=\left(1+\frac{2}{n}\right)^k+\frac{n-4}{2}$ for any integers $k\geq 0$. From (2.22), we get
\begin{equation}\label{eq:lemma5-iteration}
\begin{aligned}
&\quad \int_{P_{\frac{1+2^{-k-1}}2}(x_0,t_0)}
(1+|F_A|)^{
\left(1+\frac2n\right)^{k+1}+\frac{n+2}{2}}
\,dv\,dt
\\
&\leq
C4^{k(1+\frac2n)}
\left[
\int_{P_{\frac{1+2^{-k}}2}(x_0,t_0)}
(1+|F_A|)^{
\left(1+\frac2n\right)^k+\frac{n+2}{2}}
\,dv\,dt
\right]^{1+\frac2n}.
\end{aligned}
\end{equation}
This implies that
\begin{equation}
\begin{aligned}
&\left[
\int_{P_{\frac{1+2^{-k}}2}(x_0,t_0)}
(1+|F_A|)^{
\left(1+\frac2n\right)^k+\frac{n+2}{2}}
\,dv\,dt
\right]^{\left(1+\frac2n\right)^{-k}}
\\
&\leq
C^{\sum_{j=0}^{k-1}
\left(1+\frac2n\right)^{-j-1}}
4^{\sum_{j=0}^{k-1}
j\left(1+\frac2n\right)^{-j}}
\int_{P_1(x_0,t_0)}
(1+|F_A|)^{\frac n2+2}\,dv\,dt
\\
&\leq
C\int_{P_1(x_0,t_0)}
(1+|F_A|)^{\frac n2+2}\,dv\,dt.
\end{aligned}
\end{equation}
Letting $k\rightarrow \infty$ and applying Lemma \ref{Lemma  4} we get
\begin{equation}
\begin{aligned}
|F_A(x_0,t_0)|
&\leq
C\left(
1+\int_{P_1(x_0,t_0)}
|F_A|^{\frac n2+2}\,dv\,dt
\right)
\\
&\leq
C\left(
1+\int_{P_2(x_0,t_0)}
|F_A|^2\,dv\,dt
\right).
\end{aligned}
\end{equation}
Restoring the scaling, we get
\begin{equation}
|F_A(x_0,t_0)|^2
\leq
C\left(
\frac1{R_0^4}
+
\frac1{R_0^{n+2}}
\int_{P_{2R_0}(x_0,t_0)}
|F_A|^2\,dv\,dt
\right).
\end{equation}
This proves the claim.
\end{proof}

Under the local bound  of $F_A$, we apply an idea of Hamilton  \cite{Ha} on the Ricci flow to establish local estimates of higher order derivatives:
\begin{lem}  \label{Lemma 6}   Let $A(t)$ be a smooth solution to the Yang-Mills flow (\ref{YMF}) in $M\times
 [0,T]$ with initial connection $A(0)=A_0$.   Assume that there is a sufficiently small constant $\varepsilon>0$ such that
\begin{eqnarray}\label{A-G}
\sup_{ t_0-4R_0^2\leq t\leq t_0}  \int_{B_{2R_0}(x_0)} |F_A(x,t)|^{n/2}dv \leq \varepsilon
\end{eqnarray}
 for any $x_0\in M$ and $t_0>4R_0^2$. Then   for any integer $l\geq 1$, we have
 \begin{equation}\label{2.28'}
\sup_{t_0-R_l^2\leq t\leq t_0}
\int_{B_{R_l}(x_0)}
\sum_{i=1}^{l}
|\nabla_A^iF_A|^{\frac{2l}{i}}\,dv
+
\int_{P_{R_l}(x_0,t_0)}
|\nabla_A^{l+1}F_A|^2\,dv\,dt
\leq
C_l\sum_{i=1}^{l}
R_0^{\,n-2l-\frac{4l}{i}},
\end{equation}
   for some constant  $C_l > 0$ independent of  $A$ and $R_0$, where  $R_l:=\frac 12 R_0+\frac {R_0}{2^{l+1}}$ for  each $l\geq 1$.
\end{lem}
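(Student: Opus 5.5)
We argue by induction on $l$, following Hamilton's derivative estimates for the Ricci flow. By scaling we may take $R_0=1$; the factors $R_0^{\,n-2l-4l/i}$ are then recovered from the scaling of $\nabla_A^iF_A$: it scales like length$^{-(2+i)}$, so $|\nabla_A^iF_A|^{2l/i}\,dv$ scales like $R_0^{\,n-2l-4l/i}$. The starting point is Lemma \ref{Lemma  5} combined with Lemma \ref{Lemma 4} and \eqref{A-G}. These give a pointwise bound $|F_A|\le C$ on $P_{3/2}(x_0,t_0)$ (after slightly shrinking radii, using $t_0>4R_0^2$ and the energy bound). In particular $\int_{P_{3/2}}|F_A|^2$ is controlled by a constant depending only on the energy.

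Commuting $\nabla_A$ with the evolution \eqref{W2.1} gives, for each $k\ge 0$,
\begin{equation*}
\Big(\partial_t+\nabla_A^*\nabla_A\Big)\nabla_A^kF_A=\sum_{i+j=k}\nabla_A^iF_A\#\nabla_A^jF_A+\sum_{j\le k}\nabla^{k-j}R_M\#\nabla_A^jF_A .
\end{equation*}
Here the time derivative of the connection, $\partial_tA=-D_A^*F_A=\nabla_AF_A\#1$, has been absorbed into the quadratic terms.

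\textbf{Base case $l=1$.} We test the equation for $F_A$ against $\nabla_A^*\nabla_A F_A\,\phi^2$, or equivalently differentiate $\int|\nabla_AF_A|^2\phi^2$ in time, with a cutoff $\phi$ supported in $P_{R_0}$ and equal to $1$ on $P_{R_1}$. The resulting terms are of the form $\int|\nabla_AF_A|^2|F_A|\phi^2$ and $\int F_A\#F_A\#\nabla^2_AF_A\,\phi^2$. Both are absorbed using $|F_A|\le C$ and Young's inequality, and the leftover $\int_{P}|\nabla_A F_A|^2$ is controlled by the $L^2$ gradient bound. That gradient bound is obtained by testing the $F_A$ equation with $F_A\phi^2$, as in Lemma \ref{Lemma 1}. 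The outcome is $\sup_t\int_{B_{R_1}}|\nabla_AF_A|^2+\int_{P_{R_1}}|\nabla_A^2F_A|^2\le C$.

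\textbf{Inductive step.} Assume the estimate holds up to level $l-1$ on the slightly larger cylinder $P_{R_{l-1}}$. For level $l$, consider the weighted quantity
\begin{equation*}
\Phi_l=\sum_{i=1}^l|\nabla_A^iF_A|^{2l/i}.
\end{equation*}
This is the Hamilton-type combination, homogeneous of scaling degree $4l+2l$. We compute $(\partial_t+\nabla^*\nabla)\Phi_l$ using the commuted equation above. Each summand contributes a good term $-c|\nabla_A^{i+1}F_A|^2|\nabla_A^iF_A|^{2l/i-2}$, which for $i=l$ is exactly $|\nabla_A^{l+1}F_A|^2$. The bad terms are products $\nabla_A^aF_A\#\nabla_A^bF_A\#\nabla_A^iF_A\,|\nabla_A^iF_A|^{2l/i-2}$ with $a+b=i$. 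We integrate against $\phi^2$ and control the lower-order factors by the induction hypothesis and $|F_A|\le C$. The top-order factors are handled by the Gagliardo--Nirenberg type interpolation
\begin{equation*}
\int|\nabla^iF|^{2l/i}\phi^{p}\le C\,\|F\|_{\infty}^{\,2l/i-2}\int|\nabla^lF|^2\phi^{q}+\text{(cutoff terms)},
\end{equation*}
which is Hamilton's interpolation lemma, valid with bounded $|F_A|$. This makes every bad term either absorbed by the good terms with a small coefficient, or bounded by a constant. Gradient hitting the cutoff loses a factor $(R_{l-1}-R_l)^{-1}\sim 2^{l}/R_0$, and this loss only affects the constant $C_l$. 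Integrating in time from $t_0-R_{l-1}^2$ to $t_0$ gives \eqref{2.28'}. The initial-time boundary term is handled either by the time cutoff or by the supremum bound at level $l-1$ together with interpolation.

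The main obstacle is the bookkeeping in the inductive step. We must ensure that every cubic term $\nabla^aF\#\nabla^bF\#\nabla^lF$ with $a+b=l$, and the term coming from $\partial_tA$ in the commutator, can be absorbed. The danger is the case $a=0$, $b=l$, which is handled by $|F_A|\le C$. We also need the cutoff errors to be controlled by lower-order quantities already estimated, which is where the mixed powers $2l/i$ in $\Phi_l$ are essential. Checking that each of these exponents closes under Young's inequality is the step I would verify most carefully.
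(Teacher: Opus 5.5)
Your outline is correct and uses the same ingredients as the paper: rescaling to $R_0=1$, the bound $|F_A|\le C$ from Lemma \ref{Lemma  5}, induction on $l$ through the commuted equation \eqref{2.32}, a cutoff vanishing at the initial time, and Hamilton's interpolation with $\|F_A\|_{L^\infty}$. The difference is which quantity you differentiate in time.

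\textbf{What the paper does.} It evolves only $\int_M|\nabla_A^lF_A|^2\phi^{2l}\,dv$ and never evolves the lower-order pieces. At each fixed time, \eqref{2.36} gives $\int|\nabla_A^iF_A|^{2l/i}\phi^{2l}\le C_l(1+\int|\nabla_A^lF_A|^2\phi^{2l})$. So the supremum bound on those pieces follows directly from the one on $\int|\nabla_A^lF_A|^2\phi^{2l}$. In that scheme the mixed exponents appear only in the H\"older step $\frac{i}{2l}+\frac{j}{2l}+\frac12=1$ for the cubic term $\nabla_A^iF_A\#\nabla_A^jF_A\#\nabla_A^lF_A$. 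They are not needed for the cutoff errors, which involve only $|\nabla_A^lF_A|^2$. Those errors are controlled by the level-$(l-1)$ bound on $\int_{P_{R_{l-1}}}|\nabla_A^lF_A|^2$, and this is the only place the induction hypothesis enters.

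\textbf{What your version buys.} Evolving the full $\Phi_l$ also closes. For $i<l$, the reaction terms $|\nabla_A^iF_A|^{2l/i-1}|\nabla_A^aF_A||\nabla_A^bF_A|$ with $a+b=i$ and $a,b\ge 1$ are bounded by $C\int\Phi_l\phi^2$, using H\"older with exponents $\frac{2l}{2l-i},\frac{2l}{a},\frac{2l}{b}$. The case $a=0$ uses $|F_A|\le C$. However, the extra evolution gains nothing:
\begin{itemize}
\item You still need the interpolation inequality to bound $\iint\Phi_l$ on a slightly larger cylinder for your cutoff errors.
\item Once you have that inequality, evolving the terms with $i<l$ is redundant.
\item You also pick up the nuisance of the non-smooth powers $(|\nabla_A^iF_A|^2)^{l/i}$ when $1<l/i<2$.
\end{itemize}

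\textbf{Two corrections.}
\begin{itemize}
\item $\Phi_l$ is not scale-homogeneous. The term $|\nabla_A^iF_A|^{2l/i}$ has weight $2l+4l/i$, which depends on $i$; your first paragraph says this correctly. This is why the right side of \eqref{2.28'} is a sum of different powers of $R_0$. What is homogeneous is the derivative count $2l$, and that is what makes the $L^\infty$-interpolation work.
\item The lower-order factors in the cubic terms are controlled by the level-$l$ interpolation, not by the induction hypothesis. The induction hypothesis only supplies the exponents $2(l-1)/a$, whereas H\"older needs $2l/a$.
\end{itemize}
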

\begin{proof}
Without loss of generality, we assume that $R_0=1$ by  re-scaling and $t_0=0$. By Lemma \ref{Lemma  5}, we have
\begin{equation}
\sup_{M\times[-2,0]}|F_A|\leq C.
\end{equation}
We will prove this lemma by induction. From Lemma \ref{Lemma 1} and Lemma \ref{Lemma 2}, we have the induction basis
\begin{equation}\label{eq:energy-base}
\sup_{-1\leq t\leq0}
\int_{B_1(x_0)}|F_A|^2\,dv
+
\int_{P_1(x_0,0)}
|\nabla_AF_A|^2\,dv\,dt
\leq C.
\end{equation}
 Recall the well-known Ricci identity that for all $\psi\in \Omega^{k}(Ad E)$, we have
\begin{equation}\label{2.30}
\nabla_{A, e_i}\nabla_{A, e_j}  \psi=  \nabla_{A, e_j}\nabla_{A, e_i}
 \psi+ ( R_M+  F_A)\#   \psi.
\end{equation}
Combining \eqref{W2.1} and \eqref{2.30}, we obtain
\begin{equation}\label{2.32}
\begin{aligned}
\frac{\partial \nabla_A^lF_A}{\partial t}
={}&
-\nabla_A^*\nabla_A(\nabla_A^lF_A)+
\sum_{\substack{i+j=l\\i,j\geq0}}
\nabla_A^iF_A\#\nabla_A^jF_A+
\sum_{\substack{i+j=l\\i,j\geq0}}
\nabla^iR_M\#\nabla_A^jF_A.
\end{aligned}
\end{equation}
Assuming the $l-1$ case hold, we have
\begin{equation}\label{2.33}
\int_{P_{R_{l-1}}(x_0,0)}
|\nabla_A^lF_A|^2\,dv\,dt
\leq C_{l-1}.
\end{equation}
Let $\phi(x,t)$ be a cutoff function supported in $B_{R_{l-1}}(x_0)$ satisfying
\begin{equation}
\begin{gathered}
0\leq\phi\leq1,
\qquad
\phi=1\quad\text{on }P_{R_l}(x_0,0),
\\
\phi(\cdot,-R_{l-1}^2)=0,
\qquad
|\nabla\phi|^2+|\partial_t\phi|\leq C_l.
\end{gathered}
\end{equation}
Using integration by parts and Young's inequality, we have the interpolation inequality
\begin{equation}\label{2.35}
\begin{aligned}
&\int_M
|\nabla_A^iF_A|^{\frac{2l}{i}}\phi^{2l}\,dv
\\
&\leq
C_l\|F_A\|_{L^\infty(M)}^{\frac{2l}{i}-2}
\int_M|\nabla_A^lF_A|^2\phi^{2l}\,dv+
C_l\|F_A\|_{L^\infty(M)}^{\frac{2l}{i}}
\int_M|\nabla\phi|^{2l}\,dv
\end{aligned}
\end{equation} for each $i$ with $1\leq i\leq l$.
Combining this with the bound obtained in Lemma \ref{Lemma  5}, we obtain
\begin{equation}\label{2.36}
\int_M
|\nabla_A^iF_A|^{\frac{2l}{i}}\phi^{2l}\,dv
\leq
C_l\left(
1+\int_M|\nabla_A^lF_A|^2\phi^{2l}\,dv
\right).
\end{equation}
For $i,j\geq 1$ with $i+j=l$, the generalized Holder's inequality and \eqref{2.36} yields
\begin{equation}
\begin{aligned}
&\int_M
|\nabla_A^iF_A|\,
|\nabla_A^jF_A|\,
|\nabla_A^lF_A|\phi^{2l}\,dv
\\
&\leq
\left(
\int_M|\nabla_A^iF_A|^{\frac{2l}{i}}
\phi^{2l}\,dv
\right)^{\frac{i}{2l}}
\left(
\int_M|\nabla_A^jF_A|^{\frac{2l}{j}}
\phi^{2l}\,dv
\right)^{\frac{j}{2l}}
\left(
\int_M|\nabla_A^lF_A|^2\phi^{2l}\,dv
\right)^{\frac12}
\\
&\leq
C_l\left(
1+\int_M|\nabla_A^lF_A|^2\phi^{2l}\,dv
\right).
\end{aligned}
\end{equation}
Multiplying \eqref{2.32} by $\nabla^l_A F_A \phi^{2l}$ and integrating by parts yields
\begin{equation}
\begin{aligned}
&\frac{d}{dt}
\int_M|\nabla_A^lF_A|^2\phi^{2l}\,dv
+
\int_M|\nabla_A^{l+1}F_A|^2\phi^{2l}\,dv
\\
&\leq
C_l\left(
1+\int_M|\nabla_A^lF_A|^2\phi^{2l}\,dv
\right)+
C_l\int_M|\nabla_A^lF_A|^2
\left(
\phi^{2l-2}|\nabla\phi|^2
+\phi^{2l-1}|\partial_t\phi|
\right)\,dv
\\
&\leq
C_l\left(
1+
\int_{B_{R_{l-1}}(x_0)}
|\nabla_A^lF_A|^2\,dv
\right).
\end{aligned}
\end{equation}
Intgerating in time and applying \eqref{2.33}, we obtain
\begin{equation}
\begin{aligned}
&\sup_{-R_{l-1}^2\leq t\leq0}
\int_M|\nabla_A^lF_A|^2\phi^{2l}\,dv+
\int_{-R_{l-1}^2}^{0}
\int_M|\nabla_A^{l+1}F_A|^2\phi^{2l}\,dv\,dt
\\
&\leq
C_l\left(
1+
\int_{P_{R_{l-1}}(x_0,0)}
|\nabla_A^lF_A|^2\,dv\,dt
\right)
\leq C_l.
\end{aligned}
\end{equation}
Since $\phi=1\quad\text{on }P_{R_l}(x_0,0)$, we get
\begin{equation}
\begin{aligned}
&\sup_{-R_l^2\leq t\leq0}
\int_{B_{R_l}(x_0)}
\sum_{i=1}^{l}
|\nabla_A^iF_A|^{\frac{2l}{i}}\,dv+
\int_{P_{R_l}(x_0,0)}
|\nabla_A^{l+1}F_A|^2\,dv\,dt
\leq C_l.
\end{aligned}
\end{equation}
Restoring the original scaling, we have
\begin{equation}
\begin{aligned}
&\sup_{t_0-R_l^2\leq t\leq t_0}
\int_{B_{R_l}(x_0)}
|\nabla_A^iF_A|^{\frac{2l}{i}}\,dv
\leq C_lR_0^{\,n-2l-\frac{4l}{i}},
\\
&\int_{P_{R_l}(x_0,t_0)}
|\nabla_A^{l+1}F_A|^2\,dv\,dt
\leq C_lR_0^{\,n-2l-4}.
\end{aligned}
\end{equation}
Summing over $i$, we prove  the desired inquality.
 \end{proof}

\section{Parabolic Formulation of  the Gauge Fixing Theorem}

We  apply  Uhlenbeck's results \cite {U2} to obtain a parabolic formulation of  the gauge fixing theorem. Firstly, we recall  Uhlenbeck's gauge fixing theorem \cite {U2}.

\begin{thm}\label{U-G}
   Let $D_A=d+A \in W^{1,\frac{n}{2}}$ be a connection  in the  unit ball $B^n\subset \R^n$  satisfying
 \[\int_{B^n} |F_A|^{n/2}dv \leq \varepsilon   \] for some sufficiently small $\varepsilon >0$.
 Then there exists a  gauge
transformation $S\in G$ such that $D_A$ is gauge-equivalent to
$D_{a}:=S^{-1}\circ D_A \circ S$   with $a=S^*(A)=S^{-1} dS+S^{-1}AS$
 satisfying
 \[d^*  a =0  \mbox{ in }B^n,  \quad  a \cdot \nu=0  \mbox { on } \partial  B^n,
 \]
  where $\nu$ is the outer normal of $\partial B^n$,  and
\begin{equation*}\label{U2}\int_{B^n}  |a|^{n/2}+|\nabla a |^{n/2}\,dv\leq C \int_{B^n} |F_{a}|^{n/2}\,dv. \end{equation*}
 \end{thm}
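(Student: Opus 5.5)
We follow Uhlenbeck's continuity method \cite{U2}. We first take $D_A$ smooth, or at least $A\in W^{1,p}$ for some $p>n/2$. For a general $A\in W^{1,\frac n2}$ we then approximate by smooth connections $A_k\to A$ in $W^{1,\frac n2}$ and pass to the limit in the uniform estimate at the end. For $\lambda\in[0,1]$ set $A_\lambda(x):=\lambda A(\lambda x)$ on $B^n$. This is the pull-back of $A|_{B_\lambda}$ under dilation, so by scale invariance of the $L^{n/2}$-norm of curvature
\[\int_{B^n}|F_{A_\lambda}|^{n/2}dv=\int_{B_\lambda}|F_A|^{n/2}dv\le\varepsilon .\]
Let $\mathcal U\subset[0,1]$ be the set of $\lambda$ for which there is a gauge $S_\lambda$ such that $a_\lambda=S_\lambda^*(A_\lambda)$ satisfies $d^*a_\lambda=0$ in $B^n$, $a_\lambda\cdot\nu=0$ on $\partial B^n$, and
\[\|a_\lambda\|_{W^{1,\frac n2}}\le 2C_0\|F_{a_\lambda}\|_{L^{n/2}},\]
where $C_0$ is a constant fixed below. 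Clearly $0\in\mathcal U$, taking $S=I$ since $A_0=0$. We show $\mathcal U$ is open and closed, hence equal to $[0,1]$.

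\textbf{A priori estimate.} Suppose $d^*a=0$ and $a\cdot\nu=0$. Since there are no nonzero harmonic $1$-forms on the ball with vanishing normal component, the Gaffney--Friedrichs inequality gives $\|a\|_{W^{1,\frac n2}}\le C_0\|da\|_{L^{n/2}}$. Writing $da=F_a-a\wedge a$ and using the Sobolev embedding $W^{1,\frac n2}\subset L^n$, we get
\[\|a\|_{W^{1,\frac n2}}\le C_0\|F_a\|_{L^{n/2}}+C_0\|a\|_{L^n}^2\le C_0\|F_a\|_{L^{n/2}}+C_1\|a\|_{W^{1,\frac n2}}^2 .\]
If $\|a\|_{W^{1,\frac n2}}\le 2C_0\varepsilon^{2/n}$ with $\varepsilon$ small, the quadratic term is absorbed, and we obtain the improved bound $\|a\|_{W^{1,\frac n2}}\le \tfrac32 C_0\|F_a\|_{L^{n/2}}$. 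This strict improvement over the defining inequality of $\mathcal U$ is what drives both openness and closedness.

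\textbf{Openness.} Near $\lambda_0\in\mathcal U$ we look for $S=S_{\lambda_0}e^{u}$, where $u$ is a $\mathfrak g$-valued function in $W^{2,p}$. The equation $d^*(S^*A_\lambda)=0$ with boundary condition $\partial_\nu u=-\big(S_{\lambda_0}^*A_\lambda\big)\cdot\nu$ on $\partial B^n$ is a nonlinear elliptic problem in $u$. Its linearisation at $u=0$, $\lambda=\lambda_0$, is $u\mapsto d^*D_{a_{\lambda_0}}u$ with the Neumann boundary condition. This operator is a small perturbation of the Neumann Laplacian, because $a_{\lambda_0}$ is small in $L^n$. It is therefore invertible on functions of mean zero; constants only conjugate $a$ by a constant element and can be discarded. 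The implicit function theorem then gives solutions for $\lambda$ near $\lambda_0$, depending continuously on $\lambda$. By continuity the bound stays below $2C_0\varepsilon^{2/n}$, so the improved estimate applies and $\lambda\in\mathcal U$. This Neumann problem is the $u_2$-step mentioned in the introduction. In Uhlenbeck's original proof it is preceded by a Dirichlet step $S_1=e^{u_1}$, $u_1|_{\partial B}=0$, which can be used to put $A$ in a preliminary good gauge.

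\textbf{Closedness.} Take $\lambda_k\to\lambda$ with $\lambda_k\in\mathcal U$. The $a_{\lambda_k}$ are bounded in $W^{1,\frac n2}$. The gauges satisfy $dS_k=S_ka_{\lambda_k}-A_{\lambda_k}S_k$ with $|S_k|$ bounded since $G$ is compact, so they are bounded in $W^{1,n}$, and then in $W^{2,\frac n2}$ after one more differentiation. Weak limits therefore exist and preserve the gauge relation, $d^*a=0$, $a\cdot\nu=0$, and, by lower semicontinuity together with strong $L^{n/2}$ convergence of the curvatures, the estimate. This is the step I expect to be the main obstacle. In the borderline space $W^{2,\frac n2}\subset W^{1,n}$ there is no compact embedding into $C^0$, so the products $S_k^{-1}dS_k$ and $S_k^{-1}A_{\lambda_k}S_k$ need care to pass to the limit. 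I would first run the whole argument for $A\in W^{1,p}$ with $p>n/2$, where $S_k$ are bounded in $W^{2,p}\subset C^0$ and the limits are straightforward. The resulting estimate involves only the $L^{n/2}$-norm of $F$, so it is uniform. Finally, approximating $A\in W^{1,\frac n2}$ by smooth $A_k$, the uniform $W^{1,\frac n2}$ bounds on $a_k$ and on the gauges give the theorem in the limit by the same weak-convergence argument.
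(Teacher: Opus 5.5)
\textbf{The gap is in the openness step.} The paper does not prove this statement; it quotes it from Uhlenbeck. The only account of her argument in the paper is in the proof of Theorem~\ref{Theorem 1.1}, which builds the Coulomb gauge as a product $e^{u_1}e^{u_2}$ of a Dirichlet gauge and a Neumann gauge. Your outline uses the same continuity method. The a priori estimate, the path $A_\lambda$ and the closedness argument are sound, but in the openness step you drop the Dirichlet gauge, and the step then fails as written.

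Write $\tilde A_\lambda=S_{\lambda_0}^*A_\lambda$ and $h=\tilde A_\lambda\cdot\nu$. This $h$ is in general nonzero on $\partial B^n$ for $\lambda\neq\lambda_0$.

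\emph{Wrong boundary condition.} The normal component of $a=e^{-u}de^u+e^{-u}\tilde A_\lambda e^u$ is $e^{-u}\partial_\nu e^u+e^{-u}he^u$. For nonabelian $\mathfrak g$ this differs from $\partial_\nu u+h$ by $-\tfrac12[u,\partial_\nu u]-[u,h]$ plus higher-order terms. So imposing $\partial_\nu u=-h$ does not give $a\cdot\nu=0$.

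\emph{The implicit function theorem does not apply.} With inhomogeneous Neumann data, the linearisation $v\mapsto(d^*D_{a_{\lambda_0}}v,\partial_\nu v)$ is not onto $L^p(B^n)\times W^{1-1/p,p}(\partial B^n)$ from mean-zero $W^{2,p}$ functions. Its range is the codimension-$\dim\mathfrak g$ subspace $Y_0=\{(f,g):\int_{B^n}f\,dx+\int_{\partial B^n}g\,d\sigma=0\}$. Your nonlinear map does not take values in $Y_0$.

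There are two ways to repair this.
\begin{itemize}
\item[(i)] Impose the exact nonlinear condition $a\cdot\nu=0$. By the divergence theorem $\int_{B^n}d^*a\,dx=-\int_{\partial B^n}a\cdot\nu\,d\sigma$, so $u\mapsto(d^*a,a\cdot\nu)$ does map into $Y_0$. On $Y_0$ the linearisation is an isomorphism once $\|a_{\lambda_0}\|_{L^n}$ is small. Then apply the implicit function theorem in the version that needs only continuous dependence on $\lambda$.
\item[(ii)] Do what Uhlenbeck does, as reproduced in the paper. First apply a gauge $e^{u_1}$ with $u_1=0$ on $\partial B^n$, built from the radial backward heat equation of Lemma~\ref{LP}, so that $(e^{u_1})^*\tilde A_\lambda-a_{\lambda_0}$ has vanishing normal component. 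Then $e^{u_2}$ solves the homogeneous Neumann problem of Lemma~\ref{Uh1}. There $a\cdot\nu=0$ is automatic, and ``invertible on mean-zero functions'' is exactly the right statement.
\end{itemize}
So the Dirichlet step is not an optional preliminary: it is what removes the boundary nonlinearity and the cokernel.

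\textbf{Smaller points.}
\begin{itemize}
\item The closedness issue you flag as the main obstacle is not one. Since $dS_k=S_ka_{\lambda_k}-A_{\lambda_k}S_k$, the gauges are bounded in $W^{1,n}$ and take values in the compact group $G$. After Rellich they converge a.e.\ with a uniform $L^\infty$ bound. Hence $S_k^{-1}dS_k\rightharpoonup S^{-1}dS$ and $S_k^{-1}A_{\lambda_k}S_k\to S^{-1}A_\lambda S$ follow from dominated convergence, even in the borderline space.
\item Your $W^{2,p}$ detour for closedness is also incomplete as written. Bounding $S_k$ in $W^{2,p}$ needs a uniform $W^{1,p}$ bound on $a_{\lambda_k}$, which $\mathcal U$ does not record. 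Uhlenbeck builds the companion estimate $\|a\|_{W^{1,p}}\le C\|F_a\|_{L^p}$ into her continuity set.
\item The restriction to $p>n/2$ is genuinely needed only in the openness step, where it makes $u\mapsto e^u$ smooth on $W^{2,p}\subset C^0$.
\item When you approximate $A\in W^{1,\frac n2}$ by smooth $A_k$, $\int|F_{A_k}|^{n/2}$ may slightly exceed $\varepsilon$. Prove the smooth case with threshold $2\varepsilon$.
\end{itemize}
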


The following is Lemma 2.7  of \cite {U2}, which is one of the key lemmas supporting the gauge fixing theorem.

\begin{lem}\label {Uh1}  For some $p>\frac n 2$, assume that $A\in W^{1,p}(B^n; \mathfrak{g})$  is a  given connection satisfying $d^*A=0$ in $B^n$ and
\[\|A\|_{L^{n}(B^n)}\leq k(n) \] with a sufficiently small $k(n)>0$.
Let $\lambda\in W^{1,p}(B^n; \mathfrak{g})$ be a one form satisfying $\lambda\cdot \nu=0$ on $\partial B^n$ and
\[\|\lambda\|_{W^{1,p}(B^n)}\leq \varepsilon \] for a sufficiently small constant $\varepsilon>0$.  Then
 there is a gauge transformation  $S=e^{u}\in W^{2,p}(B^n; G)$  such that $u\in \mathfrak{g}$ solves
\begin{equation}\label{PT}
d^*S^*(A+\lambda)=d^*(S^{-1}dS +S ^{-1}(A +\lambda   ) S ) =0  \end{equation}
 in $B^n$ with $\int_{B^n} u\,dx=0$ and $\partial_{\nu} u=0$ on $\partial B^n$.
\end{lem}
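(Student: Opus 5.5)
The statement to prove is Lemma \ref{Uh1} (Uhlenbeck's Lemma 2.7): given a Coulomb connection $A$ with small $L^n$ norm and a small perturbation $\lambda$ with $\lambda\cdot\nu=0$, we must find $u$ with Neumann data solving \eqref{PT}. I would use the implicit function theorem on Banach spaces. Fix $p>\frac n2$, so that $W^{2,p}\hookrightarrow C^0$ and $W^{1,p}\hookrightarrow L^{n}$, $W^{2,p}\cdot W^{1,p}\subset W^{1,p}$. Take the domain space
\[X=\Big\{u\in W^{2,p}(B^n;\mathfrak g):\ \partial_\nu u=0 \mbox{ on }\partial B^n,\ \int_{B^n}u\,dx=0\Big\}
\]
and the target $Y=\{f\in L^p(B^n;\mathfrak g): \int_{B^n} f\,dx=0\}$. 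I would then define
\[N(u,\lambda)=d^*\big(e^{-u}de^{u}+e^{-u}(A+\lambda)e^{u}\big).\]

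First I will check that $N$ maps $X\times W^{1,p}_\nu$ into $Y$, where $W^{1,p}_\nu$ denotes the one-forms with $\lambda\cdot\nu=0$. The mean-zero property follows from the divergence theorem: $\int d^*\omega=\pm\int_{\partial B^n}\omega\cdot\nu$. The boundary term vanishes because $\partial_\nu u=0$ kills the normal part of $e^{-u}de^u$, while $A\cdot\nu=0$ and $\lambda\cdot\nu=0$ kill the normal part of the conjugated form. For $A$ this uses that $A$ is in Coulomb gauge with normal boundary condition, as arises in Uhlenbeck's setting; if that is not assumed, I would add it as the implicit standing hypothesis. Smoothness of $N$ follows from the Banach-algebra property of $W^{2,p}$ for $p>n/2$ and the smoothness of $u\mapsto e^{u}$. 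Next, $N(0,0)=d^*A=0$.

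Next I would compute the linearization in $u$ at $(0,0)$:
\[L h=d^*(dh+[A,h])=\Delta h + d^*[A,h],\]
since $e^{-u}de^u\approx dh$ and $e^{-u}Ae^{u}\approx A+[A,h]$. The Neumann Laplacian $\Delta:X\to Y$ is an isomorphism with $\|h\|_{W^{2,p}}\le C\|\Delta h\|_{L^p}$. The perturbation satisfies
\[\|d^*[A,h]\|_{L^p}\le C\big(\|A\|_{L^n}\|\nabla h\|_{L^{p^*}}+\|\nabla A\|_{L^p}\|h\|_{L^\infty}\big).\]
This is the main obstacle. The first term is small by $k(n)$ and Sobolev embedding. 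The second term is not obviously small. However, $[A,h]$ with $d^*A=0$ gives $d^*[A,h]=\pm[A,dh]$ up to sign conventions, since $d^*A=0$ eliminates the derivative falling on $A$. Hence
\[\|d^*[A,h]\|_{L^p}\le \|A\|_{L^n}\|dh\|_{L^{np/(n-p)}}\le Ck(n)\|h\|_{W^{2,p}}.\]
For $k(n)$ small, $L$ is therefore invertible by a Neumann series. This is precisely where the Coulomb condition and smallness of $A$ enter.

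Finally, the implicit function theorem gives a neighbourhood $\|\lambda\|_{W^{1,p}}\le\varepsilon$ and a smooth map $\lambda\mapsto u(\lambda)\in X$ with $N(u(\lambda),\lambda)=0$. This is \eqref{PT} with $\int u=0$ and $\partial_\nu u=0$, and $S=e^u\in W^{2,p}$. To make $\varepsilon$ depend only on $n$ and $p$, not on $A$, I would run the argument quantitatively as a contraction mapping. Writing $N(u,\lambda)=L u + Q(u,\lambda)$, I would solve $u=-L^{-1}Q(u,\lambda)$ on a small ball in $X$. The uniform bound on $L^{-1}$ and the quadratic-plus-$\lambda$ estimates on $Q$ hold with constants controlled by $k(n)$ and by an a priori bound on $\|A\|_{W^{1,p}}$.
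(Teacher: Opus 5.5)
Your proposal is correct and is essentially the argument the paper relies on. The paper gives no proof of its own but quotes Uhlenbeck's Lemma 2.7, which is exactly this implicit-function-theorem argument: the linearization $\Delta + d^*[A,\cdot\,]$ is made invertible by the cancellation from $d^*A=0$ together with $\|A\|_{L^n}\le k(n)$. The a priori bound $\|u_2\|_{W^{2,p}}\le C\|\lambda\|_{W^{1,p}}$ that the paper derives in the proof of Theorem~\ref{Theorem 1.1} uses the same cancellation and Sobolev estimate. You are also right that the compatibility of the Neumann problem requires $A\cdot\nu=0$ on $\partial B^n$, which the statement omits but which holds in the paper's application since $\tilde A(0)=a(0)$. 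Your H\"older step also implicitly takes $p<n$, as the paper does.
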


To obtain a parabolic formulation of the gauge fixing theorem,  we will also require two lemmas.
\begin{lem}\label{Lemma 2.8} For each $u(x, t)\in \mathfrak{g}$, set $s(x, t)=e^{-u(x, t)}\circ \frac {de^{u(x, t)} }{dt}$.
If $|u(t)|$ is bounded, then
\begin{equation}\label{a}
  \abs{\nabla s}\leq C\abs{\nabla  \frac {d u}{dt}} +C \abs{\nabla u}\abs{ \frac {d u}{dt}}.
\end{equation}
Moreover, if $|u(t)|$ is sufficiently small, then
  \begin{equation}\label{a.1}
  \abs{\frac {\partial \nabla u}{ \partial t}}\leq C\abs{\nabla s(t)} + C \abs{\nabla u}\,\abs{\frac {\partial u}{\partial t}}.
  \end{equation}
  \end{lem}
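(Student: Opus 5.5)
The plan is to use the derivative formula for the exponential map in a Lie group. Write
\[
s = e^{-u}\frac{d}{dt}e^{u} = \int_0^1 e^{-\tau u}\,\frac{\partial u}{\partial t}\,e^{\tau u}\,d\tau = \Phi(\mathrm{ad}_u)\Big(\frac{\partial u}{\partial t}\Big),
\]
where
\[
\Phi(z)=\frac{1-e^{-z}}{z}=\sum_{k\ge 0}\frac{(-1)^k z^k}{(k+1)!}.
\]
This is an entire function. Since $G$ is compact we may regard $\mathfrak g$ as a matrix algebra, and the integral formula follows from differentiating $\sigma\mapsto e^{-\sigma u}\frac{d}{dt}e^{\sigma u}$ in $\sigma$.

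For \eqref{a}, I will differentiate the identity $s=\int_0^1 e^{-\tau u}\partial_t u\, e^{\tau u}\,d\tau$ in $x$. By the same formula, $\nabla e^{\pm\tau u}$ is $e^{\pm\tau u}$ times an expression linear in $\nabla u$, with coefficients that are smooth functions of $u$. When $|u|\le C$ these coefficients are bounded, and so is $|e^{\pm\tau u}|$. Hence
\[
|\nabla s|\le C|\nabla \partial_t u| + C|\nabla u|\,|\partial_t u|,
\]
and $\nabla\partial_t u=\partial_t\nabla u$ because $\nabla$ is independent of $t$.

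For \eqref{a.1}, I will invert the relation. When $|u|$ is small, $\|\mathrm{ad}_u\|\le C|u|$ is small, so the operator $\Phi(\mathrm{ad}_u)$ is close to the identity. It is therefore invertible, with inverse $\Psi(\mathrm{ad}_u)$, where $\Psi(z)=z/(1-e^{-z})$ is analytic near $0$. This gives
\[
\partial_t u=\Psi(\mathrm{ad}_u)s.
\]
Differentiating in $x$ yields
\[
\nabla\partial_t u=\Psi(\mathrm{ad}_u)\nabla s + \big(D\Psi(\mathrm{ad}_u)[\mathrm{ad}_{\nabla u}]\big)s.
\]
Both $|\Psi(\mathrm{ad}_u)|$ and $|D\Psi|$ are bounded for small $u$, so
\[
|\nabla\partial_t u|\le C|\nabla s|+C|\nabla u|\,|s|.
\]
Finally, I will use $|s|\le C|\partial_t u|$, which follows from the integral formula, to obtain \eqref{a.1}.

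The main obstacle is making the derivative of the operator-valued function $\Psi(\mathrm{ad}_u)$ rigorous and bounded. The simplest route is to work term by term in the power series. For $\Phi$ this is immediate because it is entire. For $\Psi$, the series has radius $2\pi$, which is why smallness of $|u|$ is required. Each term satisfies $|\nabla(\mathrm{ad}_u)^k|\le k\,C^k|u|^{k-1}|\nabla u|$. Summing these gives a convergent bound once $C|u|<2\pi$.
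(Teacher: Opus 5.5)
Your argument is correct and is essentially the paper's proof. The paper also expands $e^{u}$ as a power series and bounds $\nabla e^{-u}$ and $\nabla\frac{de^{u}}{dt}$ term by term by multiples of $e^{|u|}$ to obtain \eqref{a}. For \eqref{a.1} it writes $s=\partial_t u-\frac12[u,\partial_t u]+\frac16[u,[u,\partial_t u]]-\cdots$, which is exactly your $\Phi(\mathrm{ad}_u)\partial_t u$, differentiates this series, and absorbs the $O(|u|)\,|\nabla\partial_t u|$ terms for small $|u|$; that is the Neumann-series form of your inversion by $\Psi(\mathrm{ad}_u)$, and your integral representation just organizes the same bounds more compactly.
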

\begin{proof}

Note that
\[e^{u}=I + u + \frac {u^2}{2!} +\cdots + \frac {u^k}{k!}+\cdots .\]
Then
 \begin{equation}\label{a.2}  \frac {de^{u (t)} }{dt}=\frac {d u }{dt}+ \frac {\frac {d u}{dt} u(t)+ u(t)\frac {d u}{dt}
}{2!} +\cdots + \frac {\frac {d u}{dt}u^{k-1}+ \cdots +u^{k-1} \frac {d u}{dt}}{k!}+\cdots ,    \end{equation}
which implies
\[  \left |\frac {de^{u} }{dt}\right |\leq  \left |\frac {d u}{dt}\right | + \left |\frac {d u}{dt}\right | (e^{|u|}-1).\]
Similarly, we have
\[  \left |\nabla e^{-u}  \right |\leq \left |\nabla u\right | + \left |\nabla  u\right |   (e^{|u|}-1).\]
Using \eqref{a.2}, we find
\begin{eqnarray*}  \nabla \frac {de^{u} }{dt}  &=&
\nabla \frac {d u}{dt}+
\frac {\nabla \frac {d u}{dt} u + u \nabla \frac {d u}{dt}
}{2!}+\cdots + \frac {\nabla\frac {d u}{dt}u^{k-1} + \cdots +u^{k-1} \nabla\frac {d u}{dt}}{k!} +\cdots \\
&& +  \frac { \frac {d u}{dt}\nabla u + \nabla u   \frac {d u}{dt}
}{2!} +\cdots + \frac {\frac {d u}{dt} \nabla u^{k-1}+ \cdots +\nabla u^{k-1} \frac {d u}{dt}}{k!}+\cdots.   \end{eqnarray*}
For $k\geq 2$, we have
\[|\nabla u^{k-1}|=|\nabla uu^{k-2} +\cdots +u^{k-2}\nabla u|\leq (k-1)|\nabla u| |u|^{k-2}.\]
Then
 \begin{eqnarray*}   \left |\nabla \frac {de^{u } }{dt} \right |
 &\leq & \left |\nabla \frac {d u }{dt}\right |+ \left |\nabla \frac {d u }{dt}\right | \(e^{|u |}-1)+ |\frac {d u }{dt}| \,| \nabla u_   |(1+ |u |
+\cdots + \frac {|u |^{(k-2)}}{(k-2)!}+\cdots \) \\
&\leq &  \left |\nabla \frac {d u}{dt}\right |+ \left |\nabla \frac {d u}{dt}\right | (e^{|u|}-1)+ |\frac {d u}{dt}| \,| \nabla u |e^{|u|}.\end{eqnarray*}
Subsequently, \eqref{a} follows from the fact that
\[\nabla s (t)=\nabla e^{-u (t)}\circ \frac {de^{u (t)} }{dt}+e^{-u (t)}\circ \nabla\frac {de^{u (t)} }{dt}.\]

Using \eqref{a.2}, we get
\begin{equation}
\begin{aligned}
s
&= \left(I-u+\frac{u^2}{2!}-\cdots\right)
   \left(
     \frac{du}{dt}
     +\frac{1}{2!}\left(\frac{du}{dt}u+u\frac{du}{dt}\right)
     +\cdots
   \right) \\
&= \frac{du}{dt}
   -\frac{1}{2}\Bigl[u,\frac{du}{dt}\Bigr]
   +\frac{1}{6}\Biggl[u,\Bigl[u,\frac{du}{dt}\Bigr]\Biggr]
   -\cdots .
\end{aligned}
\end{equation}
This implies that
\begin{equation}\label{3.6}
\begin{aligned}
\nabla s
&= \nabla\!\left(\frac{du}{dt}\right)
   -\frac{1}{2}\Biggl(
     \left[\nabla u,\frac{du}{dt}\right]
     +\left[u,\nabla\!\left(\frac{du}{dt}\right)\right]
   \Biggr) \\
&\quad
   +\frac{1}{6}\Biggl(
     \left[\nabla u,\left[u,\frac{du}{dt}\right]\right]
     +\left[u,\left[\nabla u,\frac{du}{dt}\right]\right]
     +\left[u,\left[u,\nabla\!\left(\frac{du}{dt}\right)\right]\right]
   \Biggr)
   +\cdots .
\end{aligned}
\end{equation}
For $|u|$ sufficiently small, \eqref{3.6} implies \eqref{a.1}.
\end{proof}

\begin{lem}\label{LP} Let $f \in W^{1,p}(S^{n-1}\times [0,1])$ be a given function with $p>1$. Assume that $v=v(\theta ,r)$ is a solution of the heat equation on $S^{n-1}\times [0,1]$ satisfying
\begin {equation*}
 \partial_r v=\Delta_{S^{n-1}} v +f
\end{equation*} with $v(\theta, 1)=0$ on $S^{n-1}$.
Then we have
 \begin{equation}\label{et1}
 \|  \varphi v\|_{W^{1,p}(S^{n-1}\times [0,1])}\leq C\|f\|_{L^p(S^{n-1}\times [0,1])}
\end{equation} and
\begin{equation}\label{et2}
 \| \varphi v\|_{W^{2,p}(S^{n-1}\times [0,1])}\leq C\| f\|_{W^{1,p}(S^{n-1}\times [0,1])},
\end{equation}
where  $\varphi (r)$  is a smooth cut-off function in $[0,1]$ with $\varphi (r)=1$ near $1$ and $\varphi(r)=0$ for $[0,\eta]$ and some small constant $\eta >0$.
\end{lem}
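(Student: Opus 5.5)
The plan is to work with the equation exactly as stated: $v$ solves the forward equation $\partial_r v=\Delta_{S^{n-1}}v+f$ in the variable $r$, and the only side condition is $v(\cdot,1)=0$. I would localise with the cut-off, setting $w=\varphi v$. Since $\varphi\equiv 0$ on $[0,\eta]$ and $\varphi\equiv1$ near $r=1$, $w$ solves
\begin{equation*}
\partial_r w=\Delta_{S^{n-1}}w+\varphi f+\varphi' v \quad\mbox{on } S^{n-1}\times[\eta,1],\qquad w(\cdot,\eta)=0,\quad w(\cdot,1)=0 .
\end{equation*}
The point of the cut-off is that $w$ has zero data at the \emph{initial} level $r=\eta$, so $w$ solves a well-posed forward Cauchy problem on the compact manifold $S^{n-1}$. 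Standard $L^p$ maximal regularity for the heat operator on $S^{n-1}\times[\eta,1]$ then gives
\begin{equation*}
\|w\|_{W^{2,1}_p}\le C\big(\|\varphi f\|_{L^p}+\|\varphi' v\|_{L^p}\big),
\end{equation*}
and the same estimate differentiated once in $\theta$ and $r$ gives
\begin{equation*}
\|w\|_{W^{3,1}_p}\le C\big(\|\varphi f\|_{W^{1,p}}+\|\varphi' v\|_{W^{1,p}}\big).
\end{equation*}
This controls $\|\varphi v\|_{W^{1,p}}$ and $\|\varphi v\|_{W^{2,p}}$ in terms of $f$ together with the commutator term $\varphi'v$. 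The commutator is supported in the collar $\{\eta\le r\le 2\eta\}$.

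The remaining task, and the step I expect to be the main obstacle, is to bound $v$ on that collar by $f$ alone, using the condition $v(\cdot,1)=0$. I would expand in eigenfunctions of $\Delta_{S^{n-1}}$ with eigenvalues $\lambda_k\ge0$. The modes satisfy $v_k'=-\lambda_k v_k+f_k$ with $v_k(1)=0$, so
\begin{equation*}
v_k(r)=-\int_r^1 e^{\lambda_k(s-r)}f_k(s)\,ds .
\end{equation*}
For each fixed $k$ this gives $|v_k(r)|\le e^{\lambda_k}\int_r^1|f_k|$, which handles the finitely many low modes with a constant depending only on $\eta$ and $n$. The high modes carry the growing factor $e^{\lambda_k(s-r)}$, because the condition is imposed at the \emph{end} of the $r$-interval. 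For these I would first try two things, in order. The first is an energy identity for $w$ using both boundary conditions: integrating $\frac12\partial_r|w|^2$ from $\eta$ to $1$ gives
\begin{equation*}
\int|\nabla_\theta w|^2=\int w\,(\varphi f+\varphi'v),
\end{equation*}
and I would combine this with an absorption argument in which $\eta$ is chosen small. The second is to use that $v$ is a genuine solution on all of $[0,1]$, so that $v$ is finite on the collar mode by mode, and to try to extract a uniform bound from backward uniqueness and a three-cylinder or log-convexity estimate for $\int_{S^{n-1}}|v|^2$ in $r$. If neither closes with a constant depending only on $f$, that signals that the estimate needs either the additional control of $v$ near $r=\eta$ or the time-reversed reading of the equation. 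I would isolate this as the single point where the argument must be checked against the paper's application.

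Granting the collar bound, the remaining steps are routine. Inserting it into the maximal-regularity estimate gives \eqref{et1}. For \eqref{et2} I would differentiate the equation in $\theta$ (a first-order operator commuting with $\Delta_{S^{n-1}}$ up to lower-order terms) and in $r$, and apply the same localisation to $\partial w$. This uses $f\in W^{1,p}$ and gains one derivative over \eqref{et1}, and absorbing the lower-order commutators with \eqref{et1} yields \eqref{et2}.
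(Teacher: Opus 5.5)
Your reduction to the collar is sound, and it is also the first step of the paper's proof. But the collar bound is not merely hard: with the equation read as you read it (forward in $r$, $\Delta_{S^{n-1}}\le 0$, condition at the terminal level $r=1$), it is false. So are \eqref{et1} and \eqref{et2}, and your own mode formula proves this.

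Choose $\delta>0$ with $\varphi\equiv 1$ on $[1-4\delta,1]$. Take $g\in C^\infty_c((1-\delta,1))$ with $g\ge 0$ and $\int g=1$, and a spherical harmonic $e_k$ with $-\Delta_{S^{n-1}}e_k=\lambda_k e_k$ and $\|e_k\|_{L^p(S^{n-1})}=1$. Then $f=g(r)e_k(\theta)$ is smooth, and
\begin{equation*}
v(\theta,r)=v_k(r)e_k(\theta),\qquad v_k(r)=-\int_r^1 e^{\lambda_k(s-r)}g(s)\,ds
\end{equation*}
is a smooth solution on all of $S^{n-1}\times[0,1]$ with $v(\cdot,1)=0$. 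For $r\in[1-3\delta,1-2\delta]$, where $\varphi=1$, we have $s-r\ge\delta$ on the support of $g$. Hence $|v_k(r)|\ge e^{\delta\lambda_k}$ and $\|\varphi v\|_{L^p}\ge \delta^{1/p}e^{\delta\lambda_k}$, while $\|f\|_{W^{1,p}}$ grows only polynomially in $\lambda_k$. So not even $\|\varphi v\|_{L^p}\le C\|f\|_{W^{1,p}}$ holds.

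Neither of your two attempts can therefore close. In the energy identity, the collar term $\int\varphi\varphi'|v|^2$ carries no small factor, and shrinking $\eta$ does nothing against $e^{\lambda_k(1-r)}$. Quantitative backward uniqueness (log-convexity of $\|v(r)\|_{L^2(S^{n-1})}$) only interpolates between $r=1$ and an earlier level. It needs exactly the extra control of $v$ near $r=\eta$ that you suspected.

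The paper fills the collar step by your second route. It argues by contradiction, normalises $\|\tilde v_k\|_{L^p(S^{n-1}\times[\eta,1])}=1$ with $\tilde f_k\to 0$, extracts a limit, and kills it by backward uniqueness. The example shows where this breaks. The parabolic estimate bounds only $\varphi\tilde v_k$, which says nothing at $r=\eta$ where $\varphi=0$. Interior estimates give compactness only on $[\eta',1]$ for $\eta'>\eta$. The normalised $\tilde v_k$ are proportional to $e^{-\lambda_k r}$ on $[\eta,1-\delta]$, so their mass concentrates in a layer of width $O(\lambda_k^{-1})$ at $r=\eta$. The limit is $v_\infty=0$, and there is no contradiction.

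With this sign the problem is also not solvable for general $f$, so it cannot be what the construction of $u_1$ in Section 3 needs. The consistent reading runs the heat flow inward from $r=1$: $\partial_r v=-\Delta_{S^{n-1}}v+f$ with $\Delta_{S^{n-1}}\le 0$, or equivalently $\Delta_{S^{n-1}}$ taken to be the nonnegative Laplacian. Then $v(\cdot,1)=0$ is zero initial data, and maximal $L^p$-regularity gives $\|v\|_{W^{2,1}_p(S^{n-1}\times[0,1])}\le C\|f\|_{L^p}$. This yields \eqref{et1} at once, with no collar estimate; $\varphi$ only keeps you away from $r=0$.

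Even in that reading your plan for \eqref{et2} needs repair. The $\theta$-derivatives go through as you describe. However, $\partial_r v$ has the nonzero initial value $f(\cdot,1)$ at $r=1$, so $\partial_r^2 v\in L^p$ does not follow from $f\in W^{1,p}$. For $p=2$, the choice $f=e^{-\sqrt{\lambda_k}(1-r)}e_k$ gives $\|\partial_r^2(\varphi v)\|_{L^2}/\|f\|_{W^{1,2}}\gtrsim\lambda_k^{1/4}$.
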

\begin{proof} This lemma was mentioned by  Uhlenbeck \cite {U2} without details, so for completeness we give a proof here. Note that
\begin {equation}\label{et3}
 \partial_r (\varphi v)= \Delta_{S^{n-1}} (\varphi v )+\varphi f+\partial_r \varphi v.
\end{equation}
Following from standard $L^p$-estimates of parabolic equations (e.g. \cite{LSU}), we have
\begin{eqnarray*}
 &&\int_{S^{n-1}\times [0,1]}  |\partial_r (\varphi v)|^p + |\nabla^2_{\theta} (\varphi v)|^p + |\nabla_{\theta} (\varphi v)|^p d\theta dr\\
 &\leq& C \int_{S^{n-1}\times [0,1]}|\varphi f+\partial_r \varphi v|^pd\theta dr\\
 &\leq& C\int_{S^{n-1}\times [0,1]}| f|^p d\theta dr + C\int_{S^{n-1}\times [\eta,1]} |  v|^pd\theta dr.
\end{eqnarray*}
Then we claim that there exists constant $C > 0$ such that
\begin{eqnarray*}
  \int_{S^{n-1}\times [\eta,1]} |  v|^pd\theta dr \leq C\int_{S^{n-1}\times [0,1]}| f|^p d\theta dr.
\end{eqnarray*}
If not, then there exists a sequence of $k$ and solutions $v_k$ of the heat equation
\begin {equation*}
 \partial_r v_k=\Delta_{S^{n-1}} v_k +f_k
\end{equation*} with $v_k(\theta, 1)=0$ on $S^{n-1}$ such that
\begin {equation*}
 \int_{S^{n-1}\times [\eta,1]} |  v_k|^pd\theta dr \geq k\int_{S^{n-1}\times [0,1]}| f_k|^p d\theta dr.
\end{equation*}
Let $$\tilde v_k :=\frac {v_k}{\|v_n\|_{L^p(S^{n-1}\times [\eta,1])}}, \quad \tilde f_k :=\frac {f_k} {\|v_k\|_{L^p(S^{n-1}\times [\eta,1])}}.$$
which implies that $\|\tilde v_k\|_{L^p(S^{n-1}\times [\eta,1])}=1$ and $\tilde v_k$ satisfies
\begin {equation*}
 \partial_r \tilde v_k=\Delta_{S^{n-1}} \tilde v_k +\tilde f_k
\end{equation*} with $\tilde v_k(\theta, 1)=0$ on $S^{n-1}$. Noting that $\|\tilde f_k\|_{L^p(S^{n-1}\times [\eta,1])}\leq 1/k$,  there exists a function $v_{\infty}$ such that as $k\to\infty$, $\partial_r \tilde v_k$  converges to $\partial_r \tilde v_{\infty}$ and
$\nabla^2_{\theta}\tilde v_k$ converges to $\nabla^2_{\theta} v_{\infty}$ weakly in $L^{p}(S^{n-1}\times [\eta,1])$. By the Sobolev compact imbedding theorem (e.g. \cite{LSU}), $\tilde v_k$  converges  to $v_{\infty}$ strongly in $L^{p}(S^{n-1}\times [\eta,1])$, subsequently
\[\|v_{\infty}\|_{L^p(S^{n-1}\times [\eta,1])}=1.\] Furthermore,  $v_{\infty}$ also satisfies
\begin {equation*}
 \partial_r \tilde v_{\infty}=\Delta_{S^{n-1}} \tilde v_{\infty}
\end{equation*} with $\tilde v_{\infty}(\theta, 1)=0$ on $S^{n-1}$. Following from the backward uniqueness of the heat equation,  $\tilde v_{\infty}$ must be zero in $S^{n-1}\times [\eta,1]$.
This contradicts the fact that  $\|v_{\infty}\|_{L^p(S^{n-1}\times [\eta,1])}=1$. Therefore, the result follows, and (\ref{et1}) holds. Similarly, (\ref{et2}) can be demonstrated by differentiating (\ref{et3}) with respect to $r$.

\end{proof}

We can now provide a proof of the parabolic formulation of the gauge fixing theorem specified in Theorem \ref {Theorem 1.1}.

\begin{proof}[Proof of Theorem   \ref{Theorem 1.1}]

Without loss of generality assume that  $U=B^n$  and  $D_{A}=d+A$ in $U$. Using (\ref {small-condition}) at $t=0$,  it follows from   Uhlenbeck's
gauge fixing theorem in \cite {U2} that there exists a smooth gauge
transformation  $S_0=S(0)$ such that $D_{A(0)}$ is gauge-equivalent to a connection
$D_{a(0)}=S_0^*(D_{A(0)})=d+a(0)$ satisfying
\[ d^* a (0)=0\quad \mbox{in }U,\quad a(0)\cdot \nu=0\mbox { on }\partial  U\]
and
\[\int_{U}{|a (0)|^p}+|\nabla a(0)  |^p\,dv\leq
C(p)\int_{U} |F_{a(0)}|^p \,dv
\] for any $p\in [ \frac n2,n)$.

Note that $A$ is smooth in $M\times [0, t_1]$.  For each $p\in (n/2,n)$ and   a small $\varepsilon>0$
 there exists a  constant $\delta >0$, depending on the connection $A$ and $t_1$, such that for all $t, t'\in [0, t_1]$ with $|t-t'|\leq \delta$,
we have
\begin{eqnarray}\quad\label{condition}\int_{\bar U}|\nabla (A(t)- A(t'))|^p+|A(t)- A(t')|^p\,dv\leq \varepsilon^p.
 \end{eqnarray}

Now, we follow the procedure of Uhlenbeck \cite {U2} by fixing a Coulomb gauge for $t\in [0, \delta]$.
Through the gauge transformation $S_0$, the new connection $D_{\tilde A(t)}=S^*(0)(D_{A(t)})=d+\tilde A(t)$,
with $\tilde A (t)=S_0^{-1}d S_0+ S_0^{-1}A(t)S_0$, is a smooth solution the Yang-Mills flow in $\bar U\times [0,t_1]$ also.   Although  $\tilde A(0)=a(0)$ satisfies the boundary condition of $a(0)\cdot\nu =0$ on $\partial U$,
$\tilde A(t)$ may not satisfy the boundary condition of $\tilde A (t)\cdot\nu =0$ on $\partial U$ for $0<t\leq t_1$, so we cannot  apply Lemma   \ref{Uh1} to fix a Coulomb gauge for  $\tilde A(t)$  for $t\in [0, \delta]$. In order to rectify this issue of $\tilde A (t)$, it follows from Lemma 2.6 of \cite {U2}   that   there
exists gauge transformations $e^{u_1(t)}\in G$ such that
\[ D_{a_1(t)}=(e^{u_1(t)})^* (D_{\tilde A(t)})=e^{-u_1(t)}\circ (d+\tilde A(t))\circ e^{u_1(t)},\]
where $a_1(t):=\tilde A(0) +  \lambda (t)$ and
 \begin{equation} \label{(3.6)}\lambda (t) =-\tilde A(0)+e^{-u_1(t)}de^{u_1(t)}+ e^{-u_1(t)}(\tilde A(t))e^{u_1(t)}.
 \end{equation}
In fact,   $u_1(t)=\varphi (r) \tilde  v( t)$ is chosen with a smooth cut-off function $\varphi (r)$  in $[0,1]$ with $\varphi (r)=1$ near $1$ and $\varphi(r)=0$ for $[0,\eta]$ with some small constant $\eta >0$ and $\tilde v$ is the solution of
 \begin{equation}\label {P} \left(\frac {\partial }{\partial r} -\Delta_{S^{n-1}}\right)\tilde v= x\cdot (\tilde
A(0)-\tilde A(t))\quad \mbox{for $(r,\theta)\in [0,1]\times S^{n-1}$}\end{equation}
 with   $\tilde v(1,\theta)=0$ for all $\theta\in S^{n-1}$.
 Thus, we have
$u_1(t)=0$ and  $de^{u_1(t)}=d u_1(t)$ on $\partial U$ for all $t\in [0, \delta]$. Then using (\ref{(3.6)})-(\ref  {P}), we obtain
\begin{equation*}  \lambda (t)\cdot \nu = (d u_1(t)+\tilde
A(t)-\tilde A(0))\cdot \nu = \frac {\partial }{\partial r}\tilde v- x\cdot (\tilde
A(t)-\tilde A(0))=0 \quad \mbox{on $\partial
U$}.\end{equation*} Therefore, the  connection $a_1(t)$ satisfies the required boundary condition $a_1(t)\cdot \nu =0$ on $\partial U$.

Noting that  $\tilde A (t)=S_0^{-1}d S_0+ S_0^{-1}A(t)S_0$,   we have
 \[\tilde A (t)- \tilde A(0)= S_0^{-1}(A(t)-A(0))S_0.\]
 Using (\ref {condition}),  we determine
\begin{eqnarray}\quad\label{condition1} \| \tilde A (t)-\tilde A(0) \|_{W^{1,p}(U)}=\| A (t)-A(0) \|_{W^{1,p}(U)}\leq  \varepsilon
 \end{eqnarray} for any $t\in[0,  \delta]$.

Applying Lemma \ref {LP} and the Sobolev inequality to \eqref{P}, we have
\begin{equation}\label{Lp1}
\left (\int_U |\nabla u_1|^q(t)\,dv \right )^{1/q}\leq C\|\tilde A (t)-\tilde A(0)\|_{L^{q}(U)}\leq C \|\tilde A (t)-\tilde A(0)\|_{W^{1,p}(U)}
\end{equation} for $q=\frac {np}{n-p}>n$. From the Sobolev imbedding theorem and \eqref{condition}, $|u_1(t)|$ can be made sufficiently small for $t\in[0, \delta]$.
Furthermore, differentiating equation (\ref{P}) in $t$ yields
\[\frac {\partial u_1(t)}{\partial t} =\varphi \frac {\partial \tilde  v(t)}{\partial t} = -\varphi (\frac {\partial }{\partial r}
-\Delta_{S^{n-1}})^{-1} (x\cdot \frac {\partial \tilde A}{\partial
t}). \]
 Applying the $L^p$-estimate in Lemma \ref {LP} again, we have
\[ \int_U |\nabla \frac {\partial u_1(t)}{\partial t} |^2\,dv
\leq C  \int_U|\frac {\partial A}{\partial t}|^2(\cdot ,t)\,dv
\leq C \int_U|\nabla_A F_A|^2(\cdot ,t)\,dv\] for any $t\in [0,\delta]$.

From \eqref{(3.6)}, we deduce that
\[D_{a_1(t)}= d+  a_1(t)=d+\tilde A(0) +  \lambda (t)=d+e^{-u_1(t)}de^{u_1(t)}+ e^{-u_1(t)}(\tilde A(t))e^{u_1(t)}\] satisfies
\begin{eqnarray}\label{good1}
    \pfrac{ a_1}{t}=-D_{a_1}^*F_{a_1}+D_{a_1}   s_1,
    \end{eqnarray}
where $s_1= S_1^{-1}\circ  \frac d {dt} S_1$ and $S_1(t)=e^{u_1(t)}$.

Following from Lemma \ref{Lemma 2.8}, we have
\[|\nabla s_1(t)|\leq C \left|\nabla \frac {\partial u_1}{dt}\right|+C \left|\nabla u_1\right|\,\left|\frac {\partial u_1}{dt}\right|\]
for all $t\in [0, \delta]$ and for a sufficiently small $\delta>0$. Using the Sobolev inequality and noting that $u_1(t)=0$ on $\partial U$, we obtain
\begin{eqnarray*}
\int_U |\nabla s_1(t) |^2\,dv&\leq &  C \int_U \left|\nabla \frac {\partial u_1}{dt}\right|^2\,dv + \left (\int_U|\nabla u_1|^n\,dv\right )^{2/n}\left(\int_U\left|\frac {\partial u_1}{dt}\right|^{\frac {2n}{n-2}}\,dv \right)^{\frac {n-2}n} \\
 &\leq&  C \int_U \left|\nabla \frac {\partial u_1}{dt}\right|^2\,dv\leq  C \int_U|\nabla_A F_A|^2\,dv
\end{eqnarray*}
since
$\int_U |\nabla u_1|^n\,dv$ is uniformly bounded for all $t\in [0,\delta]$ according to \eqref{Lp1}.

Now,  we fix a Coulomb gauge for $A(t)$ with $t\in [0, \delta]$, then applying the $L^p$-estimate in Lemma \ref {LP} again, we have
\[|u_1(t)|\leq C\|u_1(t)\|_{W^{2,p}(U)}\leq C\|\tilde A(t)-\tilde A(0)\|_{W^{1,p}(U)}\leq  C\varepsilon\]
for all $t\in [0,\delta ]$. We note that
\begin{equation*} \lambda (t) =e^{-u_1(t)}\tilde A(0)e^{u_1(t)}-\tilde A(0)+e^{-u_1(t)}de^{u_1(t)}+ e^{-u_1(t)}(\tilde A(t)-\tilde A(0))e^{u_1(t)}.
 \end{equation*}
from which we determine that
\[\|\lambda (t)\|_{W^{1,p}(U)}\leq  C\varepsilon.\]

Using Lemma  \ref {Uh1} with a  sufficiently small constant $\varepsilon>0$,
 there exists $u_2(t)\in W^{2,p}(U)$ with
 $\nabla u_2\cdot
\nu =0$ on $\partial U$ and $\int_U u_2(t)\,dv=0$ for all $t\in
[0, \delta]$
 such that the gauge transformation  $S_2(t)=e^{u_2(t)}\in W^{2,p}(U)$   solves
\begin{equation}\label{small1}
d^*a=d^*(S_2^{-1}dS_2+S_2^{-1}(\tilde A(0)+\lambda (t) ) S_2) =0 \quad \mbox{in } U \end{equation}
  with $a\cdot\nu=0$ on $\partial U$,
where $D_a=d+a=(S_1S_2)^*(D_{\tilde A})=S_2^*(S_1^*(D_{\tilde A}))$.  It was noted by Uhlenbeck, in Lemma 2.7 of \cite {U2}, that $u_2(t)$  smoothly depends on $\lambda(t)$, so we can choose the norm
   $\|\nabla u_2(t)\|_{W^{2,p}(U)}$ to be sufficiently small since  $\|\lambda (t)\|_{W^{1,p}(U)}$ is small. Following from the Sobolev inequality, $|u_2(t)|$ can be made sufficiently small for $t\in [0, \delta]$.
 We verify this directly, note that the equation (\ref {small1}) is equivalent to
\begin{equation}\label{u_2}
-d^*  du_2 =d^*[(e^{u_2})^{-1}d e^{u_2(t)}- du_2] + \nabla e^{u_2} \#(\tilde A(0)+ \lambda (t)) \# e^{u_2} +e^{-u_2}d^*\lambda (t)   e^{u_2}.
\end{equation}
 Applying the $L^p$-estimate in (\ref{u_2}) and then  using both the H\"older and the Sobolev inequalities,  we have
\begin{eqnarray*}\label{small3}
&&\|u_2(t)\|^p_{W^{2,p}(U)} \leq  C\|\lambda (t) \|^p_{W^{1,p}(U)}+C \int_U|\nabla u_2(t)|^p( |\tilde A(0)|^p+|\lambda (t)|^p)\,dv\\
&\leq & C\|\lambda (t) \|^p_{W^{1,p}(U)}+ C\left (\int_U |\nabla u_2|^{\frac {pn}{n-p}}\,dv\right )^{\frac {n-p}n} \left (\int_U|\tilde A(0)|^n+ |\lambda (t)|^n \,dv\right )^{\frac {p}n}.
\end{eqnarray*}
Note that $\|\tilde A(0)\|_{L^n(U)}\leq C\varepsilon\leq k(n)$ and $\|\lambda (t)\|_{W^{1,p}(U)}\leq C\varepsilon$. Then for a sufficiently small $\varepsilon >0$, we have
\begin{equation}\label{small4}
\|u_2(t)\|_{W^{2,p}(U)} \leq   C\|\lambda (t) \|_{W^{1,p}(U)}\leq C \varepsilon
\end{equation}
for $p> n/2$.

   From Lemma \ref{Lemma 2.8}, we have
\[C^{-1} \left|\frac {\partial u_2}{\partial t} \right|\leq |s_2|\leq C \left|\frac {\partial u_2}{ \partial t} \right|\] and
   \[\left|\nabla \frac {\partial u_2}{ \partial t}\right|\leq C|\nabla s_2(t)| +C |\nabla u_2|\,\left|\frac {\partial u_2}{ \partial t}\right|\]
   for  $t\in [0,\delta]$.
   Since $\int_Uu_2(t)\,dv=0$, we
have $\int_U \frac {\partial u_2}{ \partial t}\,dv=0$, and because $\int_U   |\nabla u_2(t)|^n\,dv$ can be chosen to be sufficiently small for $t\in [0,\delta]$, we have
 \begin{eqnarray*} &&\int_U\left|\nabla \frac {\partial u_2}{ \partial t}\right|^2\,dv\leq C\int_U |\nabla s_2(t)|^2+ |\nabla u_2|^2\,\left|\frac {\partial u_2}{\partial t}\right|^2\,dv\\
 &&\leq C\int_U |\nabla s_2(t)|^2\,dv + C\left (\int_U   |\nabla u_2|^n\,\right )^{2/n}\left (\int_U \left|\frac {\partial u_2}{\partial t}\right|^{\frac {2n}{n-2}} \,dv\right )^{\frac {n-2}n}  \\
 &&\leq C\int_U |\nabla s_2(t)|^2\,dv+\frac 1 2 \int_U\left|\nabla \frac {\partial u_2}{ \partial t}\right|^2\,dv.\end{eqnarray*}
It implies that
   \[\int_U | s_2(t)|^2\,dv\leq C \int_U \left|\frac {\partial u_2}{\partial t} \right|^2\,dv \leq C\int_U \left|\nabla \frac {\partial u_2}{ \partial t}\right|^2\,dv\leq C\int_U |\nabla s_2(t)|^2\,dv. \]
Let $s_2(t)=S_2^{-1}\circ \frac {d S_2}{dt}$ with $S_2=e^{u_2}$, then
$D_a=S_2^* (D_{a_1})$ satisfies (\ref{good}); i.e.
 \begin{eqnarray} \label{good2}
    \pfrac{a}{t}=-D_{a}^*F_{a}+D_{a} s \quad \mbox{in } U\times [0,\delta]
    \end{eqnarray}
   with $s= (S_1(t)S_2(t))^{-1}\circ \frac {d (S_1(t) S_2(t))}{dt}=  S_2^{-1}(t)s_1(t) S_2(t) +  S_2^{-1}(t)
   \circ \frac {d S_2}{dt}$.

Since $d^*a=0$ in $U$ and $a\cdot\nu=0$ on $\partial U$, it follows from Lemma 2.5 of \cite {U2} that for all $t\in [0,\delta_1]$
\begin{eqnarray*}
&& \int_{U}|a(\cdot, t)|^{n/2}+|\nabla a (\cdot, t)|^{n/2}\,dv \leq C\int_{U}|F_a(\cdot, t)|^{n/2}\,dv \leq C\varepsilon.
\end{eqnarray*}
Using the Sobolev inequality, we have
\[\|a\|_{L^n(U)}\leq C\|a(\cdot, t)\|_{W^{1,n/2}(U)}\leq C\varepsilon.\]
Recall that $s(t) =  S_2^{-1}(t)s_1(t) S_2(t) + s_2(t)$, such that
\[\int_U  \left
<d s, \pfrac{a}{t} \right >\,dv =\int_U\left <\frac {\partial s}{\partial x_k}, \frac{\partial a_k}{\partial t} \right >\,dv =\int_U  \left<
 s, \pfrac{d^*a}{t} \right >\,dv + \int_{\partial U } \left <  s ,  \partial_ta \cdot \nu  \right >ds=0. \]
Then applying the H\"older and  the Sobolev inequalities, we have
\begin{eqnarray}\label{key}
&&\int_U \left <D_{a} s, \pfrac{a}{t} \right >\,dv=\int_U  \left
<d s, \pfrac{a}{t} \right > +\left < [a, S_2^{-1}(t)s_1(t) S_2(t) + s_2(t) ], \pfrac{a}{t}  \right
>\,dv\\
&&\leq  C\left (\int_U |a|^n\,dv\right )^{2/n}
\left[\left(\int_U|s_1|^{\frac {2n}{n-2}}\,dv\right)^{\frac {(n-2)}n}+\left(\int_U|s_2|^{\frac {2n}{n-2}}\,dv\right)^{\frac {(n-2)}n}\right]\nonumber\\
&& \quad + \frac 14 \int_U | \pfrac{a}{t}|^2 \,dv \nonumber\\
&& \leq  C \varepsilon
\int_U |\nabla s_1 |^2+ |\nabla s_2 |^2 \,dv +\frac 14 \int_U | \pfrac{a}{t}|^2 \,dv.\nonumber
\end{eqnarray}
Using (\ref{good2}), we deduce
\begin{eqnarray*}
 \int_U  \left|D_{a} s-\pfrac{a}{t} \right|^2\,dv  \leq C \int_U |\nabla_a
    F_a|^2\,dv .
\end{eqnarray*}
Since
$s(t)=  S_2^{-1}(t)s_1(t) S_2(t) +  s_2(t)$ with $S_2=e^{u_2}$, we have
\[|\nabla s_2|\leq |D_a s|+|\nabla s_1|+ C|\nabla S_2| \, |s_1|+|a| (|s_1|+|s_2|).\]
Following from (\ref {small4}) we can make $\|\nabla S_2\|_{L^n(U)}$ sufficiently small.  Applying the H\"older and  the Sobolev inequalities, we determine
\begin{eqnarray*}
&&\int_U |\nabla s_2|^2\,dv \leq C\int_U |D_a s|^2+|\nabla s_1|^2+ |\nabla S_2|^2 \, |s_1|^2+|a|^2 (|s_1|^2+|s_2|^2)\,dv \\
&& \leq C\int_U |D_a s|^2+|\nabla s_1|^2\,dv + C \left (\int_U|\nabla S_2|^n\,dv \right )^{\frac 2n}\left (\int_U |s_1|^{\frac {2n}{n-2}}  \,dv \right )^{\frac {n-2}n}\\
&&+ C \left (\int_U|a|^n\,dv \right )^{\frac 2n}\left (\int_U (|s_1|^{\frac {2n}{n-2}}+|s_2|^{\frac {2n}{n-2}})\,dv  \right )^{\frac {n-2}n}\\
&&\leq C\int_U |D_a s|^2+|\nabla s_1|^2\,dv + C\varepsilon \int_U  |\nabla s_2 |^2 \,dv .
\end{eqnarray*}
 Choosing $\varepsilon > 0$ sufficiently small in (\ref{key}), we obtain
\begin{eqnarray} \label{point}
\int_U |s|^2+|D_{a} s|^2 +\left| \pfrac{a}{t}\right|^2 \,dv \leq C \int_U |\nabla_a
    F_a|^2 \,dv  \end{eqnarray}
    for any $t\in [0 , \delta]$. Then
 we have
    \[  \int_{0}^{\delta }\int_U |s|^2+|D_{a} s(t)|^2 +| \pfrac{a}{t}|^2 \,dv\,dt\leq C  \int_{0}^{\delta }\int_U |\nabla_a
    F_a|^2\,dv\,dt.\]
    Moreover, we have $d^*a(t)=0$ in $U$ and $a(t)\cdot\nu =0$ on $\partial U$ for $t\in [0, \delta]$.

    For the above choices of $\delta$, we assume that $\delta \leq t_1$. If  $\delta <t_1$, then we  repeat the above procedure starting  at $t_0=\delta$ instead of at $t_0=0$; i.e.
at $t_0=\delta$, there is  a gauge transformation $\tilde S=S(\delta)=e^{u(\delta)}$ such that $D_{\tilde A(t)}=d+\tilde A(t)= \tilde S^*(D_A)$ is a smooth in $\bar U$ such that at $t=\delta$
\[d^*  \tilde A(\delta )=0\quad \mbox{ in }U,\quad  \tilde A(\delta ) \cdot \nu =0\mbox { on } \partial U.\]
Since $\tilde S$ is a  smooth gauge transformation,  $\tilde S^*(D_A)$ is a smooth solution of the Yang-Mills flow (\ref{YMF}).
Then repeating the procedure again,   we obtain two more smooth solutions  $u_1(t)$ and  $u_2(t)$ on $[\delta, 2\delta]$ starting at $t=\delta$ with $\tilde u_2(\delta)=0$.
 More precisely, there exists gauge transformations $S_1(t)= e^{u_1(t)}$ and $\tilde S_2(t)=e^{u_2(t)}$   for any
$t\in [\delta,  2\delta]$, with   initial condition $\tilde u_1(\delta)=0$ and  $\tilde u_2(\delta)=0$,  and connection
\[D_{a(t)}=S(t)^*(D_{A(t)})=(e^{u_2(t)})^*\circ(e^{u_1(t)})^*\circ (\tilde S^*(D_{A(t)}))\]
for $t\in [\delta, 2\delta]$
 satisfying
the same equation (2.19) (or (2.22)) in $U\times [\delta, 2\delta]$ with initial connections $\tilde u_2(\delta )=0$ and $\tilde u_2(\delta)=0$.

Subsequently, for a given $t_1>0$, there are finitely many numbers $l$ depending on $\delta$ and $t_1$ such that
\[[0,t_1]\subset  [0, l\delta].\]
In conclusion, for any $t\in [0,t_1]$, there exists gauge transformations $S(t)$ such that connections $D_{a(t)}=S^*(t)(D_{A(t)}) $ satisfying equations (\ref{good})-(\ref{NU}) in $U\times [0,t_1]$. This completes the proof of Theorem \ref {Theorem 1.1}.

 \end{proof}

 \section{Compactness Theorem and  Existence of Weak Solutions}

We now proceed to prove Theorem \ref{Theorem 1.2}, but first we prove the following compactness theorem.
 \begin{thm}\label {Theorem 4.1} Let $D_{A^k}$ be a sequence of smooth   solutions  of the Yang-Mills
flow (\ref{YMF}) in $\bar U\times [0,t_1]$ with a uniform constant $t_1>0$, where $U=B_{r_0}(x_0)$ for some $r_0>0$.    Assume that there are uniform constants $\varepsilon > 0$ and   $C > 0$ dependent on $k$ such that
\begin{equation}\label{GG1}\sup_{0\leq t\leq t_1} \int_{U} |F_{A^k(x,t)}|^{n/2}dv \leq \varepsilon   \end{equation}
and
 \begin{equation}\label{GG2}  \int_{0}^{t_1 } \int_{U} |\nabla_{A^k(x,t)}
    F_{A^k(x,t)}|^2\,dv\,dt\leq C. \end{equation}
Then   the smooth
solutions $D_{A^k}$ converge smoothly, up to gauge transformation, to a connection $D_A$ in $U\times (0,t_1]$. Furthermore, $D_A$ is a smooth solution of the Yang-Mills flow in  $U\times (0,t_1]$.
\end{thm}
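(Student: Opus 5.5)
The plan is to combine the parabolic gauge fixing of Theorem \ref{Theorem 1.1} with the a priori estimates of Section 2, and then pass to the limit. First I apply Theorem \ref{Theorem 1.1} to each $A^k$ on $U\times[0,t_1]$; this is possible by \eqref{GG1}. It yields gauges $S^k(t)$ and connections $a^k=(S^k)^*(A^k)$ satisfying $d^*a^k=0$ in $U$ and $a^k\cdot\nu=0$ on $\partial U$. We also get the uniform bound \eqref{US2},
$\sup_t\|a^k(t)\|_{W^{1,n/2}(U)}\le C\varepsilon$,
and the gauged flow \eqref{good}. Combining \eqref{NU} with \eqref{GG2} gives
$\int_0^{t_1}\int_U |s^k|^2+|D_{a^k}s^k|^2+|\partial_t a^k|^2\le C$,
uniformly in $k$. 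These bounds give weak compactness: a subsequence has $a^k\rightharpoonup a$ weakly-$*$ in $L^\infty(0,t_1;W^{1,n/2}(U))$ and $\partial_t a^k\rightharpoonup\partial_t a$ weakly in $L^2$. An Aubin--Lions argument then upgrades this to strong convergence in $L^2(U\times[0,t_1])$.

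Next I obtain interior smooth bounds for $t\ge\tau>0$. Fix a smaller ball $B_{r}(x)\subset U$ and $t_0\ge\tau$. Lemma \ref{Lemma  5} with $R_0\sim\min(r,\sqrt\tau)$, together with the smallness \eqref{GG1}, gives
$\sup|F_{A^k}|\le C(\tau,r)$
on compact subsets of $U\times(0,t_1]$. This bound is gauge invariant, so the same holds for $F_{a^k}$. Lemma \ref{Lemma 6}, whose argument is purely local in the same way, then bounds $\nabla_{A^k}^lF_{A^k}$ in $L^\infty_tL^2_x$ and $L^2_{t,x}$ on slightly smaller cylinders for every $l$. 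Gauge invariance transfers these bounds to $\nabla^l_{a^k}F_{a^k}$.

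The third step converts bounds on covariant derivatives of curvature into bounds on $\nabla^l a^k$. In the Coulomb gauge,
$d^*a^k=0$ and $da^k=F_{a^k}-a^k\wedge a^k$,
so $(da^k,d^*a^k)$ is an elliptic system; the boundary condition $a\cdot\nu=0$ is only needed in the global estimate, and interior estimates use cutoffs. Since $\|a^k\|_{L^n}\le C\varepsilon$ is small, the quadratic term can be absorbed in $W^{1,p}$ for some $p>n/2$, so $a^k\in W^{1,p}$ uniformly. Bootstrapping with the bounds on $\nabla^l F$ gives uniform $C^l$ bounds on $a^k(t)$ in compact subsets of $U$, uniformly for $t\ge\tau$. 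Time derivatives come from the equation \eqref{good}. The gauge term $D_{a^k}s^k$ is controlled as follows. Taking $d^*$ of \eqref{good} and using $d^*\partial_ta^k=0$ gives the elliptic equation
$d^*D_{a^k}s^k=d^*D_{a^k}^*F_{a^k}$
for $s^k$, which yields interior regularity of $s^k$ from the curvature bounds.

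I expect this control of $s^k$ to be the main obstacle. Since $s^k$ carries no good boundary data, interior estimates must be anchored by the $L^2$ bound from \eqref{NU} to fix constants. Once all derivatives of $a^k$ are uniformly bounded locally, Arzel\`a--Ascoli gives $C^\infty_{loc}$ convergence of $a^k$ to $a$ on $U\times(0,t_1]$, with $s^k\to s$ as well. The limit solves \eqref{good}. Integrating the ODE $\frac{d}{dt}S=S\,s$ from \eqref{2.2} produces a smooth gauge in which $D_a$ becomes a genuine smooth solution of \eqref{YMF} on $U\times(0,t_1]$. This completes the plan.
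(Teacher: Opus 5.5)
Your spatial estimates are sound. The gauge from Theorem~\ref{Theorem 1.1}, the interior curvature bounds from Lemmas~\ref{Lemma  5}--\ref{Lemma 6}, the Coulomb-gauge bootstrap for $\nabla^l a^k$, and the elliptic equation $d^*ds=a\#D_a^*F_a+a\#\nabla s$ (exactly the paper's \eqref{as}) do give uniform interior $C^l$ bounds on $a^k(t)$ and interior spatial estimates for $s^k(t)$. You also correctly flag $s^k$ as the obstacle, but the proposed fix fails, and the gap is in time.

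The only zeroth-order anchor for $s^k$ is \eqref{NU}, which is an $L^2$ bound in space \emph{and} time. It cannot be upgraded to a bound at each fixed $t$. Since $a^k(t)\cdot\nu=0$ and $d^*a^k(t)=0$ for all $t$, $s^k(t)$ is determined, up to an additive constant, by the Neumann problem $d^*D_{a^k}s^k=d^*D^*_{a^k}F_{a^k}$ in $U$, $\partial_\nu s^k=(D^*_{a^k}F_{a^k})\cdot\nu$ on $\partial U$. So it depends on the curvature near $\partial U$, where you have no pointwise-in-time control: Lemma~\ref{Lemma 6} needs smallness on a ball $B_{2R}(x)\subset U$, so its bounds degenerate at $\partial U$.

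Anchoring the interior estimates by \eqref{NU} therefore gives only $s^k\in L^2_tC^l_x$ on compact sets. Hence $\partial_t a^k\in L^2_tC^l_x$, and $a^k$ converges only in $C^0_tC^l_x$. You get no bound on $\partial_t^m a^k$ for $m\ge 2$. Theorem~\ref{Theorem 1.1} provides no time regularity of $S^k$ at all: the gauge is glued on intervals of length $\delta=\delta(A^k)$, and nothing prevents $s^k$ from jumping at the junctions.

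Consequently Arzel\`a--Ascoli does not give $C^\infty_{loc}$ convergence of $a^k$ on $U\times(0,t_1]$, and $s^k$ converges only weakly in $L^2_t$. The gauge you get by integrating $\frac{d}{dt}S=Ss$ for the limit is not known to be smooth. You also never show that a regauged sequence $D_{A^k}$ converges smoothly, which is what the statement asserts.

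The missing idea is to remove the time-dependent part of the gauge at level $k$, before taking limits, which is what the paper does. Fix $t_0>0$ and use the time-independent gauge $S^k(t_0)$. Equivalently, solve $\frac{d}{dt}T^k=T^ks^k$ with $T^k(t_0)=I$; then $((T^k)^{-1})^*a^k(t)=S^k(t_0)^*A^k(t)=:\tilde A^k(t)$.

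These connections solve \eqref{YMF} exactly and are in Coulomb gauge at $t=t_0$, so your bootstrap gives $C^l$ bounds on $\tilde A^k(t_0)$. Then $\partial_t\tilde A^k=-D^*_{\tilde A^k}F_{\tilde A^k}$, together with the interior bounds on $\nabla^l_{\tilde A^k}F_{\tilde A^k}$ from Lemmas~\ref{Lemma  5}--\ref{Lemma 6}, propagates these bounds to $[t_0,t_1]$ by a Gronwall argument. It also expresses every time derivative through curvature derivatives.

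This yields uniform space-time bounds and smooth subsequential convergence on compact subsets of $U\times[t_0,t_1]$. A diagonal argument as $t_0\to 0$ then covers $U\times(0,t_1]$, since the gauges for different $t_0$ differ by time-independent transformations bounded in $C^l$. With this change the time-dependent gauge is needed only for weak compactness, and your route becomes the paper's.
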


\begin{proof}Let $D_{A^k(t)}$ be  a sequence of  smooth solutions of the Yang-Mills flow in $U\times [0,t_1]$ satisfying
 (\ref{GG1})  and (\ref{GG2}).
Following from Theorem \ref {Theorem 1.1}, there exists gauge
transformations $S^{k}(t)=e^{u^k(t)}$ and  connections
$D_{a^k}=(S^k)^*(D_{A^k})=d+a^k$ such that
\[  d^*  a^k(t)=0 \quad \mbox {in } U,\quad  a^k(t)\cdot \nu=0 \mbox { on }\partial  U\]
and
 \[\int_{U} r_0^{-{n/2}}|a^k(t)|^{n/2}+|\nabla a^k(t)|^{n/2}\,dv\leq C \int_{U} |F_{a^k(t)}|^{n/2}\,dv\leq C\varepsilon\]
 for any $t\in [0, t_1]$. Furthermore,
 $D_{a^k}$ is a  solution of the equation
\begin{eqnarray}\label{EQ}
    \pfrac{a^k}{t}=-D_{a^k}^*F_{a^k}+D_{a^k} s^k
    \end{eqnarray}
   in $U\times [0,t_1]$, where \[s^k(t)=(S^k)^{-1}(t)\circ \frac d{dt} S^k(t),  \]
and
   \begin{equation}\label{NU1}  \int_{0}^{t_1 }\int_{U}\frac 1{r_0^2}| s^k|^2+ |D_{a^k} s^k|^2 +| \pfrac{a^k}{t}|^2 \,dv\,dt\leq C \int_{0}^{t_1 } \int_{U} |\nabla_{a^k}
    F_{a^k}|^2\,dv\,dt\leq C. \end{equation}

Letting $k\to\infty$, $(a^k, s^k)$ converges to $(a, s)$, which is a solution of
\begin{eqnarray}\label{YEF}
    \pfrac{a}{t}=-D_{a}^*F_{a}+D_{a} s
    \end{eqnarray}
   in $U\times [0,t_1]$.  Also, we have
\begin{equation}\label{BU}  d^*  a(t)=0\quad \mbox {in } U,\quad a(t)\cdot \nu=0 \mbox { on }\partial  U \end{equation}
and
\[\int_{U} |F_{a(t)}|^{n/2}\,dv\leq \liminf_{k\to 0} \int_{U} |F_{a_i^k(t)}|^{n/2}\,dv\leq \varepsilon.\]
Applying the  condition (\ref{BU}), we have
 \[\int_{U} r_0^{-{n/2}}|a(t)|^{n/2}+|\nabla a(t)|^{n/2}\,dx\leq C \int_{U} |F_{a(t)}|^{n/2}\,dv\leq C\varepsilon\]
for all $t\in [0,t_1]$ and
\begin{equation} \label{es}  \int_{0}^{t_1 }\int_{U}\frac 1{r_0^2}|s|^2+ |D_{a} s|^2 +| \pfrac{a}{t}|^2 + |\nabla_{a}
    F_{a}|^2\,dv\,dt\leq C. \end{equation}

Using  (\ref{YEF})-(\ref{BU}) and the identity $D_a^*D_a^*F_a=0$ (see \cite{St3}),  we
deduce
\begin{eqnarray}\label{as}
d^*ds&=&a\#D_{a}^*F_{a}+a\#\nabla s .
      \end{eqnarray}
Let $\phi$ be a cut-off function in $C_0^{\infty}(B_{\frac 34 r_0})$ with $\phi =1$ in $B_{r_0/2}$.  Then, multiplying (\ref{as}) with $\phi$, we obtain
\begin{eqnarray*}
d^*d(s\phi)=a\#D_{a}^*F_{a}\phi+a\#\nabla (s \phi)+a\#\nabla  \phi s +s d^*d\phi +d\phi \# ds .
      \end{eqnarray*}
Applying the $L^p$-estimates  to the above equation and using the H\"older and  Sobolev inequalities,  we have
 \begin{eqnarray*}
 &&\int_{B_{\frac 34 r_0}}|\nabla^{2}(s\phi)|^2\,dv\leq C\int_{B_{\frac 34 r_0}}| d^*d(s\phi)|^2\,dv\\
 &&\leq C\int_{B_{\frac 34 r_0}}|a|^2 (|D_{a}^*F_{a}\phi|^2+ |\nabla (s \phi)|^2)+|a|^2 |\nabla  \phi s|^2 +|s|^2 |\nabla^2 \phi|^2 +|d\phi|^2 |ds|^2 \,dv\\ &\leq&  C\left (\int_{B_{ r_0}}|a|^n\,dv\right )^{\frac 2 n}\left [ \left ( \int_{B_{\frac 34 r_0}}  |\nabla_a F_a|^{\frac {2n}{n-2}} dv\right )^{\frac {n-2} n}+ \left (\int_{B_{\frac 34 r_0}} |\nabla (\phi s)|^{\frac {2n}{n-2}}dv\right )^{\frac {n-2} n}\right  ]
 \\
 &+ &  C\left (\int_{B_{\frac 34 r_0}}|a|^n\,dv\right )^{\frac 2 n}\left (\int_{B_{\frac 34 r_0}} |s|^{\frac {2n}{n-2}}\,dv\right )^{\frac {n-2} n}+C\int_{B_{\frac 34 r_0}} |\nabla s|^2+|s|^2dv \\
 &\leq &  C \varepsilon \int_{B_{\frac 34 r_0}}    |\nabla^2 (\phi s)|^2 \,dv+C \int_{B_{r_0}}    |\nabla^2_a F_a|^2 +|F_a|^2+|\nabla s|^2+|s|^2\,dv. \end{eqnarray*}
For a sufficiently small $\varepsilon>0$, we obtain
\begin{eqnarray}\label{a12}
\int_{B_{\tfrac 34 r_0}} |\nabla^{2}s|^2 dv\leq C \int_{B_{\tfrac 34 r_0}} |\nabla^2_a F_a|^2  +|\nabla_a F_a|^2+|F_a|^2\,dv+C.  \end{eqnarray}

Using (\ref {YEF}), we have
\begin{eqnarray}\label{YEF1}
    \pfrac{a}{t}=\Delta a +a \# F_a +\nabla a\# a + D_as.
    \end{eqnarray}
Then  $\partial_ta, \nabla^2a\in
    L^{2}(B_{(1-\theta)r_0}\times [\theta,t_1] )$  for any $\theta >0$ (see, e.g.,  \cite{LSU}).
    Using $d^* a =0$ and the fact that $F_a$ is bounded inside $B_{(1-\theta)r_0}\times [\theta,t_1]$, we have
    \[\sum_{j=0}^{l+1} \int_{\frac 12 U} |\nabla^j  a|^2\,dv\leq C_l\sum_{j=1}^{l}\int_U |\nabla_{a}^{j} F_{a}|^2\,dv. \]

  It follows from the Uhlenbeck  gauge fixing theorem that  at each $t_0 >0$ there exists gauge transformations $S^k(t_0
    )$ such that  $\tilde A^k(t) := S^k(t_0)(A^k(t))$ satisfying the Yang-Mills flow (\ref{YMF}),
   $d^*  \tilde A^k(t_0)=0$ in $U$ and $\tilde A^k(t_0)\cdot \nu =0$ on $\partial U$. According to Lemma \ref{Lemma 5}, there exists a uniform constant $C(t_0) > 0$ such that
    \[|F_{\tilde A^k}(x, t_0)|\leq C(t_0).\]
    For each integer $l\geq 1$, it follows from Lemma \ref{Lemma 6} that
     \[ \int_{\frac 34 U}|F_{\tilde A^k}(x, t_0)|^2+\cdots +|\nabla^l F_{\tilde A^k}(x, t_0)|^2 \,dv\leq  C_l(t_0).\]
   Since $d^*  \tilde A^k(t_0)=0$, we have
    \[\sum_{j=0}^{l-1} \int_{\frac 12 U} |\nabla^j \tilde A^k(x,t_0)|^2\,dv\leq C\sum_{j=1}^{l}\int_{\frac 34U} |\nabla_{\tilde A^k}^{j} F_{\tilde A^k}|^2\,dv. \]
  Then, using the Yang-Mills flow (\ref{YMF}), we obtain
   \begin{eqnarray*}
   &&\sup_{0\leq t\leq t_1}\sum_{j=0}^l \int_{\frac 12 U} |\nabla^j \tilde A^k(x,t_1)|^2\,dv\\
   &\leq &\sum_{j=0}^l \int_{\frac 12 U} |\nabla^j  \tilde A^k(x,t_0)|^2\,dv
    + C\sum_{j=1}^{l-1}\int_{t_0}^{t_1}\int_{\frac 34U} |\nabla_{\tilde A^k}^{j-1} F_{\tilde A^k}|^2\,dv dt.
    \end{eqnarray*}
Therefore,
   $\tilde A^k$  converges to a smooth solution  $\tilde A$ of the Yang-Mills flow in $U\times (0,t_1]$, which implies that $a$ is gauge-equivalent to a smooth solution.
\end{proof}

As a consequence of Theorem \ref {Theorem 4.1}, we have the following local existence theorem.
\begin{thm}\label{Theorem 4.2} For $n\geq 4$,  let $D_{A_0} \in W^{1,\frac{n}{2}}(M)$  be a connection with   curvature $F_{A_0}\in L^{n/2}(M)$.
Assume that there exists a uniform constant $\varepsilon_0 >0$ such that
\begin{equation}  \label{S-C}
\sup_{x_0\in M}\int_{B_{R_0}(x_0)} |F_{A_0}|^{n/2}\,dv < \varepsilon_0
\end{equation}
for some $R_0>0$.
 Then there exists
a local weak solution of the Yang-Mills flow (\ref{YMF}) with initial connection $A_0$ in $M\times [0, t_1]$ for some $t_1\geq \  \frac {R_0^2} C$ and $C>1$. Furthermore, for each $t\in  (0,t_1]$, the solution $A(t)$ is gauge-equivalent to a smooth solution of the Yang-Mills flow.
  \end{thm}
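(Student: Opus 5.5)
We plan to approximate the initial connection by smooth ones and pass to the limit with Theorem \ref{Theorem 4.1}. Since $A_0\in W^{1,\frac n2}(M)$, we choose smooth connections $A_{0,k}\to A_0$ strongly in $W^{1,\frac n2}(M)$. The curvature is $F_{A}=F_{\text{ref}}+D_{\text{ref}}A+A\#A$, and the Sobolev embedding $W^{1,n/2}\subset L^n$ gives $F_{A_{0,k}}\to F_{A_0}$ in $L^{n/2}(M)$. Hence, for $k$ large, \eqref{S-C} holds for $A_{0,k}$ uniformly in $x_0$ with $\varepsilon_0$ replaced by $2\varepsilon_0$. Let $A_k(t)$ be the smooth solution of the Yang-Mills flow with initial data $A_{0,k}$, constructed in Section 2 via the parabolic system \eqref{2.1} and the gauge transformation \eqref{2.2}. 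Let $T_k$ be its maximal smooth existence time.

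The key step is a uniform lower bound on the time during which the local $L^{n/2}$ smallness persists. Set
\[
\tau_k:=\sup\Big\{\tau<T_k:\ \sup_{0\le t\le\tau}\max_{x_0\in M}\int_{B_{R_0/2}(x_0)}|F_{A_k}(\cdot,t)|^{n/2}dv\le \varepsilon\Big\},
\]
where $\varepsilon$ is the smallness constant of Lemma \ref{Lemma 3} and Theorem \ref{Theorem 1.1}, and $2\varepsilon_0$ is chosen much smaller than $\varepsilon$ (more precisely, $C_{\rm cov}\,2\varepsilon_0\le\varepsilon/4$, where $C_{\rm cov}$ counts how many $R_0/2$-balls are needed to cover a $R_0$-ball). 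On $[0,\tau_k]$ we apply Lemma \ref{Lemma 3} with radius $R_0/2$. Bounding the right-hand side of \eqref{local energy} by covering $B_{R_0}$ with finitely many $R_0/2$-balls gives
\[
\int_{B_{R_0/2}(x_0)}|F_{A_k}(t)|^{n/2}\le C_{\rm cov}\,2\varepsilon_0+\frac{C t}{R_0^2}\,\varepsilon .
\]
For $t\le R_0^2/C$ this is less than $\varepsilon/2$, so continuity forces $\tau_k\ge t_1:=R_0^2/C$. The same reasoning should give $T_k>t_1$: if $T_k\le t_1$, the smallness would hold up to $T_k$, and then Lemmas \ref{Lemma 5} and \ref{Lemma 6} bound $F_{A_k}$ and all its derivatives, so the flow could be continued past $T_k$. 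Lemma \ref{Lemma 2} then gives the uniform bound $\int_0^{t_1}\int_M|\nabla_{A_k}F_{A_k}|^2\le C(1+t_1/R_0^2)\int_M|F_{A_{0,k}}|^2\le C$. Here we use $L^{n/2}\subset L^2$, which holds because $n\ge4$ and $M$ is compact.

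This gives exactly the hypotheses \eqref{GG1}--\eqref{GG2} on every ball $B_{R_0/2}(x_0)$. Theorem \ref{Theorem 4.1} then yields local subsequential smooth convergence, up to gauge, on $B_{R_0/2}(x_0)\times(0,t_1]$, to a smooth Yang-Mills flow. To obtain a global object we cover $M$ by finitely many such balls and use a diagonal subsequence. The local gauges $S^k$ are patched in the style of Uhlenbeck's patching argument, or more simply the limit is taken in the gauge-invariant sense, as in Struwe \cite{St3}. The result is a weak solution $A$ on $M\times[0,t_1]$ which for each $t>0$ is gauge-equivalent to a smooth solution.

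The main obstacle is the attainment of the initial data in the weak sense. From \eqref{NU1}, $\partial_t a^k$ is bounded in $L^2(U\times[0,t_1])$, and $a^k(t)$ is bounded in $W^{1,n/2}$ uniformly in $t$. So $a^k\to a$ in $C^0([0,t_1];L^2)$ by Aubin--Lions, and $a(t)\to a(0)$ as $t\to0$, where $a(0)$ is a Coulomb-gauge representative of $A_0$: it is the limit of the Uhlenbeck gauges of $A_{0,k}$ given by Theorem \ref{U-G}. Checking that this $a(0)$ is gauge-equivalent to $A_0$ uses strong $W^{1,n/2}$ convergence of $A_{0,k}$ together with weak compactness of the gauges $S^k(0)$ in $W^{2,n/2}$. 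I expect this compatibility, together with the gluing of the local gauges into a global weak solution, to need the most care.
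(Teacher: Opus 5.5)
Up to the construction of the limit, your argument is the paper's. That includes the approximation, the uniform smallness time from Lemma \ref{Lemma 3} with a covering and continuity argument, continuation past $T_k$ via Lemmas \ref{Lemma 5}--\ref{Lemma 6}, the bound from Lemma \ref{Lemma 2}, and the local Coulomb gauges of Theorem \ref{Theorem 1.1}/Theorem \ref{Theorem 4.1}.

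The gap is the step you leave open: assembling one global weak solution with $A(0)=A_0$. All your uniform control lives in the local Coulomb gauges. \eqref{US2} bounds $a_i^k$, and \eqref{NU1} bounds $\partial_t a_i^k$ and $s_i^k$; through $d g=g\,a_j^k-a_i^k\,g$ this also controls the transition maps $g=(S_i^k)^{-1}S_j^k$. What you do not control is the gauges $S_i^k(t)$ themselves for $t>0$. These are exactly what you need to turn the local limits $a_i(t)$ into a single connection on the fixed bundle $E$ that equals $A_0$ at $t=0$. Your identification of $a_i(0)$ with $S_{0,i}^*A_0$ (Aubin--Lions plus $W^{2,n/2}$ compactness of $S^k(0)$) is correct, but it concerns only the single time $t=0$. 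The relation $\partial_tS_i^k=S_i^ks_i^k$ gives time regularity of $S_i^k$, but its spatial derivative $\nabla S_i^k=S_i^ka_i^k-A^kS_i^k$ needs a bound on $A^k(t)$ in the reference frame, which you never derive. Limit cocycles alone give local data $\{a_i(t)\}$, not a connection on $E$ attaining $A_0$. Taking the limit ``in the gauge-invariant sense'' gives a solution only modulo gauge.

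The missing idea is the paper's: for this step, do not gauge at all. In the fixed frame $D_{\text{ref}}+A^k$, Lemma \ref{Lemma 1} gives $\int_0^{t_1}\int_M|\partial_tA^k|^2\,dv\,dt\le\frac12\int_M|F_{A_{0,k}}|^2\,dv\le C$. Gronwall applied to $\frac{d}{dt}\int_M|A^k|^2\,dv\le\int_M|\partial_tA^k|^2\,dv+\int_M|A^k|^2\,dv$ then gives $\sup_{0\le t\le t_1}\int_M|A^k(t)|^2\,dv\le C$. Hence $A^k\rightharpoonup A$ in $H^{1,2}([0,t_1];L^2)\subset C^0([0,t_1];L^2)$. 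So $A$ is automatically a connection on $E$, and $A(0)=\lim_k A^k(0)=A_0$ comes for free. The same bound makes $S_i^k$ bounded in $L^\infty([0,t_1];H^{1,2}(U_i))\cap H^{1,2}([0,t_1];L^2(U_i))$, so $S_i^k\to S_i$. Then $A=S_ia_iS_i^{-1}-dS_i\,S_i^{-1}$ on each $U_i$ ties the weak limit to your smooth local limits for $t>0$. With this in hand, your compatibility argument at $t=0$ is no longer needed.
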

\begin{proof}
There exists a sequence of  smooth connections $\{A^k(0)\}_{k=1}^{\infty}$, which converge strongly to $A_0 \in W^{1, \frac{n}{2}}(M)$. Since $A^k(0)$ is smooth, $D_{A^k(t)}=D_{\text{ref}}+A^k(t)$ is the unique  smooth solution  of the Yang-Mills flow  with initial connection $A^k(0)$  for some maximal existence time $T_k>0$.
 We claim that there exists a uniform constant $t_1>0$ such that $T_k\geq t_1$ for all $k\geq 1$.

Using \eqref{S-C}, for some small constant $\varepsilon >0$, there exists a uniform constant $R_0>0$ such that for any point $x_0\in M$,
\[\int_{B_{R_0}(x_0)} |F_{A^k_0}|^{n/2} \,dv\leq \frac {\varepsilon} 2.  \]
 Since $M$ is compact, there is a finite  cover of open balls $\{\frac 12 U_i\}_{i=1}^L$ of $M$ with $U_i=B_{R_0}(x_i)$, such that at each $x\in M$, there are at most a finite number $K$ of  balls intersecting.
Following from Lemma \ref{Lemma 3}  and a covering argument over $M$, we have
  \begin{equation*}  \int_{B_{R_0}(x_0)} |F_{A^k(t)}|^{n/2}  \,dv\leq \int_{B_{2R_0}(x_0)} |F_{A^k_0}|^{n/2}  \,dv +\frac {CKt}{R_0^2} \sup_{0\leq s\leq t} \max_{x\in M}\int_{B_{R_0}(x)} |F_{A^k(s)}|^{n/2}  \,dv  \end{equation*}
  which implies
\begin{equation*} \max_{x\in M} \int_{B_{R_0}(x)} |F_{A^k(t)}|^{n/2}  \,dv\leq \varepsilon \end{equation*}
for all $t\in [0,\frac { R_0^2}{2KC}]$. It follows that each smooth solution $A^k(t)$ of the Yang-Mills flow can be extended to the unform time $t_1 =  \frac {R_0^2}{2KC}>0$  such that
 \[\sup_{0\leq t\leq t_1}\int_{B_{R_0}(x_0)}|F_{A^k(t)}|^{n/2}\,dv\leq \varepsilon \]
and
 \begin{equation*} \int_{0}^{t_1 }\int_{M}   |\nabla_{A^k}
    F_{A^k}|^2\,dv\,dt\leq C. \end{equation*}

Applying Theorem \ref{Theorem 1.1} on each  $U_i$ we obtain gauge
transformations $S_i^{k}(t)=e^{u^k(t)}$ and connections
$D_{a^k}=(S_i^k)^*(D_{A^k})=d+a_i^k$ such that
\[  d^*  a_i^k=0 \quad \mbox {in } B_{r_0}(x_0),\quad a_i\cdot \nu=0 \mbox { on }\partial  U_i,\]
satisfying
 \[\int_{U_i} r_0^{-n/2}|a_i^k(t)|^{n/2}+|\nabla a_i^k(t)|^{n/2}\,dv\leq C \int_{U_i} |F_{a_i^k(t)}|^{n/2}\,dv\leq C\varepsilon\]
for  all $t\in [0, t_1]$,
and for which $D_{a_i^k}$ is a  solution of the equation
\begin{eqnarray}\label{4.5}
    \pfrac{a_i^k}{t}=-D_{a_i^k}^*F_{a_i^k}+D_{a_i^k} s_i^k
    \end{eqnarray}
   in $B_{r_0}(x_0)\times [0,t_1]$, where \[s_i^k(t)=(S_i^k)^{-1}(t)\circ \frac d{dt} S_i^k(t)   \] and
 \begin{equation*} \int_{0}^{t_1 }\int_{U_i}   (|\pfrac{a_i^k}{t}|^2 +|s^k_i|^2+|D_{a_i^k} s_i^k|^2)\,dv\,dt\leq \int_{0}^{t_1 }\int_{U_i}   |\nabla_{a^k}
    F_{a^k}|^2\,dv\,dt\leq C\int_M |F_{A_0}|^2\,dv. \end{equation*}

 For any $\delta>0$ and each $l \geq 1$, using Lemma \ref{Lemma 6}, there exists  constants $C(\delta, l)>0$ such that
\begin{equation*}
   \int_{\delta}^{t_1} \int_M|\nabla_{a^k} F_{a^k}|^2+\cdots +|\nabla^l_{a^k} F_{a^k}|^2\,dv\, dt\leq C(\delta, l).
\end{equation*}
Since $d^* a_i^k=0$ in $U_i$, we have $\Delta a_i^k= -d^*d a_i^k=d^*F_{a}+da\# a$. Under these conditions, we determine that
\begin{equation*} \int_{0}^{t_1 }\int_{\frac 1 2U_i}   |\nabla^{l+1} {a_i^k}|^2\,dv \,dt\leq C(\delta, l+1)
    . \end{equation*}

Then as $k\to\infty$, $a^{k}_i$ converges to $a_i$  for any $t\in [\delta, {t_1}]$ satisfying that $d^*a_i(t)=0$ in $U_i$ with $a_i(t)\cdot\nu =0$ on $\partial U_i$ for  $t\in [\delta , {t_1}]$,
 \[\int_{U_i} \frac 1{r_0^{n/2}}|a_i(t)|^{n/2}+|\nabla a_i(t)|^{n/2}\,dv\leq C \int_{U_i} |F_{a_i
 (t)}|^{n/2}\,dv\]
   and
  \begin{eqnarray}\label{4.8}
 &&\frac d{dt} a_i= - D_{a_i}^*F_{a_i}+ D_{a_i} s_i.
    \end{eqnarray}
Furthermore,  we have
    \[  \int_{0}^{t_1}\int_{U_i} |D_{a_i} s_i(t)|^2 +| \pfrac{a_i}{t}|^2 \,dv\,dt\leq C  \int_{0}^{t_1}\int_{U_i} |\nabla_{a_i}
    F_{a_i}|^2\,dv\,dt\]
    and
\begin{equation*}
   \int_{\delta}^{t_1} \int_{\frac 12 U_i}|\nabla^{l+1} a_i|^2+|\nabla^l_{a_i} F_{a_i}|^2\,dv\, dt\leq C(\delta, l)
\end{equation*}  for each $l\geq 1$.

In the local trivialization of $E_{U_i}$,  $d+ a^{k}_i$ can be regarded as a local representative of  the   connection $D_{a^k}$ of $E$ over $U_i$. In the overlap $\frac 12 U_i\cap \frac 12 U_j$ of two balls,  $a^k_i$ and $a^k_j$ can be identified as the equivalent by a gauge transformation  through $S_{ij;k}\in   G$ between  $E_{U_i}$ and $E_{U_j}$ (see Lemma 3.5 of \cite{U2}) such that
  \begin{eqnarray*}
&&a^k_j= S^{-1}_{ij; k}dS_{ij; k}+S^{-1}_{ij; k}a^{k}_iS_{ij; k},\\
&& d  a^k_j= dS^{-1}_{ij; k}\wedge dS_{ij; k}+S^{-1}_{ij; k}da^{k}_iS_{ij; k}+d S^{-1}_{ij; k}\wedge a^{k}_iS_{ij; k}-S^{-1}_{ij; k}a^{k}_i \wedge d S_{ij; k}.
  \nonumber
\end{eqnarray*}
Moreover, we have
\[ D_{a^{k}_j}=S_{ij; k}^*(D_{a^{k}_i})=S_{ij; k}^{-1}\circ D_{a^{k}_i} \circ S_{ij; k},\quad s^{k}_j=S_{ij; k}  s^{k}_i S_{ij;k}^{-1}+S_{ij;k}^{-1}\circ \frac d{dt} S_{ij;k}  \]
and
\[F_{a^{k}_j}=S_{ij;k}^{-1} F_{a^{k}_i} S_{ij;k},\quad D_{a_j^k}s_j^k =D_{a_j^k}\!\left(
S_{ij;k}^{-1}s_i^kS_{ij;k}
+S_{ij;k}^{-1}\frac{\partial S_{ij;k}}{\partial t}
\right). \]
From which we deduce
\begin{equation}
\begin{aligned}
\frac{\partial a_j^k}{\partial t}
&=S_{ij;k}^{-1}\frac{\partial a_i^k}{\partial t}S_{ij;k}
+D_{a_j^k}\!\left(
S_{ij;k}^{-1}\frac{\partial S_{ij;k}}{\partial t}
\right)\\
&=-S_{ij;k}^{-1}(D_{a_i^k}^{*}F_{a_i^k})S_{ij;k}
+S_{ij;k}^{-1}(D_{a_i^k}s_i^k)S_{ij;k}\\
&\qquad+D_{a_j^k}\!\left(
S_{ij;k}^{-1}\frac{\partial S_{ij;k}}{\partial t}
\right)\\
&=-D_{a_j^k}^{*}F_{a_j^k}
+D_{a_j^k}\!\left(
S_{ij;k}^{-1}s_i^kS_{ij;k}
+S_{ij;k}^{-1}\frac{\partial S_{ij;k}}{\partial t}
\right)\\
&=-D_{a_j^k}^{*}F_{a_j^k}+D_{a_j^k}s_j^k,
\end{aligned}
\end{equation}
 which shows that the equation (\ref  {4.5}) is defined globally on $M$. Furthermore, equation  (\ref{4.8}) is well-defined in $U_i\cap U_j$ since
  \begin{eqnarray*} &&\frac d{dt} D_{a_j}-D_{a_j} s_j=- D_{a_j}^*F_{a_j} \\
 &&=  - S_{ij}^{-1}\circ D_{a_i}^*F_{a_i}  \circ S_{ij}
=  S_{ij}^{-1}\circ  (\frac {d    D_{a_i}}{dt}    -   D_{a_i}  s_i) \circ S_{ij}
    \end{eqnarray*}
   with $s_j=S_{ij}^{-1}  s_i S_{ij}+S_{ij}^{-1}\circ \frac d{dt} S_{ij}$.

Similarly to \eqref{a12},  an iterative argument using equation \eqref{as} yields
\begin{eqnarray}\label{a7'}
          &&    \int_{\delta}^{t_1} \int_{\frac 12 U_i} |\nabla^{k} s_i|^2\,dv\,dt  \leq C\int_{\delta}^{t_1}\int_{U_i} |\nabla_{a}^{k}
    F_{a}|^2+\cdots +|F_{a}|^2\,dv \,dt
    \end{eqnarray}
    for any integer $k\geq 1$.

    Using \eqref{4.8}, we have
\begin{eqnarray}\label{a7'}
          &&   \int_M|\nabla^{k}\frac {da}{dt}|^2\,dv\,dt \leq C\int_M |\nabla_A^{k}
    F_A|^2+\cdots +|F_A|^2\,dv  \nonumber
    \end{eqnarray}
    for $t\in [\delta, t_1 ]$ and any integer $k\geq 2$.
This shows that $a(t)$ is a smooth solution of (\ref{4.5}) in $M\times (0,t_1]$.

Using Lemma \ref{Lemma 1}, we have
\begin{equation}\int_0^{t_1}\int_M \abs{\pfrac{A^k}{t}}^2 dv\,dt\leq  C\left(1+ \frac{t_1}{r_0^2}\right)\int_M  |F_{A^k_0}|^2
  \,dv \end{equation}
and
 \begin{eqnarray}\label{3.28}
\frac {d}{d t} \int_M  |A^k(t) |^2 dv&=&2   \int_M   \left <\frac{d }{d t}A^k(t), A^k(t)\right >
  \,dv\\
  &\leq &  \int_M \abs{\pfrac{A^k}{t}}^2 dv + \int_M  |A^k(t) |^2 dv. \nonumber \end{eqnarray}
Using Gronwall's inequality and \eqref{3.28}, we have
\begin{equation*}
\int_M |A^k(t)|^2\,dv
\leq e^{t_1}\left[
\int_M |A^k(0)|^2\,dv
+C\left(1+\frac{t_1}{r_0^2}\right)
\int_M |F_{A^k_0}|^2\,dv
\right].
\end{equation*}
   In $U_i$, we note
\begin{equation*}
 a^k_{i}(t)= (S_i^k)^{-1}(t)  d S^k_{i}(t) + (S_i^k)^{-1}(t) A^k(t)S^k_{i}(t) .\end{equation*}
  As $k\to \infty$, $S^k_{i}$ weakly converges to $S_i$ in $L^{\infty}([0, t_1]; H^{1,2}(U_i))$ and $ H^{1,2}([0, t_1]; L^2(U_i))$ up to a subsequence.
Furthermore, $A^k$ weakly converges to $A$ in $L^{\infty}(0, t_1; L^{2}(U_i))$ and $H^{1,2}([0, t_1]; L^2(U_i))$ such that $A$ is a weak solution of the Yang-Mills flow with initial value $A_0\in H^{1,2}$.
This shows that $A(t)$ is a weak solution of  the Yang-Mills flow (\ref{YMF}) with
 $A\in C^0\left ( [0, t_1]; L^2( \Omega^1(Ad \,E ))\right )\cap H^{1,2}\left ([0, t_1]; L^2(\Omega^1(Ad \,E ))\right )$ and
		 $F_A\in C^0\left ( [0, t_1]; L^2(\Omega^2(Ad \,E ))\right )$. Fix $\tilde t>0$. In each $U_i$, $\tilde A(t)=S_i^*(\tilde t)(A(x,t))$ is   a weak solution of the Yang-Mills flow with and smooth for $t>0$.

\end{proof}

We  generalize the result of Struwe \cite {St3} on existence of weak solutions of the Yang-Mills flow in four dimensions to higher dimensions by proving Theorem \ref{Theorem 1.2}.

\begin{proof}[Proof of Theorem   \ref{Theorem 1.2}] Following from Theorem \ref{Theorem 4.2}, there exists a weak solution $A(t)\in M\times [0, t_1]$ of the Yang-Mills flow with initial connection $A_0$ satisfying $F_{A_0}\in L^{n/2}(M)$ for some $t_1>0$. According to Lemmas \ref{Lemma  5}-\ref{Lemma  6},    we can start again at the time $t_1$ as a new initial time and extend the solution to the maximal existence time $T_1>0$ such that as $t_i \to  T_1$,
 there is a  constant $\varepsilon_1>0$  and at least one singular point  $x_0\in M$, which is characterized  by the condition
\[ \limsup_{t_i \to T_1}  \int_{B_R(x_0)} |F(x, t_i,)|^{n/2}\,dv \geq \varepsilon_1 \]
 for any $R\in (0, R_0]$ with some $R_0>0$. Then for any $t\in (0, T_1)$, the weak solution $A({t})$ is gauge-equivalent to a smooth solution.
\end{proof}

\section{ Monotonicity Formula and Applications}
We can now prove Theorems \ref{Theorem1.3'}-\ref{Theorem 1.4}, but first we recall a   monotonicity formula for the
Yang-Mills flow that is analogous to the one given by
Struwe \cite {St2} for the heat flow of harmonic maps.
For any $x_0\in M$, there exists a geodesic ball $B_{i(M)}(x_0)\subset M$ for
some $i(M)>0$.
Let $z=(x,t)$ be points in $M\times\R$.  Then for some fixed point
$z_0=(x_0,t_0)\in M\times \R_+$, we let
$$T_R(t_0) :=\left \{ z=(x,t) : t_0-4R^2<t<t_0-R^2, x\in M\right \}\,,$$
and
$$P_R(z_0) :=B_R(x_0)\times [t_0-R^2, t_0 ]\,,$$
where $B_R(x_0)\subset M$ denotes the geodesic ball at the center
$x_0$ with radius $R$. For simplicity, we denote
$T_R(0,0)$ and $P_R(0,0)$ by   $T_R$ and $P_R$ respectively.

The fundamental solution to the (backward) heat equation with singularity at $z_0$ is given by
$$G_{z_0}(z)=\frac 1{(4\pi (t_0-t))^{n/2}}\text{exp} \left (-\frac
{(x-x_0)^2}{4(t_0-t)}\right )\,, \quad t<t_0$$

Assume that  $A$ is a solution of the heat flow (1.3) in $M\times
\R_+$. For some smooth cut-off function $\phi$  with compact support in
$B_{i(M)}(x_0)$ such that $\phi =1$ in $B_{i(M)/2}(x_0)$, a Struwe-type monotonicity in $B_{i(M)/2}(x_0)$   was established in  \cite{CS}, \cite{Ha1}  and  \cite {HT1} in the following:
\begin{align}\label{GM1}
 &\int_{R_1}^{R_2} R\int_{T_{R}} |t-t_0|  \left | \frac {\partial A}{\partial
t}-\frac {x_i-{x_0}_i} {2|t-t_0|} g^{i\a}F^{\a\b}dx_{\b} \right
|^2  \,\phi^2\, G\,dv\,dt\,dR\\
&+\int_{T_{R_1}(z_0)} R_1^2 \, |F_A|^2\,\phi^2 \,G_{z_0}\,dv\,dt\nonumber \\
\leq &   e^{C(R_2-R_1)}
\int_{T_{R_2}(z_0)} R_2^2 \, |F_A|^2\,\phi^2 \,G_{z_0}\,dvdt
 + C(R_2 -R_1)\text
{YM}(A_0)\nonumber
\end{align} for any two $0<R_1\leq R_2<\min\{i(M),\frac 1 2 \sqrt{t_0}\}$.

As a consequence of equation \eqref{GM1},  we   prove Theorem \ref{Theorem1.3'}
\begin{proof} [Proof of Theorem    \ref{Theorem1.3'}] Our proof is to modify the one on harmonic maps by Struwe \cite{St2}.
Suppose there exists a positive $R_0$  such that
\begin{equation}\label{5.17}
   \sup_{t\leq T_1, x_0\in M} \int_{B_{R_0}(x_0)} |F_A (x, t)|^{n/2}   dv< \varepsilon_1
\end{equation} for a sufficiently small $\varepsilon_1 > 0$.
Following from Lemmas \ref{Lemma 5}-\ref{Lemma 6}, $\nabla^lF_A$ for any integer $l\geq 0$ is uniformly bounded as $t \rightarrow T_1$, contradicting the hypothesis of $T_1$. Thus, there exist a sequence $R_k \to 0$, points $x_k\in M$ and $t_k<T_1$  with $t_k \rightarrow T_1$ such that
\begin{equation*}
     \varepsilon_1-\frac 1 k\leq    \int_{B_{R_k}(x_k)} |F_A (x, t_k)|^{n/2}   dv=\max_{0\leq t\leq t_k}  \max_{ y\in M}\int_{B_{R_k}(y)} |F_A (x, t)|^{n/2}   dv < \varepsilon_1 .
\end{equation*}
 Let
\begin{equation*}
 A_k(\tilde x,\tilde t) :=R_k A^i(x_k+R_k\tilde x, t_k +R_k^2\tilde t)d\tilde x_i.
\end{equation*}
Then $A_k(\tilde x, \tilde t)$ solves the Yang-Mills flow in $B_{\frac {R_0}{R_k}}(0)\times (-\frac{t_k}{R_k^2},0]$ and satisfies
 \begin{align}\label{5.3}
 \varepsilon_1-\frac 1 k &\leq    \int_{B_{1}(0)} |F_{A_k} (\tilde x, 0)|^{n/2}   d\tilde v\\
 &   =\sup_{  -\frac{t_k}{R_k^2}\leq t\leq  0}  \sup_{\tilde x_0\in B_{R_0/R_k}(0) } \int_{B_{1}(x_0)} |F_{A_k} (\tilde v, \tilde t)|^{n/2}   d\tilde v < \varepsilon_1 .\nonumber
\end{align}
According to  Lemma  \ref{Lemma 6}, there exists a uniform constant $C_l > 0$ independent of $k$ such that
\[\|\nabla^lF_{A_k}(\cdot, \tilde t)\|_{W^{1, 2}(B_1(\tilde x_0))}\leq C_l\]  for
 $\tilde t\in [-L, 0)$, for any integer $l\geq 0$, $x_0\in\R^n$ and $L>1$. Passing to a subsequence, we may assume that as $k\to \infty$, $A_k \to \bar{A}$ in $C^{\infty}_{loc}(\R^n\times (-\infty, 0) )$ up to gauge transformation. Using \eqref{5.3} and Lemma \ref{Lemma 3},     $\bar{A}$ is a nontrivial  solution of the Yang-Mills flow in $\R^n \times (-\infty,0)$ such that
\begin{equation}\label{D5}
\sup_{  -\infty< t\leq   0}   \int_{B_{1}(0)} |F_{\bar A} (\tilde x, \tilde t)|^{n/2}   d\tilde v=\varepsilon_1.
\end{equation}
Let  $\phi$ be a  smooth cut-off function  with compact support in
$B_{i(M)}(x_0)$ such that $\phi =1$ in $B_{i(M)/2}(x_0)$. For each $L>1$,   we apply \eqref{GM1} to get
\begin{align}\label{GM2}
 &\int_{R_k}^{LR_k} R\int_{T_{R}} |t-t_k|  \left | \frac {\partial A}{\partial
t}-\frac {x_i-{x_k}_i} {2|t-t_k|} g^{i\a}F_A^{\a\b}dx_{\b} \right
|^2  \,\phi^2\, G_{z_k}\,dv\,dt\,dR\\
&+\int_{T_{R_k}(z_k)} R_k^2 \, |F_A|^2\,\phi^2 \,G_{z_k}\,dv\,dt\nonumber \\
\leq &   e^{R_k (L -1) }
\int_{T_{LR_k}(z_k)} L^2R_k^2 \, |F_A|^2\,\phi^2 \,G_{z_k}\,dvdt
 + CR_k (L -1)\text
{YM}(A_0).\nonumber
\end{align}
Then it follows from  \eqref{GM1} that
\[\lim_{k\to \infty}\int_{T_{LR_k}(z_k)} L^2R_k^2 \, |F_A|^2\,\phi^2 \,G_{z_k}\,dvdt= \lim_{k\to \infty}\int_{T_{R_k}(z_k)} R_k^2 \, |F_A|^2\,\phi^2 \,G_{z_k}\,dv\,dt.\]
Let  $\tilde \phi$ be any cut-off function in $C_0^{\infty}(\R^n)$. As $k\to\infty$, it  follows from \eqref{GM2} that
\begin{align*}
 &\quad\int^{L}_{1} \tilde R\int_{T_{\tilde R}} |\tilde t|  \left | \frac {\partial \bar A}{\partial
t}-\frac {\bar x_{\a}} {2|\tilde t|} F_{\bar A}^{\a\b}d\tilde x_{\b} \right
|^2  \, \tilde \phi^2G\,\, d\tilde v\,d\tilde t\,d\tilde R\\
&\leq \lim_{k\to \infty}\int^{L}_{1} \tilde R\int_{T_{\tilde R}} |\tilde t|  \left | \frac {\partial  A_k}{\partial
t}-\frac {\bar x_{\a}} {2|\tilde t|} F_{A_k}^{\a\b}d\tilde x_{\b} \right
|^2  \,  \tilde \phi^2 G\,\,d\tilde v\,d\tilde t\,d\tilde R\nonumber\\
 &=\lim_{k\to \infty}\int_{R_k}^{LR_k} R\int_{T_{R}} |t-t_k|  \left | \frac {\partial A}{\partial
t}-\frac {x_i-{x_k}_i} {2|t-t_k|} g^{i\a}F_A^{\a\b}dx_{\b} \right
|^2  \,\tilde \phi^2 (\frac {x-x_k}{R_k})\, G_{z_k}\,dv\,dt\,dR\nonumber\\
&=0.\nonumber
\end{align*}
Therefore,
 $\bar {A}(x,t)$ is a smooth solution of the Yang-Mills flow in $\R^n\times (-\infty , 0]$ satisfying
\begin{equation}
     \frac{\partial  \bar A  }{\partial \tilde t}-\frac{\tilde x_\alpha }{2|\tilde t|} F^{\alpha\beta}_{ \bar A }d\tilde x_{\beta}=0.
\end{equation}

\end{proof}

We recall an improved local  monotonicity formula in a small ball, which was stated in  \cite {HT1}  as follows.
\begin{thm}\label{Theorem 5.1} Let $A$ be a regular
solution of the heat flow equation (1.3). For some small $R_0>0$,
  let $\phi \in
C^{\infty}_0(B_{R_0}(x_0))$ be a cut-off function such that $\phi
\equiv 1$ on $B_{R_0/2}(x_0)$, $|\phi |\leq 1$ and $|\nabla\phi |\leq
\frac C {R_0}$ in $B_{R_0}(x_0) $.  For any $z_0=(x_0, t_0)\in M\times [0,T]$ and any $0 < R\leq R_0$, we define
$$\Phi (R;  A) := R^2 \int_{T_R(z_0)} |F_A|^2(x,t)\,\phi^2(x)\,G_{z_0}\,dv\,dt.$$
Then we have
\begin{align}\label{L5.1}
 \Phi (R_1;  A)\leq e^{C(R_2-R_1)}\Phi (R_2;  A)+ C(R_2 -R_1)\text
{YM}(A_0)+CR_0^{2-n}\int_{P_{R_0}(z_0)} |F_A|^2\,dv\,dt
\end{align}
 for $0<R_1\leq R_2\leq R_0$.
 \end{thm}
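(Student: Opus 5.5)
The approach is the Chen--Struwe/Hamilton computation: differentiate the weighted quantity $\Phi(R;A)$ in $R$ and treat the cut-off $\phi$ as a perturbation. Its errors are supported where $|\nabla\phi|\neq 0$, i.e.\ in the annulus $B_{R_0}(x_0)\setminus B_{R_0/2}(x_0)$, and that is what produces the extra term $CR_0^{2-n}\int_{P_{R_0}(z_0)}|F_A|^2$.

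\textbf{Step 1 (rescaling).} Translate so that $z_0=(0,0)$ and use normal coordinates on $B_{R_0}(x_0)$, so that $g_{ij}=\delta_{ij}+O(|x|^2)$. Substituting $x=R\tilde x$, $t=R^2\tilde t$ gives
\[
\Phi(R;A)=\int_{T_1}|F_{A_R}|^2\,\phi^2(R\tilde x)\,G\,d\tilde v\,d\tilde t ,
\]
where $A_R(\tilde x,\tilde t)=RA(R\tilde x,R^2\tilde t)$ and $G$ is now $R$-independent. Differentiating in $R$ and scaling back, $\frac{d}{dR}\Phi$ is expressed through $\frac{d}{dR}A_R$, which equals $R^{-1}\big(x_\alpha A_\alpha+2t\,\partial_tA\big)$ up to a gauge term. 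The gauge term is removed by working in exponential (radial) gauge $x\cdot A=0$; this is legitimate because $F$, $\Phi$ and the monotonicity quantity are all gauge invariant.

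\textbf{Step 2 (integration by parts).} Using $\frac{\partial}{\partial t}F=-D_AD_A^*F$, the Bianchi identity, $\nabla G=-\frac{x}{2|t|}G$ and $x_\alpha F_{\alpha\beta}=\frac{d}{dR}A_\beta$ (in radial gauge), integrate by parts in $x$ exactly as in \eqref{GM1}. This yields
\[
\frac{d}{dR}\Phi(R)\ \ge\ 2R\int_{T_R}|t|\,\Big|\partial_tA-\frac{x_\alpha}{2|t|}F_{\alpha\beta}dx_\beta\Big|^2\phi^2G
\;-\;C\,\Phi(R)\;-\;C\,\mathrm{YM}(A_0)\;-\;E(R).
\]
The $-C\Phi$ and $-C\,\mathrm{YM}(A_0)$ terms come from the metric errors and from $|G|\le C|t|^{-n/2}$ away from the diagonal. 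The term $E(R)$ collects everything containing $\nabla\phi$:
\[
|E(R)|\le CR\int_{T_R}|F_A|\,\Big(|t|\,|D_A^*F_A|+|x|\,|F_A|\Big)\,\phi\,|\nabla\phi|\,G .
\]

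\textbf{Step 3 (the cut-off error; main obstacle).} This is where the new term must appear. On $\operatorname{supp}\nabla\phi$ we have $|x|\ge R_0/2$ while $|t|\le 4R^2\le 4R_0^2$, so
\[
G\le C|t|^{-n/2}e^{-R_0^2/(16|t|)}\le CR_0^{-n}.
\]
The $|D_A^*F_A|$ piece is absorbed by Young's inequality into the good square term; to do this, rewrite $D_A^*F_A=-\partial_tA$ as the bracket $\big(\partial_tA-\frac{x}{2|t|}\lrcorner F\big)$ plus a multiple of $F$. What remains is bounded by
\[
CR\,R_0^{-1}\cdot R_0^{-n}\cdot R_0^2\int_{T_R\cap(B_{R_0}\times\mathbb R)}|F_A|^2 ,
\]
using $|x|\le R_0$, $|t|\le CR_0^2$ and $|\nabla\phi|\le C/R_0$. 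The delicate point is making the powers of $R_0$ close up: when this is integrated in $R$ over $[R_1,R_2]\subset(0,R_0]$, the total must be at most $CR_0^{2-n}\int_{P_{R_0}(z_0)}|F_A|^2$. This uses that the time slabs $T_R$, for $R\le R_0$, lie inside $[t_0-4R_0^2,t_0]$; I would take $P_{R_0}$ with that time extent, possibly enlarging the constant.

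\textbf{Step 4 (Gronwall).} The result is
\[
\frac{d}{dR}\Big(e^{CR}\Phi(R)\Big)\ \ge\ -C\,\mathrm{YM}(A_0)-\frac{C}{R_0}\,R_0^{2-n}\int_{P_{R_0}}|F_A|^2 .
\]
Integrating from $R_1$ to $R_2$, discarding the nonnegative square term and using $R_2-R_1\le R_0$ gives \eqref{L5.1}.
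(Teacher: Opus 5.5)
Your proposal is correct and follows the paper's argument: rescale, differentiate $\Phi$ in $R$, use the Bianchi identity and the flow equation to integrate by parts against $\phi^2G$ and complete the square, absorb the metric errors into $C\Phi+C\,\mathrm{YM}(A_0)$, bound the $\nabla\phi$ terms on $B_{R_0}(x_0)\setminus B_{R_0/2}(x_0)$ by $G\le CR_0^{-n}$, and apply Gronwall; your explicit Young absorption of the $\partial_tA\cdot F\cdot\nabla\phi$ cross term, and your remark that the slabs $T_R$ lie in $B_{R_0}(x_0)\times[t_0-4R_0^2,t_0]$ rather than in $P_{R_0}(z_0)$, are both points the paper's write-up passes over. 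One justification must be fixed: imposing $x\cdot A=0$ at every time is a time-dependent gauge change, so $\partial_tA$ picks up a term $D_As$ and your square is not invariant, but no gauge fixing is needed, because the pure-gauge part $D_A(x_\alpha A_\alpha)$ of $\frac{d}{dR}A_R$ drops out by $\langle F_A,D_AD_A\sigma\rangle=\langle F_A,[F_A,\sigma]\rangle=0$ (equivalently, write the square as $\bigl|D_A^*F_A+\frac{x_\alpha}{2|t|}F_{\alpha\beta}\,dx_\beta\bigr|^2$, which is pointwise gauge invariant).
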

 \begin{proof} The proof   was outlined in \cite{HT1}, but without detailed proof. For completeness we provide detail here.
  At $x_0 \in M$, we choose a normal coordinate satisfying
$$|g_{ij}(x)-\delta_{ij}|\leq C|x-x_0|^2, \quad \left |\frac {\partial
g_{ij}}{\partial x_k}\right |\leq C |x-x_0|.$$
 Since the Yang-Mills flow (\ref{YMF}) is invariant under
translation $x\mapsto x-x_0$, $t\mapsto t-t_0$, we may assume that
$z_0=(x_0,t_0)= (0,0)$. For simplicity we denote
$G_{(0,0)}(x,t)$ by $G(x,t)$.

Then by rescaling we have
$$  \tilde A(\tilde
x) =RA_i(R\tilde x, R^2\tilde t)d\tilde x_i\, $$ where $x=R\tilde x$ and
$t=R^2\tilde t$. Subsequently,
$$|F_{\tilde A}|^2 (\tilde x, \tilde t)=R^4| F_A|^2(x,t)\,
.$$
Also, after the above rescaling, the quantity $\Phi$ becomes
 \begin{align}\label{L5.2}
\Phi (R; A)=\int_{T_1}| F_{\tilde A}|^2\, \phi (R\tilde
x)\,G(\tilde z)\sqrt { g (R\tilde x)}\,d\tilde z,
 \end{align}
where $d\tilde z=d\tilde v\,d\tilde t$.

Then we compute
  \begin{align}\label{L5}
&\frac d{dR} \Phi (R; A)=\int_{T_1}\frac d{dR}\left [ R^4  |F_A|^2
(R\tilde x, R^2\tilde t) \,\phi (R\tilde x)\, \sqrt {g
(R\tilde x)} \,\right ]  G(\tilde z) \,d\tilde z \\
&=R^3\int_{T_1}4 |F_A|^2 (R\tilde
x, R^2\tilde t) \,\phi^2 (R\tilde x) \,G(\tilde z)\,d\tilde z\nonumber\\
& +R^4\int_{T_1} \tilde x_k\frac{\partial }{\partial x_k}  |F_A|^2
(R\tilde x, R^2\tilde t) \,\phi^2(R\tilde x)\, G(\tilde
z)\,d\tilde v\,d\tilde t\nonumber\\
&+R^4\int_{T_1}2R\tilde t\frac{\partial }{\partial t} |F_A|^2 (R\tilde
x, R^2\tilde t)\, \phi^2(R\tilde x)\, \,G(\tilde
z)\,d\tilde v\,d\tilde t\nonumber\\
&+\int_{T_1} R^4  |F_A|^2 (R\tilde x, R^2\tilde t)\, \tilde
x_k\frac{\partial }{\partial x_k} \,(\phi^2  \sqrt {g}) (R\tilde
x)\, G(\tilde z)
\,d\tilde z \nonumber\\
&:= I_1+ I_2+I_3+I_4\,.\nonumber
 \end{align}

In local coordinates, we have
$$
|F_A|^2=\<F_A,\,F_A\> =\frac 1 4 \<F^{ij}dx_i\wedge dx_j,\,
F^{\a\b}dx_{\a}\wedge dx_{\b}\>\, .
$$
Let $\nabla_{A, x_k} F_A=\frac 12 \nabla_{A, x_k} F^{ij}
dx_i\wedge dx_j $ be the gauge-covariant derivative of $F_A$ with
respect to $\frac {\partial}{\partial x_k}$, where $\nabla_{A,
x_k} F^{ij}=\frac{\partial F^{ij}}{\partial x_k}+[A_k,
F^{ij}]-\sum_l\Gamma^l_{ik} F^{lj}-\sum_l\Gamma^l_{jk} F^{il}$. Then in
local coordinates, we have
$$D_A^*F_A=-\sum_{i,\a,\b}g^{i\a} \nabla_{A, x_i}F^{\a \b}dx_{\b}\,.$$
Since $A$ is compatible with the Riemannian structure, we obtain
$$\frac{\partial }{\partial x_k}|F_A|^2 =\frac 1 2
\<\nabla_{A,x_k}F^{ij}dx_i\wedge dx_j, F^{\a\b}dx_{\a}\wedge
dx_{\b}\>\,.$$ In local coordinates, the Bianchi identity is
$$D_AF_A=
\frac 12 \nabla_{A,x_k} F^{ij}dx_k\wedge dx_i\wedge dx_j=0\,,$$
which is equivalent to $\nabla_{A, x_k} F^{ij}=\nabla_{A,
x_i}F^{kj}-\nabla_{A, x_j} F^{ki}$. Then it follows from the
Bianchi  identity that
 \begin{align}\label{L5.4}
x_k\frac{\partial }{\partial x_k}|F_A|^2 &=\frac 1 2
x_k\<(\nabla_{A,x_i}F^{kj}-\nabla_{A,x_j}F^{ki})dx_i\wedge dx_j,\,
F^{\a\b}dx_{\a}\wedge dx_{\b}\>\\
&= \<\nabla_{A,x_i}(x_kF^{kj}) dx_i\wedge dx_j,\,
F^{\a\b}dx_{\a}\wedge
dx_{\b}\> -4|F_A|^2\nonumber\\
&\quad -\<x_kF^{lj}\Gamma_{ki}dx_i\wedge dx_j,\,
F^{\a\b}dx_{\a}\wedge dx_{\b}\>\,,\nonumber
  \end{align}
where $\nabla_{A, x_i} (x_kF^{kj}):=\frac{\partial }{\partial
x_i}(x_kF^{kj})+[A_i, x_k F^{kj}]-x_kF^{kl}\Gamma_{ji}^l$ is the
gauge-covariant derivative of $x_kF^{kj}$ with respect to $\frac
{\partial}{\partial x_i}$.

From Stokes' formula, we have
  \begin{align}
&\int_{T_{R}}\<\nabla_{A,x_i}(x_kF^{kj}) dx_i\wedge dx_j,\,
F^{\a\b}dx_{\a}\wedge dx_{\b}\>\,\phi^2\, G\,\sqrt g\,dz\\
&=2\int_{T_{R}}\<\nabla_{A,x_i}(x_kF^{kj})  dx_j,\,
g^{i\a}F^{\a\b}dx_{\b}\>\,\phi^2\, G\,\sqrt g\,dz\nonumber\\
&=-2\int_{T_{R}}\< x_kF^{kj}dx_j,\, g^{i\a}
\nabla_{A,x_i}F^{\a\b}dx_{\b} \>\,\phi^2 \,G\,\sqrt g\,dz\nonumber\\
&\quad -2\int_{T_{R}}\<x_kF^{kj}dx_j,\,
g^{i\a}F^{\a\b}dx_{\b}\>\,\phi^2\,\frac
{\partial G}{\partial x_i}\sqrt g\,dz\nonumber\\
&\quad -4\int_{T_{R}}\<x_kF^{kj}dx_j,\,
g^{i\a}F^{\a\b}dx_{\b}\>\,\phi\,\frac {\partial \phi }{\partial
x_i}\,G\,\sqrt g\,dz,\nonumber
  \end{align}
where we use the fact that
$$\frac 1{\sqrt g}\frac {\partial (g^{i\a}\sqrt g)}{\partial
x_i}F^{\a\b}=-g^{i\a}\Gamma_{l\a}^lF^{l\b}\,. $$ Using the above
identities, we have
  \begin{align}
  & \int_{T_{R}}x_k\frac{\partial }{\partial x_k}|F_A|^2\phi^2 G\sqrt g\,dz\\
&\geq \int_{T_{R}}2\(\<x_k F^{kj} dx_j,\, D_A^*F_A\>
-2|F_A|^2\)\, G\,\phi^2 \,\sqrt {g}\,dz \nonumber\\
&\quad -\int_{T_{R}}2\< x_kF^{kj}  dx_j,\, g^{i\a}F^{\a\b} dx_{\b}
\> \frac
{\partial G}{\partial x_i}\, \phi^2 \,\sqrt {g}\,dz\nonumber\\
&\quad  -C\int_{T_{R}} |F|^2(|x|^2 |\phi|^2+|x||\nabla\phi
|)\,G\,\sqrt {g}\,dz,\nonumber
   \end{align}
where we have used the fact that $|\Gamma_{ij}^k|\leq C|x|$.

Rescaling back to coordinates $(x,t)$, we have
  \begin{align}\label{L5.7}
I_1+I_2&=R\int_{T_R}[4 |F_A|^2+ x_k\frac {\partial}{\partial
x_k}  |F_A|^2 ](z)\,\phi^2 (x)\,G(z)\,\sqrt {g (x)}\,dz\\
&\geq 2R\int_{T_{R}} \<x_kF^{kj}dx_j, D_A^*
F_A\>\,\phi^2\,
G\,\sqrt g\,dz\nonumber\\
&\quad -2R\int_{T_{R}} \<x_kF^{kj}dx_j,\, g^{i\a}F^{\a\b}\>\frac
{\partial G}{\partial x_i}\,\phi^2\,\sqrt g\,dz\nonumber\\
&\quad -CR\int_{T_{R_1}} (|x|^2 \phi^2 +|x| |\nabla \phi
|)\,e(A)\,\phi^2\,G\,\sqrt g\,dz\,.\nonumber
   \end{align}

Applying \eqref{L5}  and  Stokes' formula, we deduce
  \begin{align}\label{L5.8}
  I_3&=2R\int_{T_{R}}t\frac {\partial}{\partial t}
|F_A|^2\,\phi^2\,G\,\sqrt g\, dv\,dt\\
&=4R\int_{T_R}t\<F_A,
\,D_A(\frac {\partial A}{\partial t})\> \,\phi^2\,G\,\sqrt g dv\,dt\nonumber\\
&=-4R\int_{T_{R}} t|\frac {\partial A}{\partial t}|^2
\,\phi^2\,G\,\sqrt g\,
dv\,dt\nonumber\\
&\quad-4R\int_{T_{R}}t \<\frac {\partial A}{\partial t},\,
g^{i\a}F^{\a\b}dx_{\b}\> \(\frac {\partial  G}{\partial
x_i}\phi^2+2\phi \frac {\partial \phi }{\partial x_i}G\)\,\sqrt
g\, dv\,dt.\nonumber
\end{align}

Note that
$$\frac {\partial G}{\partial x_i}=\frac {x_i}{2t}G.$$

Then from \eqref{L5}-\eqref{L5.8}, we get
  \begin{align}\label{L5.9}
& \frac d{dR}\Phi (R;A)  =I_1+I_2+I_3+I_4\\
&\geq  R\int_{T_{R}} |t|  \left | \frac {\partial A}{\partial
t}-\frac {x_i} {2|t|} g^{i\a}F^{\a\b}dv_{\b} \right
|^2  \,\phi^2\, G\,\sqrt g\,dv\,dt\nonumber\\
&\quad -C\int_{T_{R}}R |F_A|^2
(|x|^2\phi^2+ |x| |\nabla  \phi |) \,G\,\sqrt g\,d v\,d t\nonumber\\
&\quad -C\int_{T_{R}}R t^{-1} |F_A|^2\,\phi^2\,|
x|^4\,G\,\sqrt g \,d v\,d t,\nonumber
\end{align}
where $C > 0$ is a constant depending on the geometry of $M$. Note
 that
$$R^{-1} |x|^2G\phi^2\leq C(1+G)\phi^2, \quad R^{-1} |t|^{-1}|x|^4G\phi^2\leq
C(1+ G)\phi^2\quad \text {on }T_{R}.$$ Furthermore,  we have $\nabla \phi =0$
for $x\in B_{\frac {R_0}2 }$ and
$$G \leq c\frac
1{|t|^{n/2}}e^{-\frac {R_0^2}{16 t}}\quad \text { in
} B_{R_0}\backslash B_{\frac {R_0}2 }$$
implying
$$|x| |\nabla \phi |  G\leq \frac {C} {R^n}e^{-\frac {R_0^2}{R^2}}\leq \frac {C} {R_0^n}\frac {R_0^n} {R^n}e^{-\frac {R_0^2}{R^2}}\leq \frac {C} {R_0^n}\ \quad\mbox{in } T_R$$ for any $R\leq R_0$.

Combining these with Lemma \ref{Lemma 1}, we obtain
 \begin{align}
\frac d{dR}\Phi (R; A)\geq& -C \Phi (R; u, A)-CR\text {\text {YM}}
(A_0)\\
&-C\frac {R} {R_0^n} \int_{P_{R_0}(z_0)} |F_A|^2 dv\,dt.\nonumber
 \end{align}
The claim then follows from integrating the above inequality with respect to $R$.
 \end{proof}

We now recall the well-known Bochner inequality.

\begin{lem} \label{L5.3}Let $A$ be a regular solution of the Yang-Mills flow (\ref{YMF}) in $M\times [0,T]$.  Then
 \begin{align}
 &\(\frac{\partial }{\partial t} - \Delta_M \)
       |F_A|^2   \leq
    C(|F_A| + |R_M|)  |F_A|^2,
 \end{align}
where $C > 0$ is a constant independent of the connection $A$ and  $\Delta_M=\frac 1 {\sqrt
g}\frac {\partial}{\partial x_j} (g^{ij}\frac {\partial }{\partial
x_i})$ is the Laplace-Beltrami operator on $M$
 \end{lem}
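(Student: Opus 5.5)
The plan is to compute $\partial_t |F_A|^2$ directly from the evolution equation \eqref{W2} and compare it with $\Delta_M |F_A|^2$, which we expand through the covariant derivative $\nabla_A$.

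First, since $A$ is compatible with the metric on $\text{Ad}\,E$, we have pointwise
\[
\frac{\partial}{\partial t}|F_A|^2 = 2\left\langle \frac{\partial F_A}{\partial t}, F_A\right\rangle
= -2\langle \nabla_A^*\nabla_A F_A, F_A\rangle + 2\langle F_A\#F_A + R_M\#F_A, F_A\rangle .
\]
This uses \eqref{W2}, i.e.\ the Weitzenb\"ock formula \eqref{WZ} combined with the Bianchi identity $D_AF_A=0$.

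Second, metric compatibility again gives
\[
\Delta_M |F_A|^2 = 2\langle -\nabla_A^*\nabla_A F_A, F_A\rangle + 2|\nabla_A F_A|^2,
\]
with the sign convention that $\nabla_A^*\nabla_A$ is nonnegative and $\Delta_M$ is the analyst's Laplace--Beltrami operator. This identity is checked in normal coordinates at a point. The Christoffel symbol terms cancel because both sides are tensorial; alternatively one notes that $\Delta_M\langle\phi,\phi\rangle = -2\langle\nabla_A^*\nabla_A\phi,\phi\rangle + 2|\nabla_A\phi|^2$ holds for any section $\phi$.

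Third, subtracting the two identities gives
\[
\Big(\frac{\partial}{\partial t}-\Delta_M\Big)|F_A|^2 = -2|\nabla_A F_A|^2 + 2\langle F_A\#F_A + R_M\#F_A, F_A\rangle \le C(|F_A|+|R_M|)|F_A|^2 .
\]
Here we drop the nonpositive gradient term and bound the multilinear terms pointwise by $|F_A|^3$ and $|R_M|\,|F_A|^2$. The constant $C$ depends only on the bilinear structure of the bracket and on $n$, not on $A$.

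The only delicate point is bookkeeping of signs and conventions between $\nabla_A^*\nabla_A$, $D_AD_A^*$ and $\Delta_M$. This is already fixed by \eqref{Wz}–\eqref{WZ}, so I expect no real obstacle; the lemma is a routine consequence of \eqref{W2}.
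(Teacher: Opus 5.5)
Your argument is correct. It is the standard Bochner computation from \eqref{W2}: metric compatibility of $\nabla_A$ gives $\Delta_M|F_A|^2=-2\langle\nabla_A^*\nabla_AF_A,F_A\rangle+2|\nabla_AF_A|^2$, and you then discard the term $-2|\nabla_AF_A|^2$. The paper gives no proof of its own and simply quotes this lemma as well known, so your derivation is exactly the argument it takes for granted (and, keeping the $-2|\nabla_AF_A|^2$ term, the form it later reuses in \eqref{H-Mo}).
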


As application of the  local  monotonicity, we have an improved $\varepsilon$-regularity theorem as in \cite{HT1}:

 \begin {thm}\label{Theorem 6} Suppose that $A$ is a solution of the heat
flow equation (1.3) in $M\times [0, \infty )$ with initial connection
$A_0$. If there exists a constant $0 < \varepsilon_0<i(M)$ such
that
$$R^{2-n}\int_{P_R(x_0, t_0)}|F_A|^2\,dv\,dt
    \leq \varepsilon_0$$ for  some $0<R<\min \{\varepsilon_0, \frac
{t_0^{1/2}}2\}$,
  then we have
$$\sup_{P_{\delta R}(x_0,t_0)} |F_A|^2 \leq C(\delta R)^{-4}\,,$$
where $C > 0$ is a constant depending on $M$, $\delta >0$, $\text
{YM}(A_0)$ and $R$.
\end{thm}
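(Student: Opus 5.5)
We follow the standard Schoen-type argument for the heat flow of harmonic maps, as adapted by Chen--Struwe and Hong--Tian, using Theorem~\ref{Theorem 5.1} as the main input together with the Bochner inequality of Lemma~\ref{L5.3}.

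\emph{Step 1: small energy at every smaller scale.} First we show that the smallness hypothesis at scale $R$ propagates to all points $z_1=(x_1,t_1)\in P_{R/2}(x_0,t_0)$ and all smaller scales $r\le R/4$. Concretely, we aim for
\[
r^{2-n}\int_{P_r(z_1)}|F_A|^2\,dv\,dt\le C\big(\varepsilon_0+ R\,\mathrm{YM}(A_0)\big),
\]
which is small once $\varepsilon_0$ is small, since $R<\varepsilon_0$.

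The route to this bound goes through the monotone quantity $\Phi$ of Theorem~\ref{Theorem 5.1}, applied at $z_1$ with cutoff radius $R_0$ replaced by $R/2$.
\begin{itemize}
\item On $P_r(z_1)$ the kernel satisfies $G_{z_1}\ge c\,r^{-n}$ after a time shift, so $r^{2-n}\int_{P_r(z_1)}|F_A|^2$ is controlled by $\Phi$ at scale comparable to $r$ centred at $(x_1,t_1+r^2)$.
\item By \eqref{L5.1}, $\Phi(r)\le e^{CR}\Phi(R/4)+CR\,\mathrm{YM}(A_0)+CR^{2-n}\int_{P_{R}(z_0)}|F_A|^2$.
\item $\Phi(R/4)$ at $z_1$ is bounded by $CR^{2-n}\int_{P_R(z_0)}|F_A|^2$ plus an exponentially small tail $Ce^{-c}\,\mathrm{YM}(A_0)R^{2}\cdot R^{-n}$. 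The kernel factor $e^{-|x|^2/4|t|}$ is at most $e^{-cR^2/R^2}$ only away from $B_{R/2}$, which does not give smallness on its own; this is why the cutoff is placed at $R/2$ and the time window $T_{R/4}$ is used, so that the support stays inside $P_R(z_0)$.
\end{itemize}

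The technical point of this step is to handle the portion of $T_{R/4}(z_1)$ outside $P_R(z_0)$ in time. Since $T_{R/4}$ occupies $t\in(t_1-R^2/4,\,t_1-R^2/16)\subset(t_0-R^2,t_0)$, this portion is empty, so the whole of $\Phi(R/4)$ is controlled by the hypothesis. I expect this bookkeeping, choosing the scales so that everything lands in $P_R(z_0)$, to be the main obstacle.

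\emph{Step 2: Schoen's trick.} Set $f(\sigma)=(\tfrac R2-\sigma)^4\sup_{P_\sigma(z_0)}|F_A|^2$ for $0\le\sigma\le R/2$. This is continuous and vanishes at $\sigma=R/2$, so it attains its maximum at some $\sigma_0$, with $|F_A|^2(z_1)=\sup_{P_{\sigma_0}}|F_A|^2=:e_0$. Put $\rho=\tfrac12(\tfrac R2-\sigma_0)$. Maximality gives $\sup_{P_\rho(z_1)}|F_A|^2\le 16e_0$.

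If $\rho^4e_0\le 1$, then $f(\sigma_0)\le 16$, so $f(\delta R)\le 16$, which yields the stated bound with a constant depending on $\delta$.

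Otherwise rescale by $r_0=e_0^{-1/4}<\rho$: put $\tilde A(x,t)=r_0A(x_1+r_0x,\,t_1+r_0^2t)$ on $P_1$. Then $|F_{\tilde A}|^2\le 16$ and $|F_{\tilde A}|^2(0)=1$. Lemma~\ref{L5.3} gives $(\partial_t-\Delta)|F_{\tilde A}|^2\le C|F_{\tilde A}|^2$, the curvature of $M$ scaling away. The parabolic mean value inequality then gives
\[
1\le C\int_{P_1}|F_{\tilde A}|^2=C\,r_0^{2-n}\int_{P_{r_0}(z_1)}|F_A|^2\le C\big(\varepsilon_0+R\,\mathrm{YM}(A_0)\big)
\]
by Step 1. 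For $\varepsilon_0$ small this is a contradiction, and the claim follows.
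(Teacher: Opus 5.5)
Your proposal is correct and follows essentially the same route as the paper: Schoen's maximization trick, rescaling by $e_0^{-1/4}$, then the Bochner inequality of Lemma~\ref{L5.3} with Moser's parabolic mean value inequality to get $1\le C r_0^{2-n}\int_{P_{r_0}(z_1)}|F_A|^2$, and finally Theorem~\ref{Theorem 5.1} at a time-shifted centre to show this scaled energy is at most $C(\varepsilon_0+R\,\mathrm{YM}(A_0))$. The differences are only bookkeeping. You run Schoen's trick on $[0,R/2]$ rather than $[0,\delta R]$, and you isolate the monotonicity step as a uniform small-energy bound over centres $z_1\in P_{R/2}(x_0,t_0)$ and scales $r\le R/4$ (centre $(x_1,t_1+r^2)$, $R_2=R/4$, cutoff supported in $B_{R/2}(x_1)$), which makes explicit the off-centre bookkeeping that the paper's choice $(x_0,t_0+2\sigma^2)$, $R_2=R/2$ leaves implicit; your remark about an ``exponentially small tail'' is unnecessary, since with that cutoff no such term arises.
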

  \begin{proof} This result was outlined in \cite{HT1}. Here, we would like to give a more detailed proof.
 Without loss of generality, assume that
$(x_0,t_0)=(0,0)$, $r_1=\delta R$, $\delta\in ]0,1/2]$. As in
\cite{Schoen}  and  \cite {St2}, we choose $r_0<r_1$ such that
 \begin{align} \label{5.18}
(r_1-r_0)^4\sup_{{\bar P}_{r_0}}|F_A|^2  =
   \max_{0\leq r\leq r_1}\left [ (r_1-r)^4\sup_{P_r} |F_A|^2\right ]  \end{align}
   and find $(x_1,t_1)\in P_{r_0}$ such that
$$e_0:=|F_A|^2 (x_1,t_1)=\sup_{{\bar P}_{r_0}} |F_A|^2 \,.$$
 We claim that
$$e_0\leq 16 (r_1-r_0)^{-4}\,.$$
Otherwise, we have
$$\rho_0=e_0^{-1/4}\leq \frac{r_1-r_0}2\,.$$
By rescaling we have
$$B(\tilde x)=\rho_0\,A(x_0+\rho_0\tilde x, t_0+\rho_0^2\tilde t)\,.$$
Then
$$|F_B|^2=\rho_0^4\,|F_A|^2.$$
Setting
$$
e_{\rho_0}(B)=| F_B|^2=\rho_0^{4}\,|F_A|^2,$$ we thus have
\begin{align} \label{5.19}
1=e_{\rho_0}(0,0)&\leq \sup_{\bar P_1}e_{\rho_0}(B)
  = \rho_0^4\sup_{P_{\rho_0 (x_0,t_0)}}   |F_A|^2  \\
& \leq \rho_0^4\sup_{P_{\frac{r_1+r_0}2}}  |F_A|^2  \nonumber\\
& \leq \rho_0^4\(\frac{r_1 - r_0}{2}\)^{-4}\(r_1 - r_0\)^4e_0
     = 16\,,\nonumber\end{align}
     where the last step derives from \eqref{5.18}.
This implies that
$$|F_B|^2\leq 16\quad \mbox{on } \bar P_1\,.$$
Let $\tilde M :=\{ \tilde x: x_1+\rho_0\tilde x\in M\}$
  and $\tilde P_1 :=\{ (\tilde x,\tilde t): \quad (\tilde x,\tilde
t)\in B_1(0)\times [-1,1]\}$.

Following from Lemma \ref{L5.3}, we have
\begin{align} \label{?}
(\frac {\partial}{\partial \tilde  t} - \Delta_{\tilde M} )
 |F_B|^2 &=\rho_0^6 \,(\frac {\partial}{\partial  t} -
\Delta_{M} ) |F_A|^2\\
   &\leq C (|F_B|+\rho_0^2)|F_B|^2 \leq C e_{\rho_0},
\text { in } \tilde P_1\,,\nonumber \end{align}
where the constant $C > 0$
depends on $i(M)$ and  $\sup_{x\in M} |R_M|$.
Then Moser's parabolic Harnack inequality yields
\begin{align} \label{5.21}
1=e_{\rho_0}(0,0)&\leq C\int_{\tilde P_1}e_{\rho_0}(B)\,d\tilde v\,d\tilde t
\leq C\rho_0^{-n}\int_{P_{\rho_0}} \rho_0^2  |F_A|^2 \,dv\,dt\,.
\end{align}
Let $\phi \in C^{\infty}_0(B_{\frac 34}R(x_0))$ be a cut-off
function such that $\phi \equiv 1$ on $B_{R/2}$, $|\phi |\leq 1$
and $|\nabla\phi |\leq \frac C R$ in $B_{\frac 34 R}\backslash
B_{R/2}$.

For $r, \sigma\in (0, r_1)$ and any $z_0=(x_0,t_0)\in P_{r_1}$ and taking $R_1=\sigma$,  $R_2=\frac 12 R$ in Theorem \ref{Theorem 5.1}, we
obtain
\begin{align} \label{5.22}
&\sigma^{2-n}\int_{P_{\sigma}(z_0)} |F_A|^2\,dv\,dt \leq C\int_{T_{\sigma }(t_0+2\sigma^2)} \sigma^2 |F_A|^2\,
G_{(x_0, t_0+2\sigma
^2)}\,\phi^2\,dv\,dt \\
&\leq C\int_{T_{\frac  12 R}(t_0+2\sigma^2)}R^2 |F_A|^2\, G_{(x_0, t_0+2\sigma^2)}\,\phi^2\,dv\,dt\nonumber\\
&\quad +CR\, \text {YM} (A_0) +CR^{2-n}\int_{P_{\frac 34R}(x_0, t_0+2\sigma^2 )}|F_A|^2\,dv\,dt\nonumber\\
&\leq C\int_{P_R(z_0)} R^{2-n} |F_A|^2\,dv\,dt +CR\, \text
{\text {YM}} (A_0)\leq C\varepsilon_0 \,,\nonumber
\end{align}
where we note there exists a constant $C > 0$  for $t_0+2\sigma^2 -R^2\leq t\leq
t_0+2\sigma^2-\frac {R^2}2$, such that
$$G_{(x_0,t_0+2\sigma^2)} =  \frac 1{(4\pi (t_0+2\sigma^2-t))^{n/2}}\text{ exp} \left (-\frac
{(x-x_0)^2}{4(t_0+2\sigma^2-t)}\right ) \leq CR^{-n}.
$$

 Taking $\sigma =\rho_0$ in
 in \eqref{5.22} and letting $\varepsilon_0 > 0$  be
sufficiently small, we obtain a contradiction of \eqref{5.21}. Therefore, we have
$$e_0\leq \frac {16}{(r_1-r_0)^4},$$
which implies
$$\sup_{P_{r_1/2}}|F_A|^2
    \leq 16 \,(r_1)^{-4}(r_1-r_0)^4e_0\leq 256\, (r_1)^{-4}\,.$$
This proves Theorem  \ref{Theorem 6}.
\end{proof}

We can now provide a proof of Theorem  \ref{Theorem 1.3}.

\begin{proof}[Proof of Theorem  \ref{Theorem 1.3}]

  Let  $T_1$ be the maximal existence time and finite in Theorem \ref{Theorem 1.2}  satisfying \eqref{Cri} and $T_1\geq C i(M)^2$. Choose $R_0= \min\{i(M), \varepsilon_0\}$, then according to Lemma \ref{Lemma 1},  H\"older's inequality and choosing a sufficiently small $\varepsilon_2 > 0$,  we have
 $$R_0^{4-n}\int_{B_{R_0}(x_0)}|F_{A(t)}|^2\,dv\leq  R_0^{4-n}\int_{M}|F_{A_0}|^2\,dv\leq C \varepsilon_0^{4-n}\varepsilon_2^{4/n}<\varepsilon_0 $$ for  any $t<T_1$.  Following from Theorem  \ref{Theorem 6} and Lemma \ref{Lemma 6}, it is apparent that $T_1$ is not the maximal existence time. This concludes the proof of Theorem  \ref{Theorem 1.3}.
\end{proof}

Finally,  we improve a result of Hong-Tian \cite{HT1}, by proving Theorem  \ref{Theorem 1.4}.

\begin{proof}[Proof of Theorem   \ref{Theorem 1.4}]
Let $t_k\to \infty$ be any sequence.
From Lemma \ref{Lemma 1},  we know that $\int_0^{\infty }\int_M
 |\partial_tA|^2  \,dvdt$ is finite. Then for any  sequence  of $t_k\to\infty $, we have
 $$\int_{t_k-1}^{t_k}\int_M \left|\frac {\partial A}{\partial t}\right|^2\,dv dt\to 0.
 $$
  According to the mean value formula, there exists $\tilde t_k\in [t_k-1, t_k]$ such that
  \begin{align} \label{tilde}\int_M \left|\frac {\partial A}{\partial t}\right|^2(\cdot, \tilde t_k)\,dv \to 0.
 \end{align}
For the sequence of $\{A(t_k)\}$, we  define
$$\Sigma \{A(t_k)\} :=\bigcap_{\varepsilon_0>r>0} \left\{x\in M:\quad
\liminf_{k\to\infty}\, r^{4-n}\int_{B_r(x)}   |F_A|^2  (\cdot
,t_k)|^2\,dv\geq \varepsilon_3\right \}\,,$$ where
$\varepsilon_3 > 0$ is determined in the sequel.

 Applying H\"older's inequality, we obtain
  \begin{align}
  &\int_M | A(t_k)-A(\tilde t_k)|^2\,dv =\int_M \left |\int_{\tilde t_k}^{t_k} \frac {\partial A}{\partial t}\,dt \right |^2\,dv\\
  &\leq C\left (\int_{t_k-1}^{t_k}\int_M \left|\frac {\partial A}{\partial t}\right|^2\,dv dt\right )\to 0.\nonumber
 \end{align}
 which implies that $A(t_k)$ and $A(\tilde t_k)$ have the same limit $A_{\infty}$ in $L^2(M)$.

Let $\phi$ be a cut-off function in $M$. Then it follows from a local energy identity that
 \begin{align} \label{Loc1}
\left |\int_M (|F_A (\cdot ,s)|^2-|F_A(\cdot , \tau ) |^2 )\phi^2 \,dv\right |\leq
C\int_{t_k-1}^{t_k}\int_M |\partial_t
A|^2+ |\partial_t A| |F_A||\nabla \phi |\,dv\,dt.
 \end{align}
for two $s,\tau\in [t_k-1,t_k]$. Subsequently, we have
 $\Sigma \{A(t_k)\}= \Sigma \{A(\tilde t_k)\} =:\Sigma$.

Then, let $\mu$ be
the limit Radon measure of $\mu_k=|F_{A(t_k)}|^2dv$. Since $\int_M|F_{A(t)}|^2dv$ is non-decreasing in $t$, it follows from
\eqref{Loc1} that for
any continuous function $\phi$ with compact support in $X$, we have
$$\mu (\phi)=\lim_{k\to\infty} \mu_k (\phi)=\lim_{k\to\infty}\int_M \phi  |F_{A(t_k)}|^2\,dv=\lim_{k\to\infty}\int_M \phi  |F_{A(\tilde t_k)}|^2\,dv .
$$
 Following from \cite{T} and using a lemma of \cite{HT1},  $e^{ar^2} r^{4-n} \mu (B(x))$ for $x\in M$  is
nondecreasing in $r$ according to \eqref{tilde}. Then the density $$\Theta (\mu , x)=\lim_{r\to 0}
r^{4-n} \mu (B_r(x))
$$
exists for every $x\in X$ and $x\in  \Sigma
(\{A_{t_k}\}_{k=1}^{\infty})$ if and only if $\Theta (\mu , x)\geq
\varepsilon_4>0$.

Now, suppose $x_1\in M\backslash\Sigma $, then there exists a $r_1$
such that, for sufficiently large $t_k$, we have
$$
r_1^{4-n}\int_{B_{r_1}(x_1)}|F_{A(t_k)}|^2\,dv<\varepsilon_4\,.$$

Letting $s=t_{k}-r_1^2$ and $\tau =t_k$ in
\eqref{Loc1}, we have
 	\begin{align*}
&r_1^{2-n}\int_{P_{\frac 12 r_1}(x_1, t_k)} |F_{A}|^2
\,dvdt\\
&\leq C r_1^{4-n}\int_{B_{r_1}}  |F_{A}(\cdot ,t_k )|^2 \,dv
 +C\,r_1^{2-n} \int_{t_k-1}^{t_k} \int_M |\partial_t A|^2 \,dv\,dt\\
&\quad +C\,r_1^{2-n} \text {YMH} (A_0, u_0)^{1/2}\left ( \int_{t_k-1}^{t_k} \int_M |\partial_t A|^2 \,dv\,dt\right )^{1/2} \\
&\leq 2C \varepsilon_4\leq \varepsilon_0\,
 	\end{align*}
for a sufficiently small $\varepsilon_4 > 0$, where $\varepsilon_0 > 0$ is the constant provided by Theorem \ref{Theorem 6}.
According to Theorem  \ref{Theorem 6}, we have
$$ |F_A(x ,t_k )|^2  \leq C( \delta r_1)^{-4}\,$$
uniformly on parabolic cylinders $P_{\frac 12\delta r_1}(x_1, t_k)$.

 For $x_0\in M\backslash \Sigma$, there
exists a small $R_0$ such that
$$|F_{A}(x ,t_k) |^2\leq C\quad \text {in } P_{R_0}(x_0, t_{k})$$
for sufficiently large $t_{k}$.
By induction, we assume that
$$|\nabla^j_A F_A (x ,t_k)|^2 \leq C \quad  \mbox {in } P_{R_j}(x_0, t_k)$$
for $j=0,..., l-1$. Using a similar proof to that of Lemma \ref{L5.3},
we deduce that
 	\begin{align} \label{H-Mo}
(\frac {\partial }{\partial t}-\Delta_M)  |\nabla_A^j F_A|^2
+2 |\nabla_A^{j+1} F_A|^2 ) \leq C
 (|\nabla_A^j F_A|^2+1)
 	\end{align}
in $P_{R_j}(x_0, t_k)$ with  small $R_j$ and $j=1,2, ... ,l$.
From \eqref{H-Mo} with $j=1,... ,l-1$, we obtain
$$\int_{P_{R_j}} |\nabla_A^j F_A(x ,t_k) |^2dv\leq C$$
for $j=0,..., l$. Using \eqref{H-Mo} with $j=l$, Moser's parabolic
estimate yields
$$|\nabla_{A}^lF_A(x ,t_k)|^2 \leq  C\quad \text{in }P_{\delta R}\,,$$
for some $\delta >0$.

Following from Uhlenbeck's Theorem in \cite {U2}, there exists
a subsequence $\{k'\}$ of  $\{k\}$ and gauge transformations
$\sigma (k')$, such that $\sigma (k')A(t_{k'})$ converges to a  Yang-Mills connection $A_{\infty}$ weakly in the
$H^{1,2}$-topology on any compact subset outside $\Sigma$ such that
$A$ is a solution of the Yang-Mills equation
outside $\Sigma$. Furthermore, according to a result in \cite {DK}, we
know that for every integer $l\geq 1$, $\|\nabla_{A}^l (\sigma (k')
A(t_{k'}))\|_{W^{l,2} B_{\frac 12 \delta R} (x_0)}$     are
also uniformly bounded. Subsequently,  $\sigma (k')A(t_{k'})$ converges to $A_{\infty}$ in $C^{\infty}_{loc}
(M\backslash \Sigma )$.
Then, for any $x_1\in
M\backslash\Sigma$, there exists a $r_2$ such that $B_{
r_2}(x_1)\subset M\backslash \Sigma$, which implies that the singular
set $\Sigma$ is closed in $M$.

We claim that $\mathcal H^{n-4}(\Sigma )\leq CYM(A_0$). For a given
$\delta >0$, we have a finite collection  $J$ of balls
$\{B_{r_i}(x_i)\}$, $r_i\leq \delta $, such that
$\{B_{r_i}(x_i)\}_{i\in J} $  is a cover of $\Sigma$ and
$B_{r_i/2}(x_i)\cap B_{r_j/2}(x_j)=\emptyset $ for $i\neq j$ with
$i,j\in J$ due to the fact that $\Sigma$ is closed.
Note that $e^{ar^2} r^{4-n} \mu (B_r(x_i))$ is
nondecreasing in $r$. Then  for $x_i\in\Sigma$ with $i\in J$,   we have
\begin{align*}
\varepsilon_1&\leq r_i^{4-n} \mu (B_r(x_i))=\lim_{k\to\infty} r_i^{4-n}\int_{B_{r_i}(x_i) } |F_A(\tilde t_k)|^2\,dv\\
&\leq C\lim_{k\to\infty} \delta^{4-n} \int_{B_{\delta}(x_i)}  |F_A(\tilde t_k)|^2\,dv.
\end{align*}
Then for
sufficiently large $t_k$ and summing on $i\in J$, we  get
\begin{align*}\sum_i r_i^{n-4}\leq \frac C {\varepsilon_1} \sum_i \frac { r_i^{n-4}} {\delta^{n-4}}\int_{B_{\delta}(x_i)}  |F_A(\tilde t_k)|^2\,dv \leq C_1\,\mbox{YM} (A_0; M)\,.\end{align*}
 It follows that
\begin{align*}
\mathcal H^{n-4}(\Sigma )\leq  C_1\,\mbox{YM} (A_0; M).	\end{align*}

 Applying Fatou's lemma, there exists a nonnegative Radon measure $\nu$ on
$M$ such that
$$\mu =|F_{A_{\infty}}|^2dv +\nu .
$$
Then it follows from \cite {T} that $\nu(x)=\Theta (\mu, x)
H^{n-4}\llcorner \Sigma $ for
$H^{n-4}$-a.e. $x\in\Sigma$, and according to \cite{T} and \cite{HT1} it follows that $\Sigma$ is $(n-4)$-rectifiable, as required.
\end{proof}

\medskip

\noindent{\bf Data availability.} No data  were generated or analyzed during the current study.
No conflict of interest for this paper.

\end{document}